\newtheorem{theorem}{Theorem}[section]
\newtheorem{lemma}[theorem]{Lemma}
\newtheorem{proposition}[theorem]{Proposition}
\newtheorem{conjecture}[theorem]{Conjecture}
\newenvironment{customprop}[1]
  {\innercustomprop}
  {\endinnercustomprop}
  \theoremstyle{theorem} \newtheorem*{mainth}{Main Theorem}
\theoremstyle{definition}
\newtheorem*{question*}{Question}
\theoremstyle{remark}
\newtheorem{remark}[theorem]{Remark}
\newtheoremstyle{cited}{}{}{\itshape}{}{\bfseries}{\bfseries .}{ }{\thmname{#1} \thmnumber{#2} \thmnote{\normalfont#3}}
\theoremstyle{cited}
\newcommand{\fa}{\mathfrak{a}}
\newcommand{\fm}{\mathfrak{m}}
\newcommand{\Af}{\mathbb{A}}
\newcommand\NN{\protect\mathbf{N}}
\newcommand\QQ{\protect\mathbf{Q}}
\newcommand\RR{\protect\mathbf{R}}
\newcommand\ZZ{\protect\mathbf{Z}}
\newcommand\PP{\protect\mathbb{P}}
\newcommand\sA{\mathscr{A}}
\newcommand\cA{\mathcal{A}}
\newcommand\cI{\mathcal{I}}
\newcommand\cJ{\mathcal{J}}
\newcommand\cK{\mathcal{K}}
\newcommand\cO{\mathcal{O}}
\DeclareMathOperator{\Fitt}{Fitt}
\DeclareMathOperator{\hs}{e}
\DeclareMathOperator{\Jac}{Jac}
\DeclareMathOperator\Int{Int}
\DeclareMathOperator\init{in}
\DeclareMathOperator\lct{lct}
\DeclareMathOperator\ord{ord}
\DeclareMathOperator\red{red}
\DeclareMathOperator\Sing{Sing}
\DeclareMathOperator{\Spec}{Spec}
\DeclareMathOperator\Supp{Supp}
\DeclareMathOperator{\toric}{toric}
\DeclareMathOperator{\Val}{Val}
\DeclareMathOperator{\Vol}{Vol}
\DeclareMathOperator\Cosupp{Cosupp}
\DeclareMathOperator\Exc{Exc}
\newcommand\la{\lambda}
\newcommand\fb{\mathfrak{b}}
\newcommand\ab{\mathfrak{a}_{\bullet}}
\newcommand\fp{\mathfrak{p}}
\DeclareMathOperator{\length}{length}
\DeclareMathOperator\vol{vol}
\DeclareMathOperator\nvol{\widehat{\vol}}
\begin{document}
\title[Existence of Valuations with Smallest Normalized Volume]{Existence of Valuations with Smallest Normalized Volume}

\author{Harold Blum}
\thanks{This material is based upon work supported by the National Science
Foundation under Grant No.\ DMS-0943832}
\keywords{singularities, valuations, volumes}
\subjclass[2010]{Primary 14B05; Secondary 12J20}
\address{Department of Mathematics, University of Michigan, Ann Arbor, MI
48109-1043, USA}
\email{\href{mailto:blum@umich.edu}{blum@umich.edu}}
\urladdr{\url{http://www-personal.umich.edu/~blum/}}

\makeatletter
  \hypersetup{pdfsubject=\@subjclass,pdfkeywords=\@keywords}
\makeatother

\begin{abstract}
  Li introduced the normalized volume of a valuation due to its relation to K-semistability. He conjectured that over a klt singularity there exists a valuation with smallest normalized volume. We prove this conjecture and give an explicit example to show that such a valuation need not be divisorial. 
\end{abstract}

\maketitle

\section{Introduction}\label{s:intro}

 Fix a variety $X$ of dimension $n$ and $x\in X$ a closed point. 
 Let $\Val_{X,x}$ denote the set of real valuations on $X$ with center equal to $x$. 
An element of $\Val_{X,x}$ is an $\RR$-valued valuation of the function field $K(X)$ that takes nonnegative values on $\cO_{X,x} \subseteq K(X)$ and strictly positive values on the maximal ideal of $\cO_{X,x}$. For examples,  \emph{divisorial valuations} centered at $x$ form an important class inside $\Val_{X,x}$. These valuations are determined by the order of vanishing along a prime divisor $E\subset Y$ where $Y$ is normal and  there is a proper birational morphism $f:Y \to X$ contracting $E$ to $x$. We denote such a valuation by $\ord_E \in \Val_{X,x}$. 

Li introduced the \emph{normalized volume} function\[
\nvol_{X,x} : \Val_{X,x} \longrightarrow \RR_{>0} \cup \{ +\infty\}\]
that sends a valuation $v$ to its normalized volume, denoted $\nvol(v)$ \cite{Li1}. To define the normalized volume, we recall the following. Given a valuation $v\in \Val_{X,x}$, we have valuation ideals
 \[
 \fa_m(v)_x \coloneqq  
 \{ f\in \cO_{X,x} \, \vert \, v(f) \geq m \}  \subseteq \cO_{X,x}
 \]
 for all positive integers $m$. The volume of $v$ is given by
 \[
\vol(v) \coloneqq  \underset{m\to \infty}{\limsup} \,\,\frac{ \length( \cO_{X,x}/ \fa_m(v)_x) }{m^n}
.\]
The normalized volume of $v$ is
\[
\nvol(v) \coloneqq  A_X(v)^n \vol(v),
\,\]
where $A_X(v)$ is the log discrepancy of $v$ (See Section \ref{disc}).
When $X$ has klt singularities, $A_X(v) >0$, and, thus, $\nvol(v)>0$ for all $v\in \Val_{X,x}$. Li conjectured the following. 

\begin{conjecture}[\cite{Li1}]\label{liconj}
If $X$ has klt singularities at $x$, there exists a valuation $v^\ast\in \Val_{X,x}$ that minimizes $\nvol_{X,x}$. 
Furthermore, such a minimizer $v^\ast$ is unique (up to scaling) and quasimonomial. 
\end{conjecture}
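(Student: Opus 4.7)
The plan is to prove existence by a direct method: extract a minimizing sequence of valuations, pass to a limit at the level of graded sequences of ideals, and then realize the limit by an actual valuation. Because $\vol(\lambda v)=\lambda^{-n}\vol(v)$ and $A_X(\lambda v)=\lambda A_X(v)$, the normalized volume is scale-invariant, so one may normalize $A_X(v)=1$. In the klt setting $A_X>0$ on $\Val_{X,x}$, and $\inf \nvol_{X,x}<+\infty$ (for example $\nvol(\ord_E)$ is finite for any divisor $E$ exceptional over $x$). Thus a minimizing sequence $\{v_i\}\subset \Val_{X,x}$ with $A_X(v_i)=1$ and $\vol(v_i)\to \inf \nvol_{X,x}$ exists.

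The compactness input is Izumi's inequality: there is a constant $C=C(X,x)$ so that every $v\in \Val_{X,x}$ with finite log discrepancy satisfies $v(f)\leq C\cdot A_X(v)\cdot \ord_x(f)$ for all $f\in \cO_{X,x}$. Applied to $\{v_i\}$, this yields $\mathfrak{m}_x^{\lceil Cm\rceil}\subseteq \fa_m(v_i)_x$ uniformly in $i$. Since $\cO_{X,x}/\mathfrak{m}_x^{\lceil Cm\rceil}$ has finite length, for each fixed $m$ only finitely many distinct ideals occur as $\fa_m(v_i)_x$. A diagonal extraction produces a subsequence along which $\fa_m(v_i)_x = \fa_m$ stabilizes for every $m$, giving a graded sequence $\fa_\bullet=\{\fa_m\}$ of $\mathfrak{m}_x$-primary ideals with $\vol(\fa_\bullet):=\lim_m \length(\cO_{X,x}/\fa_m)/m^n=\lim_i \vol(v_i)=\inf \nvol_{X,x}$.

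To extract a minimizing valuation from $\fa_\bullet$, set $\lct(\fa_\bullet):=\lim_m m\cdot \lct(\fa_m)$; the fact that $A_X(v_i)=1$ and $v_i(\fa_m)\geq m$ forces $\lct(\fa_m)\leq 1/m$, so $\lct(\fa_\bullet)\leq 1$. Then invoke a Jonsson--Mustaţă-type theorem to produce a quasi-monomial valuation $v^*\in \Val_{X,x}$ computing $\lct(\fa_\bullet)$, normalized so that $v^*(\fa_\bullet):=\lim_m v^*(\fa_m)/m=1$; this gives $A_X(v^*)=\lct(\fa_\bullet)\leq 1$, and an asymptotic comparison of valuation ideals (using $v^*(\fa_m)\gtrsim m$) yields $\vol(v^*)\leq \vol(\fa_\bullet)$. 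Combining, $\nvol(v^*)=A_X(v^*)^n\vol(v^*)\leq \lct(\fa_\bullet)^n\vol(\fa_\bullet)\leq \vol(\fa_\bullet)=\inf \nvol_{X,x}$, so $v^*$ realizes the infimum.

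The main obstacle is the production and control of $v^*$. For a single ideal, the log canonical threshold is computed by a divisorial valuation on a log resolution, but for a graded sequence one must take a limit of such computing valuations along a tower of models, obtaining only a quasi-monomial valuation in general. Verifying that this limit is centered at $x$, has finite log discrepancy exactly equal to $\lct(\fa_\bullet)$, and asymptotically dominates $\fa_\bullet$ requires uniform Izumi estimates together with a careful lower semicontinuity analysis for $A_X$ along the tower; this step, together with the justification of the equality $\vol(\fa_\bullet)=\lim_i \vol(v_i)$ at the limit, is the technical heart of the proof.
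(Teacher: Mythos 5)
Your overall architecture (minimizing sequence of valuations $\to$ limit graded sequence of ideals $\to$ valuation computing its log canonical threshold $\to$ minimizer via $\nvol(v^\ast)\leq \lct(\tilde\fa_\bullet)^n\hs(\tilde\fa_\bullet)$) is exactly the paper's, but the step where you actually produce the limit object contains a genuine error. You claim that because $\cO_{X,x}/\fm_x^{\lceil Cm\rceil}$ has finite length, only finitely many distinct ideals can occur as $\fa_m(v_i)_x$, and you then diagonalize. This is false: over an infinite (here uncountable) field, the set of ideals squeezed between $\fm_x^{N}$ and $\fm_x$ is infinite --- already the hyperplanes in $\fm_x/\fm_x^2$ give a positive-dimensional family. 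Finite length bounds the dimension of the ambient vector space, not the number of its subspaces, so no subsequence need stabilize and the diagonal extraction collapses. This is precisely the difficulty the paper's Section 5 is built to circumvent: the ideals with $\fm_x^m\subseteq\fa_m\subseteq\fm_x$ are parameterized by finite-dimensional affine spaces $H_d$, and one finds a ``very general'' point of $H=\varprojlim H_d$ using Noetherianity, constructibility (Lemma \ref{exist}, which needs uncountability of $k$), and the generic constancy of $\hs$ and $\lct$ in families (Appendix \ref{secmult}); the limit is not a literal stabilization but a graded sequence whose terms have the generic normalized multiplicity of the family.

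Two further points. First, your use of Izumi's inequality runs in the wrong direction: $v(f)\leq C\,A_X(v)\ord_x(f)$ bounds $v$ from above by $\ord_x$ and hence yields $\fa_m(v_i)\subseteq\fm_x^{\lfloor \delta m\rfloor}$ (the paper's Proposition \ref{convabove}); the inclusion $\fm_x^{rm}\subseteq\fa_m(v_i)$ you want instead requires a uniform \emph{lower} bound on $v_i(\fm_x)$, which the paper arranges by normalizing $v_i(\fm_x)=1$ (and which under your normalization $A_X(v_i)=1$ would need Li's estimate $A_X(v)\leq C\,v(\fm_x)\nvol(v)$). Second, the identity $\hs(\tilde\fa_\bullet)=\lim_i\vol(v_i)$ that you defer to ``the technical heart'' is not automatic even granting a limit sequence: one needs the convergence $\hs(\fa_m(v_i))/m^n\to\vol(v_i)$ to be uniform in $i$, which is the content of Proposition \ref{volumebound} and rests on extending the Ein--Lazarsfeld--Smith approximation of valuation ideals to klt varieties via subadditivity with the Jacobian ideal. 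Finally, note that quasimonomiality of a valuation computing $\lct(\tilde\fa_\bullet)$ is the open Jonsson--Musta\c{t}\u{a} conjecture, and uniqueness is not addressed at all; like the paper, your argument can only deliver existence.
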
 

The above conjecture holds when $x\in X$ is a smooth point. As observed in \cite{Li1}, if $x$ is a smooth point, then $\nvol_{X,x}$ is minimized at $\ord_x$, the valuation that measures order of vanishing at $x$. Thus,
\[
n^n = \nvol( \ord_x)  \leq \nvol(v)\]
for all $v\in \Val_{X,x}$.
The above observation follows from the work of 
de Fernex-Ein-Musta{\c{t}}{\u{a}}.
\begin{theorem}[\cite{DFEM}]
Let $X$ be a variety of dimension $n$ and  $x\in X$ a smooth point. If $\fa \subseteq \cO_{X,x}$ is an ideal that vanishes precisely at $x$, then
\[
n^n = \lct(\fm_x)^n \hs(\fm) \leq    \lct(\fa)^n \hs(\fa)
\]
where $\fm_x$ is the maximal ideal of $\cO_{X,x}$. 
\end{theorem}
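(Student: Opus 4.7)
The plan is to prove the inequality $\lct(\fa)^n \hs(\fa) \geq n^n$ by comparing the ideal $\fa$ to powers of the maximal ideal via multiplier ideals. Since $\fa$ vanishes precisely at $x$, it is $\fm_x$-primary, and both invariants depend only on the completion $\widehat{\cO_{X,x}} \cong k[[t_1,\dots,t_n]]$, so I would work in this regular local setting. For the base case $\fa = \fm_x$, the blowup of $x$ produces an exceptional divisor $E$ with $A_X(E) = n$ and $\ord_E(\fm_x) = 1$, which gives $\lct(\fm_x) \leq n$. The reverse inequality holds because any divisorial valuation $v$ centered at $x$ on the smooth variety satisfies $A_X(v) \geq n \cdot v(\fm_x)$, a direct consequence of monomializing $v$ in a regular system of parameters and applying AM--GM to the resulting toric data. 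Combined with $\hs(\fm_x) = 1$, the base case yields $\lct(\fm_x)^n \hs(\fm_x) = n^n$.

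For the main inequality, set $c = \lct(\fa)$. The statement is equivalent to the lower bound $\hs(\fa) \geq (n/c)^n$, and the plan is to obtain it through multiplier ideal theory. The key inputs are the explicit formula $\cJ(\fm_x^k) = \fm_x^{k-n+1}$ for $k \geq n$ in the regular local ring, the subadditivity $\cJ(\fa^{\lambda} \fb^{\mu}) \subseteq \cJ(\fa^\lambda) \cdot \cJ(\fb^\mu)$, and Skoda's theorem, which controls $\cJ(\fa^\lambda)$ in terms of integral closures of powers of $\fa$. Starting from $\cJ(\fa^{c - \epsilon}) = \cO_{X,x}$ for $0 < \epsilon \ll 1$ and combining these three ingredients, the goal is to derive asymptotic containments of the form
\[
\overline{\fa^m} \subseteq \fm_x^{\lfloor (n/c) m \rfloor - O(1)}
\]
valid for all sufficiently large $m$. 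Since integral closure preserves the Hilbert--Samuel multiplicity (Rees), taking colengths, dividing by $m^n$, and letting $m \to \infty$ then gives $\hs(\fa) \geq (n/c)^n$, using that $\length(\cO_{X,x}/\fm_x^N) \sim N^n/n!$.

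The main obstacle I expect is the quantitative bookkeeping in this multiplier ideal chain. The naive estimate $\lct(\fa) \leq n/\ord_{\fm_x}(\fa)$, which follows from $\fa \subseteq \fm_x^{\ord_{\fm_x}(\fa)}$, only produces $\hs(\fa) \geq \ord_{\fm_x}(\fa)^n$, and this is generally strictly weaker than $(n/c)^n$ when $\fa$ is far from being a power of $\fm_x$. Subadditivity is the tool that effectively upgrades the crude order-of-vanishing bound to the sharp log-canonical-threshold bound, and combining it with Skoda's theorem in the correct proportion to obtain exactly the constant $n^n$ (rather than some weaker $n$-dependent constant) is the delicate step. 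A useful sanity check throughout would be the monomial case $\fa = (t_1^{a_1},\dots,t_n^{a_n})$, where the desired inequality reduces to the AM--GM inequality $(\sum 1/a_i)^n \cdot (a_1 \cdots a_n) \geq n^n$, saturating precisely at $a_1 = \cdots = a_n$; the general proof should specialize to AM--GM in this case.
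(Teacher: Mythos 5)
The paper itself gives no proof of this theorem --- it is quoted from \cite{DFEM} in the introduction --- so I am measuring your proposal against the argument in that reference. Your base case is fine (the blowup of $x$ is already a log resolution of $\fm_x$, so no AM--GM is needed there). The proposal breaks at its central claim: the containment $\overline{\fa^m}\subseteq \fm_x^{\lfloor (n/c)m\rfloor-O(1)}$, where $c=\lct(\fa)$, is false. Take $n=2$ and $\fa=(t_1,t_2^N)\subseteq k[[t_1,t_2]]$ with $N\geq 2$. Then $c=1+1/N$, so $n/c=2N/(N+1)>3/2$, yet $t_1^m\in\fa^m\subseteq\overline{\fa^m}$ has $\fm_x$-adic order exactly $m$, so $\overline{\fa^m}\not\subseteq\fm_x^{\lceil 3m/2\rceil}$ once $m\gg0$. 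This is not a bookkeeping issue that a more careful combination of Skoda, subadditivity, and $\cJ(X,\fm_x^k)=\fm_x^{k-n+1}$ can repair: the strongest true statement of this shape is the chain $\overline{\fa^{m}}\subseteq\cJ(X,m\cdot\fa)\subseteq\cJ(X,c\cdot\fa)^{\lfloor m/c\rfloor}\subseteq\fm_x^{\lfloor m/c\rfloor}$, which yields only $\hs(\fa)\geq\lct(\fa)^{-n}$, short of the goal by exactly the factor $n^n$ you need to produce. The obstruction is structural rather than quantitative: powers of $\fm_x$ detect only the order of vanishing of $\fa$ at $x$, while the ideals that make the inequality interesting (e.g.\ $(t_1^{a_1},\dots,t_n^{a_n})$ with very unequal $a_i$) are anisotropic, so the sharp constant cannot be obtained by trapping $\overline{\fa^m}$ inside an isotropic ideal.

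The actual proof in \cite{DFEM} runs your ``sanity check'' in reverse: one degenerates $\fa$ to its initial monomial ideal with respect to a monomial order. This preserves the colength of every power, hence $\hs$, while $\lct$ can only decrease by semicontinuity, so it suffices to treat monomial ideals. For a monomial ideal, $\lct(\fa)=c$ means that $(1/c,\dots,1/c)$ lies on the boundary of the Newton polyhedron $P(\fa)$; a supporting hyperplane $\sum u_ix_i=1$ there satisfies $\sum u_i=c$, the simplex $\{x\geq 0:\sum u_ix_i\leq 1\}$ of volume $1/(n!\,u_1\cdots u_n)$ is contained in the complement of $P(\fa)$, and AM--GM gives $u_1\cdots u_n\leq(c/n)^n$, whence $\hs(\fa)=n!\,\Vol\bigl(\RR^n_{\geq0}\setminus P(\fa)\bigr)\geq(n/c)^n$. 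So the monomial computation is not a consistency check but the entire content once the degeneration is in place; that degeneration is precisely the mechanism this paper adapts to the toric setting in Section \ref{sectoric} (Propositions \ref{deformlct} and \ref{deg}).
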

The authors of the previous theorem were motivated by their interest in singularity theory, as well as applications to birational rigidity  \cite{DFEMR} \cite{DFEM} \cite{DF}. Li's interest in volume minimization stems from questions concerning K-semistability of Fano varities. Let $V$ be a smooth Fano variety and $C(V,-K_V): = \Spec( \oplus_{m\geq 0} H^0(V, -mK_V))$ the affine cone over $V$ with cone point $0 \in C(V,-K_V)$.  The blowup of $C(V,-K_V)$ at $0$ has a unique exceptional divisor, which we denote by $\tilde{V}$. 
 
\begin{theorem}[\cite{Li1} \cite{LiLiu}]
Let $V$ be a smooth Fano variety. The following are equivalent:
\begin{enumerate}
\item The Fano variety $V$ is $K$-semistable. 
\item The function $\nvol_{C,0}$ is minimized at $\ord_{\tilde{V}}$.
\end{enumerate}
\end{theorem}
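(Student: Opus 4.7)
The plan is to transport both conditions to a common framework---$\mathbb{G}_m$-invariant valuations on the affine cone $X = C(V, -K_V)$---and to match the resulting condition with Fujita--Li's valuative criterion for K-semistability.

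I would begin by reducing the minimization problem to $\mathbb{G}_m$-invariant valuations. Since $X$ carries a natural $\mathbb{G}_m$-action with unique fixed point $0$, any $v \in \Val_{X,0}$ degenerates along this action to a $\mathbb{G}_m$-invariant valuation $v_0$, and the aim is to show $\nvol(v_0) \leq \nvol(v)$. One needs upper semicontinuity of $\vol$ and lower semicontinuity of $A_X$ along the resulting flat family of rescaled valuations; while the former follows from standard facts about graded sequences of ideals, the latter is the main technical obstacle.

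Next, I would parametrize the $\mathbb{G}_m$-invariant valuations. Under the grading $\cO_{X,0} = \bigoplus_{m \geq 0} H^0(V, -mK_V)$, each such valuation has the form $v_{w,c}(f) = \min_m \bigl( mc + w(f_m) \bigr)$ for a valuation $w$ on $K(V)$ and a weight $c > 0$, with $\ord_{\tilde V} = v_{0,1}$. A direct calculation using an equivariant log resolution of the cone (for $A_X$) and an Okounkov-body / filtered linear series argument (for $\vol$) expresses $\nvol(v_{w,c})$ in terms of $A_V(w)$, $c$, and the Fujita--Odaka--Li $S$-invariant $S_{-K_V}(w)$. Optimizing over $c > 0$ for fixed $w$ and clearing denominators, the inequality $\nvol(v_{w,c}) \geq \nvol(\ord_{\tilde V})$ for all $(w,c)$ becomes equivalent to $A_V(w) \geq S_{-K_V}(w)$ for all valuations $w$ on $V$.

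Finally, I would invoke the Fujita--Li valuative criterion: $V$ is K-semistable if and only if $A_V(\ord_E) \geq S_{-K_V}(\ord_E)$ for every prime divisor $E$ over $V$, and divisorial valuations suffice by density. Combining with the previous step gives the equivalence. The hardest step is the initial reduction to $\mathbb{G}_m$-invariant valuations; a more robust approach, closer in spirit to Li--Liu, is to establish the equivalence first for $\mathbb{G}_m$-invariant divisorial valuations via the direct correspondence with special test configurations on $V$ (where $\nvol$ matches the generalized Futaki invariant), and then extend to arbitrary valuations using density of quasi-monomial valuations and semicontinuity of $\nvol$---the existence of a minimizer, proven in the present paper, makes this extension unambiguous.
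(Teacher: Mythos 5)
The paper does not prove this statement: it appears in the introduction as a cited result, attributed to \cite{Li1} and \cite{LiLiu}, and is used only as motivation for Conjecture \ref{liconj}. So there is no internal proof to compare against; the relevant arguments are in Li's and Li--Liu's papers. Your sketch is a reasonable reconstruction of the \emph{modern} route to this theorem (parametrize $\GG_m$-invariant valuations on the cone as $v_{w,c}$, compute $\nvol(v_{w,c})$ via the Okounkov-body/filtration formalism, optimize over $c$, and match the result with the valuative criterion $A_V \geq S_{-K_V}$), which is genuinely different from the original arguments: Li \cite{Li1} proved that minimization at $\ord_{\tilde V}$ implies K-semistability by testing against special test configurations, and Li--Liu \cite{LiLiu} proved the converse by associating to a valuation a filtration of the anticanonical ring and invoking Fujita's characterization of Ding-semistability for filtrations, not the $A \geq S$ inequality (which was not yet available for arbitrary valuations at the time). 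Your route is cleaner but leans on later technology.

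Two steps in your proposal are genuine gaps as written. First, the reduction to $\GG_m$-invariant valuations by degenerating $v$ to $v_0$ and claiming $\nvol(v_0) \leq \nvol(v)$ is a substantial theorem in its own right (Li's ``equivariant volume minimization'' and, in full generality, later work of Li--Xu and Xu--Zhuang); you correctly flag it as the main obstacle but offer no argument, and the honest proof of the cited theorem avoids it entirely by working with the filtration attached to an arbitrary $v$ rather than degenerating $v$ itself. Second, your fallback in the last paragraph appeals to ``semicontinuity of $\nvol$'' to pass from divisorial to arbitrary valuations; this is precisely what the present paper warns against --- its footnote in Section \ref{s:intro} points out that $\vol$, and hence $\nvol$, is \emph{not} known to be lower semicontinuous on $\Val_{X,x}$ (citing \cite{FJ}), and the existence of a minimizer established in this paper does not supply the semicontinuity you would need there. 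To close that extension step you would instead use the approximation $\nvol$-inequalities of Lemma \ref{tech} together with the fact that $\lct$ of a graded sequence is computed by some valuation, i.e.\ the ideal-theoretic dictionary of Proposition \ref{relate}, rather than any topological limit argument on $\Val_{X,x}$.
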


Thus, if $V$ is K-semistable, there exists a valuation centered at $0\in C(V,-K_V)$ with smallest normalized volume. If $V$ is not K-semistable, Conjecture \ref{liconj}  implies the existence of such a valuation. We prove the following.

\begin{mainth}\label{a}
If $x\in X$ is a closed point on a klt variety, then there exists a valuation $v^\ast \in \Val_{X,x}$ that is a minimizer of $\nvol_{X,x}$.
\end{mainth}

In practice, it is rather difficult to pinpoint such a valuation $v^\ast$ satisfying the conclusion of this theorem.
For a good source of computable examples, we consider the toric setting.  
\begin{theorem}\label{b}
If  $X$ is a klt toric variety with $x\in X$ a torus invariant point, then  
\[
\inf_{v\in \Val_{X,x}^{\toric}} \nvol(v) = 
\inf_{v\in \Val_{X,x}}\nvol(v),\]
where $\Val_{X,x}^{\toric}$ denotes the set of toric valuations of $X$ with center equal to $x$. 
\end{theorem}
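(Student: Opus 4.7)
The inequality $\inf_{v \in \Val_{X,x}^{\toric}} \nvol(v) \geq \inf_{v \in \Val_{X,x}} \nvol(v)$ is trivial, so the plan is to prove the reverse: given $v \in \Val_{X,x}$ with $\nvol(v) < \infty$, exhibit $w \in \Val_{X,x}^{\toric}$ with $\nvol(w) \leq \nvol(v)$. After replacing $X$ by an affine toric open neighborhood of $x$, I may assume $X = \Spec R$ is affine and the big torus $T$ acts on $R$.

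The starting point is the reformulation (due to Li--Liu) of the normalized volume in terms of the graded sequence of valuation ideals,
\[
\nvol(v) = \lct\bigl(\fa_\bullet(v)\bigr)^n \cdot \hs\bigl(\fa_\bullet(v)\bigr),
\]
where $\lct(\fa_\bullet) := \lim_m m \,\lct(\fa_m)$ and $\hs(\fa_\bullet) := \lim_m \hs(\fa_m)/m^n$. The strategy is to degenerate $\fa_\bullet(v)$ to a graded sequence of monomial ideals using $T$, and then extract a toric valuation from the result.

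For a sufficiently generic one-parameter subgroup $\lambda : \GG_m \to T$, the initial-ideal construction produces, for each $m$, a flat family over $\bA^1$ carrying $\fa_m(v)$ to its $T$-invariant (hence monomial) initial ideal $\init_\lambda \fa_m(v)$. Flatness preserves colength, so $\hs$ is unchanged, while lower semicontinuity of log canonical thresholds in flat families gives $\lct(\init_\lambda \fa_m(v)) \leq \lct(\fa_m(v))$. Passing to the graded-sequence invariants, the monomial graded sequence $\fb_\bullet := \init_\lambda \fa_\bullet(v)$ satisfies $\hs(\fb_\bullet) = \hs(\fa_\bullet(v))$ and $\lct(\fb_\bullet) \leq \lct(\fa_\bullet(v))$, so $\lct(\fb_\bullet)^n \hs(\fb_\bullet) \leq \nvol(v)$. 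The Newton-polyhedron description of log canonical thresholds on a toric variety now supplies a toric valuation $w \in \Val_{X,x}^{\toric}$ computing $A_X(w) = \lct(\fb_\bullet) \cdot w(\fb_\bullet)$, where $w(\fb_\bullet) := \lim_m w(\fb_m)/m$. Using the containments $\fb_m \subseteq \fa_{w(\fb_m)}(w)$ together with the definition of $\vol(w)$ yields $\vol(w) \leq \hs(\fb_\bullet)/w(\fb_\bullet)^n$, whence
\[
\nvol(w) = A_X(w)^n \vol(w) \leq \lct(\fb_\bullet)^n \hs(\fb_\bullet) \leq \nvol(v).
\]

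The main obstacle is checking that the torus degeneration commutes with the passage to graded-sequence invariants. For $\hs$ this follows from flatness combined with Fekete-type subadditivity of $\hs(\fa_m)$. For $\lct$ the pointwise inequality $\lct(\init_\lambda \fa_m) \leq \lct(\fa_m)$ must assemble into an inequality of the rescaled limits $m \lct(\cdot)$, which requires the lct of a graded sequence to be correctly identified as the limit of the single-ideal lcts (a result of Jonsson--Mustață type). A secondary subtlety is the toric realization of $\lct(\fb_\bullet)$ by a single toric valuation: while $\lct$ of each monomial $\fb_m$ is computed by some toric $w_m$ (which may depend on $m$), one must extract a single limit $w$ in the cone of toric valuations, handled by compactness on the relevant cross-section and continuity of the objective functional on that cone.
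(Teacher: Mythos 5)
Your proposal is correct and follows essentially the same route as the paper: degenerate the graded sequence of valuation ideals to monomial initial ideals, observe that colengths are preserved under the Gr\"obner degeneration while the log canonical threshold can only drop (by lower semicontinuity in the flat family), and then apply Liu's lemma (Lemma \ref{tech}) to a toric valuation computing the resulting log canonical threshold. The only place you take a slightly longer path is at the end: the paper truncates the monomial graded sequence to a single $\fm_x$-primary monomial ideal via Lemma \ref{details} (using $\lct(\ab)^n\hs(\ab)=\lim_m\lct(\fa_m)^n\hs(\fa_m)$), so the toric lct-computing valuation comes for free from a toric log resolution, whereas your compactness argument on a cross-section of $\sigma$ must additionally rule out the limit point escaping to the boundary (i.e.\ the center of $w$ failing to be $x$) --- a point your $\fb_m\subseteq\fm_x^{\lfloor\delta m\rfloor}$-type bounds can handle but which you should make explicit.
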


In Section \ref{example}, we look at a concrete example, the cone over $\PP^2$ blown up at a point. In this example, we find a quasimonomial valuation that minimizes the normalized volume function. Additionally, we show that there does not exist a divisorial volume minimizer.
 While this example is not new, it is unique in that we use entirely algebraic methods. As explained in \cite[Example 6.2]{LiXu}, examples from Sasakian geometry with irregular Sasaki-Einstein metrics will provide similar examples. Our example was looked at in \cite[Section 7]{MDS}.
\\

\noindent {\bf Sketch of the proof of the main theorem}.

In order to prove the \hyperref[a]{Main Theorem} we first take a sequence of valuations $\{v_i\}_{i\in \NN}$ such that 
\[
\lim_{i \to \infty} \nvol(v_i) = \inf_{v\in \Val_{X,x}} \nvol(v).\]
Ideally, we would would like to find a valuation $v^\ast$ that is a limit point of the collect $\{v_i \}_{i \in \NN}$  and then argue that $v^\ast$ is a minimizer of $\nvol_{X,x}$. To proceed with such an argument, one would likely need to show that $\nvol_{X,x}$ is a lower semicontinuous function on $\Val_{X,x}$. It is unclear how to prove such a statement\footnote{Li showed that if $\nvol_{X,x}$ is lower semicontinuous on $\Val_{X,x}$, then there is a minimizer of  $\nvol_{X,x}$ \cite[Corollary 3.5]{Li1}. Note that $\nvol_{X,x}(v)\coloneqq  A_X(v)^n \vol(v)$ is a product of two functions. While $A_X$ is lower semicontinuous on $\Val_{X,x}$, $\vol$ fails to be lower semicontinuous in general \cite[Proposition 3.31]{FJ}. }.

We proceed by shifting our focus. Instead of studying valuations $v\in \Val_{X,x}$, we may consider ideals $\fa \subseteq \cO_X$ that are $\fm_x$-primary. For an $\fm_x$-primary ideal, the \emph{normalized multiplicity} of $\fa$ is given by
$\lct(\fa)^n \hs(\fa)$, where \[
\lct(\fa) \coloneqq  \min_{v\in \Val_{X,x}} \frac{A_X(v)}{v(\fa)} \text{ and }
\hs(\fa)\coloneqq 
\lim_{m\to \infty} \frac{ \length (\cO_{X}/ \fa^m)}{m^n /n!},
\]
where the above invariants are the \emph{log canonical threshold} and \emph{Hilbert-Samuel multiplicity}.  

We can also define similar invariants for graded sequnces of $\fm_x$-primary ideals. Note that a graded sequence of ideals on $X$ is a sequence of ideals $\fa_\bullet= \{ \fa_m\}_{m\in \NN}$ such that $\fa_{m} \cdot \fa_n \subseteq \fa_{m+n}$ for all $m,n\in \NN$. The following proposition relates minimizing the normalized volume function to minimizing the normalized multiplicity.
\begin{customprop}{\ref{relate}}[\cite{Liu}]
If $x\in X$ is a closed point on a klt variety, then
\begin{equation}\label{infequate}
\inf_{v\in \Val_{X,x}} \nvol(v)
= 
\inf_{\ab \, \fm_x \text{-primary}   } \lct(\ab)^n \hs(\ab)
=
\inf_{\fa\, \fm_x \text{-primary}  }\lct(\fa)^n \hs(\fa).
\end{equation}
\end{customprop}

While our goal is to find $v^\ast \in \Val_{X,x}$ that achieves the first infimum of Equation \ref{infequate}, we will instead find a graded sequence of $\fm_x$-primary ideals $\tilde{\fa}_\bullet$ that achieves the second infimum of the equation. After having constructed such a graded sequence $\tilde{\fa}_\bullet$, a valuation $v^\ast$ that computes $\lct(\tilde{\fa}_\bullet)$ (see Section \ref{lct}) will be a minimizer of $\nvol_{X,x}$.

To construct such a graded sequence,  we will  take our previously mentioned sequence of valuations $\{v_i \}_{i \in \NN}$. 
This gives us a collection of graded sequences of ideals $\{ \ab(v_i) \}_{i \in \NN}$. Our goal will be to find a graded sequence $\tilde{\fa}_\bullet$ that is a ``limit point'' of the previous collection. 

We recall the work of  de Fernex-Musta{\c{t}}{\u{a}}  \cite{FM},  Koll{\'a}r \cite{Which}, and de Fernex-Ein-Musta{\c{t}}{\u{a}} \cite{Smooth} \cite{Bound} on \emph{generic limits}. Given a collection of ideals  $\{ \fa_i \}_{i \in \NN}$ where $\fa_i \subset k[x_1,\ldots, x_r]$, there exists a field extension $k\subseteq K$ and an ideal $\tilde{\fa} \subset K[[x_1,\ldots,x_r]]$ that encodes information on infinitely many members of $\{ \fa_i \}_{i \in \NN}$.  We extend previous work on generic limits to find a ``limit point'' of a collection of graded sequences of ideals. 

Along the way, we will need a technical result on the rate of convergence of  $\{ \hs(\fa_m(v)) \}_m$ for a valuation $v\in \Val_{X,x}$. To perform this task, we extend the work of Ein-Lazarsfeld-Smith on approximation of valuation ideals \cite{ELS} and prove a technical, but also surprising, uniform convergence type result for the volume function.

\begin{customprop}{\ref{volumebound}}
Let  $X$ be a klt variety of dimension $n$ and $x\in X$ a closed point. For $\epsilon>0$ and constants $B,E,r \in \ZZ_{>0}$, there exists $N=N(\epsilon, B,E,r)\in \ZZ_{>0}$ such that for every valuation $v\in \Val_{X,x}$ with  $\vol(v)\leq  B$, $A_X(v) \leq E$, and $v(\fm_x)\geq \frac{1}{r}$, we have
\[
\vol(v) \leq
\frac{
\hs(\fa_m(v)) }{m^n} < \vol(v)+ \epsilon .
\text{ for all } m\geq N.\]
\end{customprop}

\noindent{\bf Structure of the Paper:} In Section \ref{prelim} we provide preliminary information on valuations, graded sequences of ideals, and log canonical thresholds. Section \ref{secels} extends \cite{ELS} to klt varieties and gives a proof of the previous proposition on the volume of a valuation. Section \ref{normal} provides information on Li's normalized volume function. Section \ref{seclimits} extends the theory of generic limits from ideals to graded sequences of ideals. Section \ref{secmain} provides a proof of the \hyperref[a]{Main Theorem}.  In \ref{seclog}, we explain that the arguments in this paper extend to the setting of log pairs. Lastly, Section \ref{sectoric} provides a proof of Theorem \ref{b} and a computation of an example of a non-divisorial volume minimizer. 

The paper also has two appendices that collect known statements that do not  explicitly appear in the literature. Appendix \ref{secmult} provides information on the behavior of the Hilbert-Samuel multiplicity and log canonical threshold in families. Appendix \ref{appcomputing} provides a proof of the existence of valuations computing log canonical thresholds on klt varieties.
\\

\noindent{\bf Acknowledgements:}
I would like to thank my advisor Mircea Musta{\c{t}}{\u{a}} for guiding my research and sharing his ideas with me. I also thank Mattias Jonsson, Chi Li, Yuchen Liu, and Chenyang Xu for comments on a previous draft of this paper. Lastly, I am grateful to David Stapleton for producing the graphic in Section \ref{example}.

\section{Preliminaries}\label{prelim}

\noindent {\bf Conventions}:
For the purpose of this paper, a variety is an irreducible, reduced, seperated scheme of finite type over a field $k$. Furthermore, we will always assume that $k$ is of characteristic $0$, algebraically closed, and uncountable. We use the convention that $\NN= \{1,2,3,\ldots\}$.

\subsection{Real Valuations}  Let $X$ be a variety and $K(X)$ denote its function field. A \emph{real valuation} of $K(X)$ is non-trivial group homomorphism
\[
v: K(X)^\times \to \RR
\]
such that  $v$ is trivial on $k$ (the base field) and
$v(f+g) \geq \min \{ v(f),v(g) \}$. We set $v(0)=+\infty$.
A valuation $v$ gives rise to a valuation ring $\cO_v \subset K(X)$, where $\cO_v \coloneqq  \{ f \in K(X)\vert v(f) \geq 0\}$. 
Note that if $v$ is a valuation of $K(X)$ and $\la\in \RR_{>0}$, scaling the outputs of  $v$ by $\la$ gives a new valuation $\la \cdot  v$.

We say that $v$ has \emph{center on} $X$ if there exists a map $\pi :\Spec( \cO_v)\to X$ as below
\[
\begin{tikzcd}
\Spec(K(X)) \arrow{r}  \arrow{d} & X  \arrow{d} \\
\Spec(\cO_v) \arrow{r} \arrow[dotted]{ur}{\pi} & \Spec(k)
\end{tikzcd}
.\]
By 
 \cite[Theorem II.4.3]{Har}, if such a map $\pi$ exists, it is necessarily unique.
Let $\zeta$ denote the unique closed point of $\Spec(\cO_v)$. If such a $\pi$ exists, we define the \emph{center} of $v$ on $X$, denoted $c_X(v)$, to be $\pi(\zeta)$. 
We let $\Val_X$ (resp., $\Val_{X,x}$)  denote the set of real valuations of $K(X)$ with center on $X$ (resp., center equal to $x$). 

Given a valuation $v\in \Val_X$ and a nonzero ideal $\fa\subseteq \cO_X$, we may evaluate $\fa$ along $v$ by setting 
\[
v(\fa) \coloneqq  \min \{ v(f) \vert f \in \fa \cdot \cO_{X,c_X(v)}\}
.\]
In the case when $X$ is affine, the above definition can be made simpler. In this case, 
\[
v(\fa) = \min\{ v(f) \vert f\in \fa (X)\}.\]
It follows from the above definition that if $\fa\subseteq \fb \subset \cO_X$ are nonzero ideals, then $v(\fa)\geq v(\fb)$. Additionally, $v(\fa) >0$ if and only if $c_x(v) \in \Cosupp(\fa)$.\footnote{ The \emph{cosupport} of an ideal $\fa \subseteq \cO_X$ is defined as $\Cosupp(\fa) \coloneqq  \Supp( \cO_X/\fa )$.}

We endow $\Val_X$ with the weakest topology such that, for every ideal $\fa$ on $X$, the map 
$\Val_X \to \RR \cup \{+\infty \}$ defined by $v\mapsto v(\fa)$ is continuous. For information on the space of valuations, see \cite{JM} and \cite{BFFU}.

\subsection{Divisorial Valuations}
Let $ E\subset Y\overset{f}{\to} X$ be a proper birational morphism, $Y$ a normal variety, and $E$ a prime divisor on $Y$. The discrete valuation ring $\cO_{Y,E}$ gives rise to a valuation $\ord_E \in \Val_X$ that sends $a \in K(X)^\times$ to the order of vanishing of $a$ along $E$. 
Note that $\ord_E \in \Val_X$ and $c_X( \ord_E)$ is the generic point of $f(E)$. 

We say that $v\in \Val_X$ is a \emph{divisorial valuation} if there exists $E$ as above and $\la\in \RR_{> 0}$ such that $v= \la \ord_E$. Divisiorial valuations are the most ``geometric'' valuations. 

\subsection{Quasimonomial Valuations}
 A \emph{quasimonomial valuation} is a valuation that becomes monomial on some birational model over $X$. Specifically, let $f:Y\to X$ be a proper birational morphism and $p \in Y$ a  closed point such that $Y$ is regular at $p$. Given a system of parameters $y_1,\ldots,y_n \in \cO_{Y, p}$ at $p$ and ${\bf \alpha} = (\alpha_1,\ldots, \alpha_n) \in \RR_{\geq 0 }^n \setminus \{ {\bf 0} \}$, we define a valuation $v_{\bf \alpha}$  as follows. 
 For $r \in \cO_{Y,p}$ we can write $r$ in $\widehat{ \cO_{Y,p}}$ as 
 $r= \sum_{\beta \in \ZZ_{\geq 0}^n } c_\beta y^\beta$, with $c_\beta \in \widehat{ \cO_{Y,p}}$
 either zero or unit. We set 
 \[
 v_{\alpha}(r) = \min \{ \langle \alpha, \beta \rangle \vert c_\beta \neq 0 \}. \]
 
 A quasimonomial valuation is a valuation that can be written in the above form. Note that in the above example, if there exists $\lambda \in \RR_{>0}$ such that $\lambda \cdot {\bf \alpha} \in \ZZ_{\geq 0}^r$, then $v_\alpha$ is a divisorial valuation. Indeed,  take a weighted blowup of $Y$ at $p$ to find the correct exceptional divisor.

\subsection{The Relative Canonical Divisor}
Let $Y\to X$ be a proper birational morphism of normal varieties. If $X$ is $\QQ$-Gorenstein, that is $K_X$ is $\QQ$-Cartier, we define the \emph{relative canonical divisor} of $f$ to be
\[
K_{Y/X} = K_{Y} - f^\ast(K_X),\]
where $K_Y$ and $K_X$ are chosen so that $f_\ast K_Y = K_X$. While $K_{Y}$ and $K_X$ are defined up to linear equivalence, $K_{Y/X}$ is a well-defined divisor. 

We say that $X$ is a \emph{klt} variety if $X$ is normal, $\QQ$-Gorenstein, and for any projective birational morphism of normal varieties $Y \to X$ the coefficients of $K_{Y/X}$ are $>-1$. Moreover, it is sufficient to check this condition on a resolution of singularities $Y\to X$ such that the exceptional locus on $Y$ is a simple normal crossing divisor. 
For further details on klt singularites and the relative canonical divisor, see \cite[Section 2.3]{KM}.

\subsection{The Log Discrepancy of a Valuation}\label{disc}
The log discrepancy of a real valuation over a smooth variety was introduced in \cite{JM} and extended to the normal case in  \cite{BFFU}.
For our purposes, we will only need to define the log discrepancy of a valuation over a $\QQ$-Gorenstein variety $X$.

 As above, let $ E\subset Y\overset{f}{\to} X$ be a proper birational morphism, $Y$ a normal variety, and $E$ a prime divisor. Additionally, we assume that $X$ is $\QQ$-Gorenstein.
 
 We first define the log discrepancy of $\ord_E$ to be 
 \[
 A_X( \ord_E) \coloneqq  1+  \text{ coefficient of $E$ in } K_{Y/X} .\]
 We define the log discrepancy for a divisorial valuations $\la \ord_E$, by setting 
 \[
 A_X( \lambda \ord_E ): = \lambda A_X(\ord_E).\]
 There is a unique way to extend $A_X$ to a lower semicontinuous function on $\Val_X$ that respects scaling \cite[Theorem 3.1]{BFFU}.
 Thus, $A_X(\la v) = \la A_X(v)$ for all $v\in \Val_{X}$ and $\la \in \RR_{>0}$. Additionally, a variety $X$ is klt  if and only if $A_X(v)> 0$ for all $v\in \Val_X$. 

\subsection{Graded Sequences of Ideals}
A graded sequence of ideals on a variety $X$ is a sequence of ideals $\ab= \{ \fa_m\}_{m\in \NN}$ such that $\fa_{m} \cdot \fa_{n} \subset \fa_{m+n}$ for all $m,n \in \NN$. To simplify exposition, we always assume that $\fa_m $ is not equal to the zero ideal for all $m\in \NN$. 

We provide two examples of graded sequences of ideals. 
\begin{enumerate}
\item Let $\fb$ be a nonzero ideal on $X$. We may define a graded sequence $\ab$ by setting $\fa_m\coloneqq  \fb^m$ for all $m\in \NN$. This example is trivial. 
\item We fix  $v\in \Val_X$ and define $\ab(v)= \{ \fa_m(v) \}_{m \in \NN}$ as follows. If $U\subseteq X$ is an open affine set such that $c_X(v) \in U$, then
\[
\fa_m (v)(U) \coloneqq \{ f\in \cO_X(U) \,\vert\, v(f) \geq m \}.\]
If $c_x(v) \notin U$, then $\fa_m(v)(U) \coloneqq  \cO_X(U)$.  
If $c_X(v)$ is a closed point $x$, we have that each ideal $\fa_m(v)$ is $\fm_x$-primary,\footnote{This is equivalent to saying that each ideal $\fa_m(v)$ vanishes only at $x$.} where $\fm_x \subseteq \cO_X$ denotes the ideal of functions vanishing at $x$.
\end{enumerate}

Given $v\in \Val_X$ and a graded sequence $\ab$, we may evaluate $\ab$ along $v$ by setting 
\[
v(\ab) \coloneqq \inf_{m \in \NN} \frac{ v(\fa_m)}{m} = \lim_{m \to \infty} \frac{v(\fa_m)}{m}.\]
See \cite[Lemma 2.3]{JM} for  a proof of the previous equality.

\subsection{Multiplicities}\label{multsection}
Let $X$ be a variety of dimension $n$ and $x\in X$ a closed point. Let $\fm_x \subseteq \cO_X$ denote the ideal of functions vanishing at $x$. We recall that 
for an $\fm_x$-primary ideal $\fa$, the \emph{Hilbert-Samuel Multiplicity} of $\fa$ is 
\[
\hs(\fa) \coloneqq  \lim_{m\to \infty} \frac{   \length ( \cO_{X,x}/\fa^m )}{m^n /n!}
.\]
If $\fa \subseteq \fb \subseteq \cO_X$ are $\fm_x$-primary ideals on $X$, then $\hs( \fa ) \geq \hs(\fb)$. Additionally, $\hs(\fa)= \hs(\overline{\fa})$ where $\overline{\fa}$ denotes the integral closure of $\fa$. 

We recall the valuative definition of the \emph{integral closure} of an ideal $\fa$ on a normal variety $X$ \cite[Example 9.6.8]{Laz}. Let $U\subset X$ affine open subset. We have
\[
\overline{ \fa}(U)\coloneqq  \{ f\in \cO_X(U) \vert w(f)  \geq w(\fa)  \text{ for all } w\in \Val_U \text{ divisorial}  \}.\]

\subsection{Volumes}

Let $\ab$ be a graded sequence of ideals with the property that  each $\fa_m$ is $\fm_x$ primary. The \emph{volume} of $\ab$ is defined as 
\[
\vol( \ab) \coloneqq  \limsup_{m \to \infty} \frac{  \length ( \cO_{X,x}/\fa_m )}{m^n /n!}
.\]
 A similar invariant is the \emph{multiplicity} of $\ab$, which is defined  as 
\[
\hs(\ab) = \lim_{m\to \infty} \frac{ \hs(\fa_m)}{m^n}
.\]
In various degrees of generality, it has been proven that 
\[
\hs(\ab) = \vol(\ab) 
\]
\hspace{-1.45 mm} \cite[Corollary C]{ELS}
\cite[Theorem 1.7]{mus}
\cite[Theorem 3.8]{NO}
\cite[Theorem 6.5]{Cut}.
In our setting, the above equality will always hold.  Additionally, by \cite[Theorem 1.1]{Cut}, we also have that
\[
\vol( \ab) \coloneqq  \lim_{m \to \infty} \frac{  \length ( \cO_{X,x}/\fa_m )}{m^n /n!}
.\]

For a valuation $v\in \Val_{X,x}$, the \emph{volume} of $v$ is given by
\[
\vol(v) \coloneqq  \hs ( \ab(v)) .\]
Note that if $\lambda \in \RR_{>0}$, then $\vol(\lambda v) = \vol(v)/\lambda^n$. 

\subsection{Log Canonical Thresholds}\label{lct}
The \emph{log canonical threshold} is an invariant of singularities that has received considerable interest in the field of birational \cite[Section 8]{SingPairs}. 
For a nonzero ideal $\fa$ on a klt variety $X$, the log canonical threshold of $\fa$ is given by
\begin{equation}\label{lcteq}
\lct(\fa) \coloneqq  \inf_{v\in \Val_X} \frac{A_X( v)}{v(\fa)}
.\end{equation}
 (We are using the convention that if $v(\fa)=0$, then $A(v)/v(\fa) = +\infty$.) Thus, $\lct(\cO_X) = +\infty$. 
  We say that a valuation $v^\ast$ computes $\lct(\fa)$ if $\lct(\fa) = A(v^\ast)/v^\ast(\fa)$.
  
The invariant satisfies the following properties. If $m\in \ZZ_{>0}$, then
\[
\lct( \fa^m) = \lct(\fa)/m.\] Additionally, if $\fa \subseteq \fb$, then
\[
\lct(\fa) \leq \lct(\fb) .\]

In Equation \ref{lcteq}, the infimum may be taken over just the set of divisorial valuations. 
 Furthermore, let $\mu: Y\to X$ be a \emph{log resolution} of $\fa$. That is to say $\mu$ is a projective birational morphism such that:
 \begin{enumerate}
 \item $Y$ is smooth,
 \item
 $\fa\cdot \cO_Y= \cO_Y(-D)$ for an effective divisor $D$ on $Y$,
 \item $\Exc(\mu)$ has pure codimension 1, and
 \item $D_{\red}+ \Exc(\mu)$ has simple normal crossing.
 \end{enumerate}
In this case, we have
\[
\lct(\fa)= \min_{i=1,\ldots,r} \frac{ A_X(\ord_{E_i})}{\ord_{E_i}(\fa)}.\]
where $D= \sum_{i=1}^r a_i E_i$ and the $E_i$ are prime. (Note that $\ord_{E_i}(\fa)=a_i$.) 
Thus, there  exists a divisorial valuation that computes $\lct(\fa)$.

For a graded sequence of ideals $\ab$ on $X$, the log canonical threshold of $\ab$ is given by
 \[
\lct( \ab) : = \lim_{m\to \infty} m\cdot \lct(\fa_m) = \sup_{m} m\cdot \lct(\fa_m) 
.\]
By \cite{JM} in the smooth case and \cite{BFFU} in full generality, we have 
\[
\lct(\ab) = \inf_{v\in \Val_X} \frac{A_X(v)}{v(\ab))} .\]
We say $v^\ast\in \Val_X$ computes $\lct(\ab)$ if $\lct(\ab) = A_X(v^\ast)/v^\ast(\ab)$. 
 Such valuations $v^\ast$ always exist (see Appendix \ref{appcomputing}).
 When $X$ is smooth, this is precisely \cite[Theorem A]{JM}. 

\section{Approximation of Valuation Ideals}\label{secels}

In this section we extend the arguments of \cite{ELS} to approximate valuation ideals on singular varieties. We will use this approximation to determine the rate of convergence of $\{ \hs(\fa_m(v))/m^n \}_{m\in \NN}$ for a fixed valuation $v$.  The main technical tool is the asymptotic multiplier ideal of a graded family of ideals. For an excellent reference on multiplier ideals, see  \cite[Ch. 9]{Laz}.

\subsection{Multiplier Ideals}

Let $\fa \subseteq \cO_X$ be a nonzero ideal on a $\QQ$-Gorenstein variety $X$. 
Consider a log resolution $\mu:Y \to X$ of $\fa$ such that $\fa \cdot \cO_Y = \cO_Y(-D)$. For a rational number $c>0$, the \emph{multiplier ideal} 
\[
\cJ(X, c\cdot \fa) \coloneqq  \mu_\ast \cO_Y(  \lceil K_{Y/X} -  cD \rceil ) \subseteq \cO_X.\]
Note that if $c$ is an integer, than $\cJ(X,c\cdot \fa) = \cJ(X,\fa^c)$. 

Alternatively, the multiplier ideal can be understood valuatively. If $X$ is an affine variety, then 
\[
\cJ(X,c\cdot \fa)(X) = \left\{ f\in \cO_X(X) \, | \, 
v(f) > c v(\fa)- A_X(v) \text{ for all $v\in \Val_X$ } \right\} . \]
When $X$ is not necessarily affine, the above criterion allows us to understand the multiplier ideal locally.

It is important to note the relationship between the log canonical threshold and the multiplier ideal. If $X$ is klt, then 
\[
\lct(\fa) = \sup\{ c \,\vert \,\cJ(X,c \cdot \fa) = \cO_X \} .\]

The following lemma provides basic  properties of multiplier ideals. The proof is left to the reader. See \cite[Proposition 9.2.32]{Laz} for the case when $X$ is smooth.
\begin{lemma}\label{mtech}
Let $X$ be a $\QQ$-Gorenstein variety and $\fa,\fb$ nonzero ideals on $X$. 
\begin{enumerate}
\item If $X$ is a klt variety, then 
\[
\fa \subseteq \cJ(X, \fa). \]
\item If $\fa \subseteq \fb$ and $c\in \QQ_{>0}$, then
\[
\cJ(X,c\cdot \fa) \subseteq \cJ(X,c\cdot \fb).
\] 
\item For rational numbers $c\geq d>0$, we have that 
\[
\cJ(X, c \cdot \fa) \subseteq \cJ(X, d\cdot \fa).\]
\end{enumerate}
\end{lemma}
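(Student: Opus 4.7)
The plan is to reduce all three statements to pointwise inequalities on $\Val_X$ using the valuative characterization of the multiplier ideal stated immediately before the lemma. Since the assertions are local and the valuative description is given for affine $X$, I would restrict to an affine open to check membership of a section in the relevant ideal; the global conclusions follow by sheafifying.

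For (1), fix $f \in \fa(X)$. For any $v \in \Val_X$, the definition of $v(\fa)$ gives $v(f) \geq v(\fa)$. Since $X$ is klt, $A_X(v) > 0$, so
\[
v(f) \;\geq\; v(\fa) \;>\; v(\fa) - A_X(v),
\]
which is exactly the valuative condition for $f \in \cJ(X,\fa)(X)$.

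For (2) and (3), I would exploit monotonicity of the linear functional $v \mapsto c v(\fa) - A_X(v)$ in the relevant variable. If $\fa \subseteq \fb$, then $v(\fa) \geq v(\fb)$ for every $v \in \Val_X$, so $cv(\fa) - A_X(v) \geq cv(\fb) - A_X(v)$, and any $f$ satisfying the strict inequality $v(f) > cv(\fa) - A_X(v)$ for all $v$ automatically satisfies $v(f) > cv(\fb) - A_X(v)$ for all $v$, giving (2). Similarly, since $v(\fa) \geq 0$ and $c \geq d > 0$, we have $cv(\fa) \geq dv(\fa)$, and the same comparison of strict inequalities yields (3).

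There is no substantive obstacle: the entire lemma is a formal consequence of the valuative description, and the only care needed is the initial reduction from sheaves to sections on affine opens. Alternatively, one can argue on a common log resolution $\mu : Y \to X$ of $\fa$ (and $\fb$ for (2)), noting that the klt hypothesis for (1) ensures $\lceil K_{Y/X}\rceil \geq 0$ so that $\overline{\fa} = \mu_*\cO_Y(-D) \subseteq \mu_*\cO_Y(\lceil K_{Y/X} - D\rceil) = \cJ(X,\fa)$, while inclusions $\fa \subseteq \fb$ and inequalities $c \geq d$ translate into comparisons of the effective divisors $cD$ versus $cD_\fb$ or $dD$, and hence into the desired inclusions of pushforwards.
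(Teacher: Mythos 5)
Your proposal is correct. The paper itself leaves this lemma to the reader, pointing to \cite[Proposition 9.2.32]{Laz} for the smooth case, where the standard argument is exactly your alternative route: compare $\lceil K_{Y/X}-cD\rceil$ with $\lceil K_{Y/X}-cD_{\fb}\rceil$ or $\lceil K_{Y/X}-dD\rceil$ on a common log resolution and push forward, and for (1) use that klt gives $\lceil K_{Y/X}\rceil\geq 0$ so $\fa\subseteq\overline{\fa}=\mu_\ast\cO_Y(-D)\subseteq\cJ(X,\fa)$. Your primary route through the valuative characterization is equally valid (the paper states that characterization just before the lemma), and all three monotonicity checks --- $v(f)\geq v(\fa)>v(\fa)-A_X(v)$ for (1), $v(\fa)\geq v(\fb)$ for (2), and $cv(\fa)\geq dv(\fa)$ for (3) --- are carried out correctly; the only point worth being explicit about, which you note, is the reduction to affine opens since the valuative description is stated for global sections.
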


Multiplier ideals satisfy the following ``subadditivity property.'' The property was first observed and proved  by Demailly-Ein-Lazarsfeld in the smooth case \cite{DEL}. The statement was extended to the singular case in \cite[Theorem 2.3]{Tak}  and \cite[Theorem 7.3.4]{Eis}. 

\begin{theorem}[Subadditivity] \label{sub}
If $X$ is a $\QQ$-Gorenstein variety, $\fa,\fb$ nonzero ideals on $X$, and $c,d\in \ZZ_{>0}$, then 
\[
\Jac_X \cdot 
\cJ(X, c\cdot (\fa \cdot \fb) )\subseteq \cJ(X, c \cdot \fa)\cdot \cJ(X, c\cdot \fb),\]
where $\Jac_X$ denotes the Jacobian ideal of $X$. 
\end{theorem}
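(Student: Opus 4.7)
The plan is to reduce to the smooth subadditivity of Demailly--Ein--Lazarsfeld via the diagonal trick, with the Jacobian ideal absorbing the defect that arises from pushing forward through the singularities of $X$.

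First, I would introduce the diagonal $\Delta \colon X \hookrightarrow X \times X$ and form the ``decoupled'' ideals $\tilde{\fa} \coloneqq p_1^{-1}\fa \cdot \cO_{X \times X}$ and $\tilde{\fb} \coloneqq p_2^{-1}\fb \cdot \cO_{X \times X}$ on $X \times X$, where $p_1,p_2$ are the projections. By construction $\Delta^{-1}(\tilde{\fa} \cdot \tilde{\fb}) \cdot \cO_X = \fa \cdot \fb$. Pulling back log resolutions of $\fa$ and $\fb$ separately from each factor produces a log resolution of $\tilde{\fa}\tilde{\fb}$ on $X \times X$, on which $K_{\cdot / (X\times X)}$ splits as a sum of pullbacks from the two factors. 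This yields a K\"unneth-type factorization
\[
\cJ\bigl(X \times X,\, c \cdot \tilde{\fa} \cdot \tilde{\fb}\bigr) \;=\; \cJ(X, c \cdot \fa) \,\boxtimes\, \cJ(X, c \cdot \fb),
\]
where $\boxtimes$ denotes the ideal on $X \times X$ with stalk at $(x,y)$ equal to the image of $\cJ(X,c\fa)_x \otimes \cJ(X,c\fb)_y$ in $\cO_{X \times X,(x,y)}$.

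Second, I would invoke a restriction theorem for multiplier ideals along $\Delta$. In a smooth ambient this is the classical Ein--Lazarsfeld restriction theorem. Here $X \times X$ is only $\QQ$-Gorenstein and has singularities along the diagonal, which forces a correction: Eisenstein's singular restriction theorem gives an inclusion of the form
\[
\Jac_X \cdot \cJ\bigl(X,\, c \cdot \Delta^{-1}\fc \cdot \cO_X\bigr) \;\subseteq\; \Delta^{-1}\cJ(X \times X,\, c \cdot \fc) \cdot \cO_X
\]
for an ideal $\fc$ on $X \times X$. Applying this with $\fc = \tilde{\fa} \cdot \tilde{\fb}$ and combining with the factorization from the previous step gives exactly $\Jac_X \cdot \cJ(X, c \cdot \fa \cdot \fb) \subseteq \cJ(X, c \cdot \fa) \cdot \cJ(X, c \cdot \fb)$.

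The main obstacle is the singular restriction theorem itself. On smooth $Z$ one pulls back to a common log resolution where $X$ becomes a smooth divisor and uses the clean adjunction $K_{Y/Z}\vert_{X'} = K_{X'/X}$ together with vanishing. When $Z = X \times X$ is only $\QQ$-Gorenstein, the discrepancy of the strict transform of $\Delta$ is no longer controlled by ordinary adjunction, and the defect is measured precisely by $\Jac_X$. Making this rigorous requires the key identity $\Jac_X \cdot \omega_X \subseteq \pi_* \omega_Y$ for a resolution $\pi \colon Y \to X$, together with careful bookkeeping of how the Jacobian lifts through a simultaneous resolution of $X \times X$ and the diagonal. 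Once this restriction statement is in hand, the diagonal argument packages everything mechanically; the entire analytic difficulty of subadditivity is absorbed into the comparison between $\omega_X$ and $\pi_* \omega_Y$.
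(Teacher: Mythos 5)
The paper does not prove this statement itself---it quotes it from Demailly--Ein--Lazarsfeld in the smooth case and from Takagi \cite{Tak} and Eisenstein \cite{Eis} in the $\QQ$-Gorenstein case---and your outline is precisely the argument of those cited sources: the K\"unneth factorization of $\cJ(X\times X,\, c\cdot\tilde{\fa}\cdot\tilde{\fb})$ via a product log resolution, followed by the Jacobian-corrected restriction theorem along the diagonal, with the correction ultimately traced to $\Jac_X\cdot\omega_X\subseteq\pi_\ast\omega_Y$. Your reduction is correct and the restriction inclusion is stated in the right direction, so this matches the proof the paper is implicitly relying on.
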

We recall that for a variety $X$, the \emph{Jacobian} ideal of $X$ is $\Jac_X \coloneqq  \Fitt_0( \Omega_X)$, where $\Fitt_0$ denotes the $0$-th fitting ideal as in \cite[Section 20.2]{Bud}. Note that the singular locus of $X$ is equal to  $\Cosupp(\Jac_X)$.

\subsection{Asymptotic Multiplier Ideals}
Let $\ab$ be a graded sequence of ideals on a $\QQ$-Gorenstein variety $X$ and $c>0$ a rational number. We  recall the definition of the asymptotic multiplier ideal $\cJ(c\cdot \ab)$. By Lemma \ref{mtech}, we have that 
\[
\cJ \left( X, \frac{1}{p} \cdot \fa_{mp} \right) \subseteq \cJ \left( X, \frac{1}{pq} \cdot \fa_{pqm} \right) \]
for all positive integers $p,q$. From the above inclusion and Noetherianity, we conclude that
\[
\left\{ \cJ \left(X, \frac{1}{p} \cdot \fa_{pm} \right) \right\}_{p\in \NN}
\]
has a unique maximal element. The $m$-th asymptotic multiplier ideal $\cJ(X,m\cdot \ab)$ is defined to be this element. 
Like the standard multiplier ideal, the asymptotic multiplier ideal can also be understood valuatively. 
\begin{proposition}\label{val} \cite[Theorem 1.2]{BFFU} If $\ab$ is a graded sequence of ideals on a normal affine variety and $c>0$ a rational number, then
\[
\cJ(X, c \cdot \ab) = \{ f\in\cO_X(X) \, \vert \, v(f) > cv(\ab) - A_X(v) \text{ for all $v\in \Val_X$} \}.
\]
\end{proposition}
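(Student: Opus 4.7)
The plan is to prove the two inclusions separately. One direction follows cleanly from the defining stabilization property for $\cJ(X, c \cdot \ab)$, while the reverse inclusion requires uniform-in-$v$ control on the convergence $v(\fa_p)/p \to v(\ab)$, which I view as the main difficulty.

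For the inclusion $\subseteq$, recall that by definition there is a sufficiently divisible $p$ with $\cJ(X, c \cdot \ab) = \cJ(X, (c/p) \cdot \fa_p)$. Applying the valuative description of the multiplier ideal of a single ideal on a normal $\QQ$-Gorenstein variety (the affine form recalled just before Proposition \ref{val}), any $f$ in this ideal satisfies $v(f) > (c/p) v(\fa_p) - A_X(v)$ for every $v \in \Val_X$. Combined with $v(\ab) = \inf_m v(\fa_m)/m \leq v(\fa_p)/p$ and $c > 0$, this yields $v(f) > c v(\ab) - A_X(v)$, placing $f$ in the right-hand side.

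For the reverse inclusion, suppose $f$ satisfies $v(f) > c v(\ab) - A_X(v)$ for every $v \in \Val_X$. The goal is to produce a single $p$ (in the stable range for the asymptotic multiplier ideal) such that $v(f) > (c/p) v(\fa_p) - A_X(v)$ for every $v$ \emph{simultaneously}, since then $f \in \cJ(X, (c/p) \cdot \fa_p) = \cJ(X, c \cdot \ab)$ by the same single-ideal valuative description. The essential obstacle is precisely this uniformity: the convergence $v(\fa_p)/p \downarrow v(\ab)$ is a priori only pointwise in $v$, so one cannot directly pick a single $p$ handling all valuations at once.

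I would circumvent this by reducing to a finite problem on a fixed log resolution. Choose $p_0$ large enough that the asymptotic multiplier ideal has stabilized at level $p_0$, and take a log resolution $\mu : Y \to X$ of $\fa_{p_0}$, writing $\fa_{p_0} \cdot \cO_Y = \cO_Y(-D_{p_0})$. Membership of $f$ in $\cJ(X, (c/p_0) \cdot \fa_{p_0})$ translates, via the log-resolution formula, into finitely many strict inequalities $\ord_{E_i}(f) > (c/p_0) \ord_{E_i}(\fa_{p_0}) - A_X(\ord_{E_i})$, one for each prime component $E_i$ of $\Exc(\mu) \cup \Supp(D_{p_0})$. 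For each of these finitely many $E_i$ we have $\ord_{E_i}(\fa_{mp_0})/(mp_0) \to \ord_{E_i}(\ab)$ as $m \to \infty$; since there are only finitely many indices $i$ and the hypothesized inequality $\ord_{E_i}(f) > c \ord_{E_i}(\ab) - A_X(\ord_{E_i})$ is strict, a single sufficiently large $m$ absorbs the approximation error for every $E_i$ at once. Replacing $p_0$ by this $mp_0$ (which remains in the stable range) then gives $f \in \cJ(X, (c/(mp_0)) \cdot \fa_{mp_0}) = \cJ(X, c \cdot \ab)$, completing the argument. A side remark: the Jacobian correction that enters the subadditivity theorem (Theorem \ref{sub}) does not appear here because the valuative characterization of the single-ideal multiplier ideal on a normal $\QQ$-Gorenstein variety holds without any such correction.
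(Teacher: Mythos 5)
First, a remark on context: the paper does not prove this proposition at all --- it is quoted verbatim from \cite[Theorem 1.2]{BFFU} --- so your attempt is being measured against that reference rather than against an argument in the text. Your inclusion $\subseteq$ is correct and is the easy half: stabilization reduces to a single ideal $\fa_p$, the single-ideal valuative description applies, and $v(\ab)\le v(\fa_p)/p$ together with $c>0$ gives the desired inequality.

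The reverse inclusion has a genuine gap, and it sits exactly at the uniformity issue you correctly flag as the main difficulty. Having fixed a log resolution $\mu\colon Y\to X$ of $\fa_{p_0}$, the finitely many inequalities on the components $E_i$ of $\Exc(\mu)+\Supp(D_{p_0})$ certify membership in $\cJ(X,(c/p_0)\cdot\fa_{p_0})$, but they do \emph{not} certify membership in $\cJ(X,(c/(mp_0))\cdot\fa_{mp_0})$ for $m>1$: that ideal must be computed on a log resolution of $\fa_{mp_0}$, the ideal $\fa_{mp_0}\cdot\cO_Y$ need not even be invertible on $Y$, and the relevant test divisors form a different (and typically growing) finite set as $m$ increases. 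Verifying your inequalities only on the fixed $E_i$ shows at best that $f$ lies in some ideal \emph{containing} $\cJ(X,(c/(mp_0))\cdot\fa_{mp_0})$, which is the wrong direction. You cannot repair this by resolving $\fa_{mp_0}$ instead, since the choice of $m$ depends on the finite list of divisors, which would then depend on $m$. So the non-uniformity of the convergence $v(\fa_q)/q\downarrow v(\ab)$ has not been absorbed; it has been relocated into the choice of resolution. This is precisely why the proof in \cite{BFFU} requires heavier machinery --- the net of all log resolutions, the nef envelope (b-divisor) computing $v(\ab)$, and compactness/retraction arguments on dual complexes of quasi-monomial valuations --- rather than a single fixed model.
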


The asymptotic multiplier ideals satisfy the following property. This property will allow us to approximate valuation ideals.

\begin{proposition}\label{comp}
If $\ab$  is a graded sequence of ideals on a klt variety $X$ and $m,\ell \in \NN$, then
\[
 (\Jac_X)^\ell  \fa_{m}^\ell \subseteq
 (\Jac_X)^\ell  \fa_{m\ell} \subseteq  \cJ(m \cdot \ab)^\ell.\]
\end{proposition}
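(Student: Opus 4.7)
The statement contains two inclusions, and the first, $\fa_m^\ell \subseteq \fa_{m\ell}$, is immediate from applying the graded property $\fa_m\cdot \fa_n \subseteq \fa_{m+n}$ a total of $\ell - 1$ times; multiplying by $(\Jac_X)^\ell$ preserves it. So the substantive work is to prove $(\Jac_X)^\ell \fa_{m\ell} \subseteq \cJ(X, m \cdot \ab)^\ell$.

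The plan is to chain three ingredients from the preliminaries. First, since $X$ is klt, Lemma \ref{mtech}(1) gives $\fa_{m\ell} \subseteq \cJ(X, \fa_{m\ell})$. Second, iterated subadditivity (Theorem \ref{sub}), applied to the product of $\ell$ copies of $\fa_{m\ell}$ at coefficient $c = \tfrac{1}{\ell}$, together with the rescaling identity $\cJ(X, \tfrac{1}{\ell} \fa_{m\ell}^\ell) = \cJ(X, \fa_{m\ell})$ (immediate from any log resolution, since the coefficient $\tfrac{1}{\ell}$ and the exponent $\ell$ cancel on the divisor $-\ell D$), yields
\[
(\Jac_X)^{\ell-1} \cdot \cJ(X, \fa_{m\ell}) \subseteq \cJ(X, \tfrac{1}{\ell} \fa_{m\ell})^{\ell}.
\]
Third, taking $p = \ell$ in the defining family $\{\cJ(X, \tfrac{1}{p} \fa_{pm})\}_{p \in \NN}$ for the asymptotic multiplier ideal gives $\cJ(X, \tfrac{1}{\ell} \fa_{m\ell}) \subseteq \cJ(X, m \cdot \ab)$, and this inclusion passes to $\ell$-th powers. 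Concatenating the three steps yields the slightly stronger inclusion $(\Jac_X)^{\ell-1} \fa_{m\ell} \subseteq \cJ(X, m \cdot \ab)^\ell$, and multiplying by one more copy of $\Jac_X \subseteq \cO_X$ recovers the stated inclusion.

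The only step that requires any bookkeeping is the iterated subadditivity. I would prove it by induction on $\ell$: splitting off one factor of $\fa_{m\ell}$ at a time and invoking Theorem \ref{sub} shows that each inductive step costs exactly one copy of $\Jac_X$, for a total of $\ell - 1$ factors. There is no serious obstacle here; the argument is the singular-variety analogue of the Ein--Lazarsfeld--Smith approximation \cite{ELS}, with the Jacobian factors compensating for the failure of the clean smooth-variety form of subadditivity when $X$ is allowed to have klt singularities.
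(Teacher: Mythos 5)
Your proof is correct and is precisely the argument the paper intends: the paper's proof of Proposition \ref{comp} simply defers to the proof of \cite[Theorem 1.7]{ELS} together with Theorem \ref{sub}, and your three-step chain (the klt containment $\fa_{m\ell}\subseteq\cJ(X,\fa_{m\ell})$, iterated subadditivity at coefficient $1/\ell$ costing $\ell-1$ Jacobian factors, and taking $p=\ell$ in the defining family of the asymptotic multiplier ideal) is exactly how that argument transfers to the singular setting. The only point worth flagging is that you apply subadditivity at the rational coefficient $c=1/\ell$, whereas Theorem \ref{sub} as literally stated allows only $c\in\ZZ_{>0}$; this is harmless, since the sources cited for that theorem (Takagi and Eisenstein) prove subadditivity for rational exponents, which is what the argument genuinely requires.
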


\begin{proof}
The proof is the same as the proof of \cite[Theorem 1.7]{ELS} and relies on Theorem \ref{sub}. 
\end{proof}

\subsection{The Case of Valuation Ideals}

For a valuation $v\in \Val_X$, we examine the asymptotic multiplier ideals of $\ab(v)$. 
We first prove the following elementary lemma. 
\begin{lemma}\label{self}
If $v$ is a valuation on a variety $X$, then $v(\ab(v))=1$.
\end{lemma}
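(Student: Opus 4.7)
The plan is to verify directly the definition $v(\ab(v)) = \inf_m v(\fa_m(v))/m$ and show this infimum equals $1$ by matching lower and upper bounds on $v(\fa_m(v))$.

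First, I would establish the easy lower bound $v(\fa_m(v)) \geq m$ for every $m$. By the definition of $\fa_m(v)$, any germ $f \in \fa_m(v) \cdot \cO_{X, c_X(v)}$ satisfies $v(f) \geq m$, so the minimum in the definition of $v(\fa_m(v))$ is bounded below by $m$. This already gives $v(\ab(v)) \geq 1$.

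For the matching upper bound, the plan is to produce an explicit element of $\fa_m(v)$ whose $v$-value is close to $m$. Since $v$ is non-trivial with center $\xi := c_X(v)$, the point $\xi$ cannot be the generic point of $X$ (otherwise $\cO_v \supseteq K(X)$ and $v$ would be trivial). Thus the maximal ideal of $\cO_{X,\xi}$ is non-zero, and locality of the map $\cO_{X,\xi}\to \cO_v$ shows any non-zero element $f$ of this maximal ideal satisfies $0 < v(f) =: \alpha < \infty$. For each $m$ set $k_m := \lceil m/\alpha \rceil$; then $v(f^{k_m}) = k_m\alpha \geq m$, so $f^{k_m} \in \fa_m(v)\cdot \cO_{X,\xi}$, and therefore
\[
m \;\leq\; v(\fa_m(v)) \;\leq\; v(f^{k_m}) \;=\; k_m\alpha \;<\; m+\alpha.
\]

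Dividing through by $m$ gives $1 \leq v(\fa_m(v))/m < 1 + \alpha/m$, which forces $\lim_{m\to\infty} v(\fa_m(v))/m = 1$, and hence also $\inf_m v(\fa_m(v))/m = 1$, as required. There is no real obstacle here; the only subtlety worth flagging is the justification that a suitable $f$ with $0 < v(f) < \infty$ exists, which is exactly where the non-triviality of $v$ is used.
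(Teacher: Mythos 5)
Your proposal is correct and follows essentially the same route as the paper: the lower bound $v(\fa_m(v))\geq m$ from the definition, and an upper bound obtained by raising something of known positive finite value $\alpha$ to the power $\lceil m/\alpha\rceil$ to land inside $\fa_m(v)$ (the paper uses the ideal $\fa_1(v)$ with $\alpha = v(\fa_1(v))$ where you use a single element $f$ of the maximal ideal, an immaterial difference). The sandwiching and passage to the limit are identical.
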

\begin{proof}
Note that $v(\fa_m(v) ) \geq m$, since $\fa_m(v)$ is the ideal of functions vanishing to at least order $m$ along $v$. Next, set $\alpha \coloneqq  v(\fa_1(v))$. We have $
\fa_1(v)^{ \lceil m/\alpha \rceil} \subseteq \fa_m(v)$,
since $v(\fa_1(v)^{\lceil m/\alpha \rceil})= \alpha \lceil m/\alpha  \rceil \geq m$.
Thus,\[
v(\fa_m(v)) \leq v(\fa_1(v)^{\lceil m /\alpha \rceil}) = \alpha \lceil m /\alpha \rceil.\]
The previous two bounds combine to show \[
1\leq  \frac{ v(\fa_m(v))}{m} 
\leq
\frac{ \alpha \cdot \lceil m/\alpha \rceil}{m}
,\]
and the result follows.
\end{proof}

The following results allows us to approximate valuation ideals. In the case when $X$ is smooth and $v$ is an Abhyankhar  valuation, the theorem below is a slight strengthening of
\cite[Theorem A]{ELS}.
\begin{theorem}\label{approx}
If $X$ is a klt variety and $v\in \Val_X$ satisfying $A_X(v) < +\infty$, then  
\[
(\Jac_X)^\ell  \cdot \fa_{m}^\ell \subseteq (\Jac_X)^\ell  \cdot \fa_{m\ell} \subseteq \fa_{m-e}^\ell
\]
for every $m\geq e$,
where $\ab : = \ab(v)$ and $e\coloneqq  \lceil A_X(v) \rceil$.
\end{theorem}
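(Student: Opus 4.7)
The plan is to prove the left inclusion by a formal graded-sequence argument and the right inclusion by combining Proposition~\ref{comp} with the valuative description of the asymptotic multiplier ideal in Proposition~\ref{val}, evaluated against the distinguished valuation $v$ itself.

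\textbf{Step 1 (left inclusion).} Because $\ab = \ab(v)$ is a graded sequence, $\fa_m^\ell \subseteq \fa_{m\ell}$. Multiplying by $(\Jac_X)^\ell$ preserves the inclusion, giving $(\Jac_X)^\ell \fa_m^\ell \subseteq (\Jac_X)^\ell \fa_{m\ell}$.

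\textbf{Step 2 (right inclusion, reduction to an ideal statement).} By Proposition~\ref{comp}, applied with $m$ and $\ell$ as in the hypothesis, we have
\[
(\Jac_X)^\ell \fa_{m\ell} \subseteq \cJ(m \cdot \ab)^\ell.
\]
So it suffices to prove the single containment $\cJ(X, m \cdot \ab) \subseteq \fa_{m-e}$ for every $m \geq e$; raising to the $\ell$-th power then finishes the argument. Since the containment of ideals is local on $X$, and since on any affine open $U \subseteq X$ not meeting the closure of $c_X(v)$ both sides restrict to $\cO_U$, I would restrict to an affine open containing $c_X(v)$ and apply the valuative criterion there.

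\textbf{Step 3 (the key computation).} On an affine open, Proposition~\ref{val} says that $f \in \cJ(X, m \cdot \ab)$ if and only if
\[
w(f) > m \cdot w(\ab) - A_X(w)
\]
for every $w \in \Val_X$. Specializing to $w = v$, and using Lemma~\ref{self} (which gives $v(\ab(v)) = 1$) together with the hypothesis $A_X(v) \leq e$, we obtain
\[
v(f) > m - A_X(v) \geq m - e.
\]
In particular $v(f) \geq m - e$, which is exactly the condition $f \in \fa_{m-e}(v)$. This proves $\cJ(X, m \cdot \ab) \subseteq \fa_{m-e}$, and combining with Step~2 gives the right inclusion.

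\textbf{Main obstacle.} The genuinely nontrivial input is hidden in Proposition~\ref{comp}, which rests on the singular subadditivity theorem (Theorem~\ref{sub}) and is responsible for the appearance of $\Jac_X$. Once that is in hand, the only delicate point is making sure the valuative description of $\cJ(X, m \cdot \ab)$ is available in sufficient generality; this is why I would reduce to the affine case before invoking Proposition~\ref{val}. The rest is a one-line evaluation using $v(\ab(v)) = 1$ and $A_X(v) \leq e$, which explains why the integer shift $e = \lceil A_X(v) \rceil$ appears on the right.
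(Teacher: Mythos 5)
Your proposal is correct and follows essentially the same route as the paper: Proposition~\ref{comp} (via subadditivity) gives $(\Jac_X)^\ell\fa_{m\ell}\subseteq \cJ(X,m\cdot\ab)^\ell$, and the containment $\cJ(X,m\cdot\ab)\subseteq\fa_{m-e}$ is obtained by testing the valuative description of Proposition~\ref{val} against $v$ itself together with Lemma~\ref{self}. The only differences are expository: you spell out the specialization $w=v$ and the reduction to the affine case, which the paper leaves implicit.
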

\begin{proof}
By Proposition \ref{comp}, we have that
\[
(\Jac_X)^\ell  \cdot \fa_{m}^\ell \subseteq (\Jac_X)^\ell  \cdot \fa_{m\ell} \subseteq 
\cJ(X,m \cdot \ab)^\ell.
\]
Applying Proposition \ref{val} and
and Lemma \ref{self} to $\ab$ gives that
\[
\cJ(X, m \cdot \ab) \subseteq \fa_{m-e},\]
and the result follows. 
\end{proof}

\subsection{Uniform Approximation of Volumes}
Given a valuation $v\in \Val_{X}$  centered at a closed point on a $n$-dimensional variety $X$, we have  
\[
\vol(v) = \lim_{m\to \infty} \frac{\hs(\fa_m(v))}{m^n}
,\]
where $n$ is the dimension of $X$.
The following theorem provides a uniform rate of convergence for the terms in the above limit.

\begin{proposition}\label{volumebound}
Let  $X$ be a klt $n$-dimensional variety and $x\in X$ a closed point. For $\epsilon>0$ and constants $B,E,r \in \ZZ_{>0}$, there exists $N=N(\epsilon, B,E,r)$ such that for every valuation $v\in \Val_{X,x}$ with  $\vol(v)\leq  B$, $A_X(v) \leq E$, and $v(\fm_x)\geq \frac{1}{r}$, we have
\[
\vol(v) \leq
\frac{
\hs(\fa_m(v)) }{m^n} < \vol(v)+ \epsilon .
\text{ for all } m\geq N.\]
\end{proposition}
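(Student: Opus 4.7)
The plan is as follows. The lower bound $\vol(v) \leq \hs(\fa_m(v))/m^n$ is immediate: from $\fa_m(v)^\ell \subseteq \fa_{m\ell}(v)$ one gets $\ell^n \hs(\fa_m(v)) \geq \hs(\fa_{m\ell}(v))$, and dividing by $(m\ell)^n$ and sending $\ell \to \infty$ finishes it. For the upper bound, I apply Theorem~\ref{approx} with $e := \lceil A_X(v)\rceil \leq \lceil E\rceil$ to obtain $(\Jac_X)^\ell \cdot \fa_{m\ell}(v) \subseteq \fa_{m-e}^\ell(v)$ for all $m \geq e$. The chief difficulty is that $\Jac_X$ need not be $\fm_x$-primary when $x$ lies in a positive-dimensional singular locus, so Teissier's Minkowski inequality cannot be applied directly to the factor $(\Jac_X)^\ell$.

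To circumvent this, I fix once and for all a nonzero element $h \in \Jac_X \cdot \cO_{X,x}$; then $h^\ell \cdot \fa_{m\ell}(v) \subseteq \fa_{m-e}^\ell(v)$. Setting $R = \cO_{X,x}$, the short exact sequence
\[
0 \to R/(\fa_{m-e}^\ell(v) : h^\ell) \xrightarrow{\cdot h^\ell} R/\fa_{m-e}^\ell(v) \to R/(\fa_{m-e}^\ell(v) + (h^\ell)) \to 0
\]
combined with the containment $\fa_{m\ell}(v) \subseteq (\fa_{m-e}^\ell(v) : h^\ell)$ yields
\[
\length(R/\fa_{m-e}^\ell(v)) \leq \length(R/\fa_{m\ell}(v)) + \length\bigl(R/(\fa_{m-e}^\ell(v) + (h^\ell))\bigr).
\]
Using $v(\fm_x) \geq 1/r$ to get $\fm_x^{r(m-e)\ell} \subseteq \fa_{m-e}^\ell(v)$, the last term is bounded by $\length(R/(\fm_x^{r(m-e)\ell} + (h^\ell)))$. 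Iterating the sequence $0 \to R/(h^{\ell-1}) \xrightarrow{h} R/(h^\ell) \to R/(h) \to 0$ after tensoring with $R/\fm_x^N$ gives $\length(R/(\fm_x^N + (h^\ell))) \leq \ell \cdot \length(R/(\fm_x^N + (h)))$. Since $R/(h)$ is a domain quotient of dimension $n-1$, its Hilbert polynomial with respect to $\fm_x$ has degree $n-1$, so there is a constant $C = C(X, x, h)$, independent of $v, m, \ell$, with $\length(R/(\fm_x^N + (h))) \leq C N^{n-1}$ for $N \geq 1$.

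Combining these bounds, $\length(R/(\fm_x^{r(m-e)\ell} + (h^\ell))) \leq C r^{n-1}(m-e)^{n-1} \ell^n$. Dividing the main inequality by $\ell^n/n!$ and letting $\ell \to \infty$, using $\lim_\ell \length(R/\fa_{m-e}^\ell(v))\,n!/\ell^n = \hs(\fa_{m-e}(v))$ and $\lim_\ell \length(R/\fa_{m\ell}(v))\,n!/\ell^n = \vol(v)\,m^n$ (the latter by definition of $\vol$), produces
\[
\hs(\fa_{m-e}(v)) \leq n!\,C\,r^{n-1}(m-e)^{n-1} + \vol(v)\,m^n.
\]
Dividing by $(m-e)^n$ and reindexing $m' = m - e$ yields
\[
\frac{\hs(\fa_{m'}(v))}{m'^n} \leq \frac{n!\,C\,r^{n-1}}{m'} + \vol(v)\left(1 + \frac{e}{m'}\right)^n,
\]
and since $e \leq \lceil E\rceil$ and $\vol(v) \leq B$, the right-hand side drops below $\vol(v) + \epsilon$ once $m'$ exceeds an explicit threshold $N(\epsilon, B, E, r)$ depending only on $\epsilon, B, E, r$ (and on the fixed data $n, X, x, h$). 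The main obstacle throughout is handling the Jacobian factor uniformly in $v$; replacing $(\Jac_X)^\ell$ by a single element $h^\ell$ trades ambient Hilbert-Samuel growth of order $\ell^n$ for $(n-1)$-dimensional growth of order $\ell^{n-1}$ on the cut $V(h)$, and this single lost power of $\ell$ is precisely what allows the Jacobian contribution to be absorbed into a term that vanishes like $1/m'$.
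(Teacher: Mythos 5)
Your proof is correct, and it shares the paper's overall skeleton---Theorem \ref{approx}, the containment $\fm_x^{r(m-e)}\subseteq \fa_{m-e}(v)$ coming from $v(\fm_x)\geq 1/r$, and the final reduction to a length count on the hypersurface cut out by a single nonzero $h\in\Jac_X\cdot\cO_{X,x}$---but the mechanism by which you separate the Jacobian contribution from the main term is genuinely different. The paper forms the $\fm_x$-primary ideal $\Jac_X+\fm_x^{mr}$, upgrades the two inclusions to $(\Jac_X+\fm_x^{mr})^\ell\,\fa_{(m+E)\ell}(v)\subseteq\overline{\fa_m(v)^\ell}$ via the valuative criterion for integral closure, and then splits multiplicities with Teissier's Minkowski inequality, so that the error term appears as $\hs(\Jac_X+\fm_x^{mr})^{1/n}/m\to 0$ (proved, as in your argument, by cutting with $h$). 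You instead replace $(\Jac_X)^\ell$ by $h^\ell$ from the outset and run two elementary short exact sequences---the colon-ideal sequence and the $h$-adic filtration of $R/(h^\ell)$ tensored with $R/\fm_x^N$---to obtain the additive bound $\length(R/\fa_{m-e}^\ell(v))\leq\length(R/\fa_{m\ell}(v))+O\bigl((m-e)^{n-1}\ell^{n}\bigr)$, avoiding both integral closure and the Minkowski inequality and producing an explicit $O(1/m')$ error after normalizing. Your route is more self-contained and makes visible exactly where the saved power of $\ell$ comes from (the drop to dimension $n-1$ on $V(h)$); the paper's is shorter once Teissier's inequality is granted and is the standard device in this circle of ideas. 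Two cosmetic remarks: $R/(h)$ need not be a domain (it is a quotient of the domain $R$ by a nonzerodivisor), but all you use is that it has dimension $n-1$, which is correct; and for the term $\length(R/\fa_{m\ell}(v))\,n!/\ell^n$ you only need the inequality $\limsup_\ell \length(R/\fa_{m\ell}(v))\,n!/(m\ell)^n\leq\vol(v)$, which holds for a subsequence of a $\limsup$ even without invoking Cutkosky's theorem that the defining $\limsup$ is a limit.
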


\begin{remark}
In an earlier version of this paper, we proved the following statement with the additional assumption that $x\in X$ is an isolated singularity. We are grateful to Mircea Musta{\c{t}}{\u{a}} for noticing that a modification of the original proof allows us to prove the more general statement. 
\end{remark}

\begin{proof}[Proof of Proposition \ref{volumebound}]
For any valuation $v\in \Val_{X,x}$, the first inequality is well known. Indeed, the inclusion $\fa_{m}(v)^p \subseteq \fa_{mp}(v)$ for  $m,p\in \NN$ implies that 
\[
\frac{ \hs( \fa_{mp}(v))}{(mp)^n }
\leq 
\frac{ \hs( \fa_{m}(v))}{(m)^n }.\]
Fixing $m$ and sending $p\to \infty$ gives 
\[
\vol(v) \leq 
\frac{ \hs( \fa_{m}(v))}{(m)^n }.\]

Next, fix $v\in \Val_{X,x}$ satisfying the hypotheses in the statement  of Proposition \ref{volumebound}. We have 
\[\left(
\Jac_X\right)^\ell \fa_{ m \ell }(v) \subseteq  \left( \fa_{m-e} (v) \right)^\ell \subseteq \left( \fa_{m-E} (v) \right)^\ell,\]
where $e= \lceil A_X(v) \rceil$. The first inclusion is the statement in Theorem \ref{approx}, and the second follows from the assumption that $e \leq E$. 
After replacing $m$ by $m+E$, we get
\begin{equation}\label{inclus1}
\left( \Jac_X \right)^\ell \cdot \fa_{(m+E)\ell}(v) \subseteq  \fa_{m}(v)^\ell.
\end{equation}

On the other hand, the assumption that $v(\fm_x)\geq \frac{1}{r}$ implies that 
\begin{equation}\label{inclus2}
\fm_x^{mr} \subseteq \fa_m(v).
\end{equation} 
It follows from Inclusions \ref{inclus1} and \ref{inclus2} and the valuative criterion for integral closure (Section  \ref{multsection}) that 
\begin{equation}\label{inclus3}
( \Jac_x+ \fm_x^{mr})^\ell  \fa_{(m+E)\ell}(v) \subseteq \overline{ \fa_{m}(v)^\ell}.
\end{equation}
Indeed, let $w$ be a discrete valuation of the function field of $X$ and $f$ and $g$ local sections of $\Jac_X^i$ and $\fm_x^{mrj}$, respectively, with $i+j=\ell$. 
We have $\ell \cdot  w(f) + i \cdot w( \fa_{(m+E)\ell}(v)) \geq i \cdot w(\fa_m(v)^\ell)$ and $ w(g) \geq j  \cdot w(\fa_m(v))$ by the two inclusions. 
Thus, 
\[
w(fg) = w(f)+w(g) \geq \frac{i}{\ell} \left( w(\fa_m(v)^\ell) - w(\fa_{(m+E)\ell}(v)) \right)+ \frac{j}{\ell} w(\fa_m(v)^\ell )
\]
\[
\hspace{8 mm}= w(\fa_m(v)^\ell) - w(\fa_{(m+E)\ell}(v)) 
.\]

From Inclusion \ref{inclus3} and  Teissier's Minkowski Inequality \cite{Min},
we see
\[
\hs \left(\fa_m(v) ^\ell \right)^{1/n}  \leq
\hs
  \left(
  \left( \Jac_X +\fm_{x}^{mr}  \right)^\ell
  \right)^{1/n} + \hs\left(\fa_{(m+E)\ell }(v)\right)^{1/n}.\]
  Next,  note that if $\fa$ is an $\fm_x$-primary ideal, then $\hs(\fa^m)= m^n \hs(\fa)$. Applying this property and dividing by $m \cdot \ell$, gives that
\[
\frac{\hs\left(\fa_m(v)\right)^{1/n}}{m} \leq \frac{\hs \left(\Jac_X+\fm_x^{mr} \right)^{1/n}}{m} + \frac{m+E}{m} \cdot
\frac{ \hs\left( \fa_{(m+E)\ell } (v)\right)^{1/n}}{(m+E)\ell}.\] 
After letting $\ell\to \infty$, we obtain
\[
\frac{\hs\left(\fa_m(v)\right)^{1/n}}{m} \leq \frac{\hs\left(\Jac_X+\fm_x^{mr} \right)^{1/n}} {m} +\frac{m+E}{m} \vol(v)^{1/n} .\]

Since $\vol(v)^{1/n} \leq B^{1/n}$, the assertion will follow if we show that
\[
\lim_{m \to \infty} \frac{ \hs\left(\Jac_X+ \fm_x^{mr} \right)^{1/n}}{m} = 0 .
\]
Choose $h \in \Jac_X \cdot \cO_{X,x}$ that is nonzero and set $R\coloneqq \cO_{X,x}/(h)$ and $\tilde{\fm}_x \coloneqq  \fm_x \cdot R$. We have 
\[\lim_{m \to \infty} \frac{ \hs(\Jac_X+ \fm_x^{mr} )^{1/n}}{m} 
= \lim_{m \to \infty}  \frac{ \length ( \cO_{X,x} /(\Jac_X+ \fm_x^{mr}) )^{1/n}}{m} \leq \lim_{m\to \infty} \frac{ \length(R/ \tilde{\fm}_x^{mr})^{1/n}}{m} .\]
The last limit is $0$, since 
\[
\lim_{m \to \infty} \frac{ \length(R/ {\tilde{\fm}_x}^{mr})}{m^{n-1}/(n-1)!} = \hs({\tilde{\fm}_x}^r)  < \infty .\]
\end{proof}

\section{Normalized Volumes}\label{normal}
For this section, we fix $X$ an $n$-dimensional klt variety and $x\in X$ a closed point. 
As introduced in  \cite{Li1}, the \emph{normalized volume}  of  a valuation $v\in \Val_{X,x}$ is defined as
\[
\nvol (v)\coloneqq A_X(v)^n \vol(v) .\]
In the case when $A_X(v)=+\infty$ and $\vol(v)=0$, we set $\nvol(v)\coloneqq +\infty$. The word ``normalized'' refers to the property that $\nvol(\la v) = \nvol(v)$ for $\la\in \RR_{>0}$. 

Given a graded sequence $\ab$ of $\fm_x$-primary ideals on $X$, we define a similar 
invariant. We refer to 
\[
\lct(\ab)^n \hs(\ab)
\]
as the \emph{normalized multiplicity} of $\ab$. Similar to the normalized volume,  when $\lct(\ab)=+\infty$ and $\hs(\ab)=0$, we set $\lct(\ab)^n \hs(\ab)\coloneqq +\infty$. 
The above invariant was looked at in \cite{DFEM} and \cite{mus}.

The following lemma provides elementary information on the normalized multiplicity. The proof is left to the reader. 
\begin{lemma}\label{details}
Let $\fa$ be an $\fm_x$-primary ideal and $\ab$ a graded sequence of $\fm_x$-primary ideals on $X$. 
\begin{enumerate}
\item If $\lct(\ab)^n \hs(\ab)< +\infty$, then 
\[
\lct(\ab)^n \hs(\ab) =  \lim_{m\to \infty}  \lct(\fa_m)^n \hs(\fa_m).
\]
\item If  $\fb_\bullet$ is a graded sequence given by $\fb_m\coloneqq  \fa^m$, then 
\[
\lct(\fa)^n \hs(\fa) = \lct(\fb_\bullet)^n \hs(\fb_\bullet). 
\]
\item  If $\fa_{N\bullet}$ is the graded sequence whose $m$-th term is $\fa_{N\cdot m}$, then 
\[
\lct(\fa_\bullet)^n \hs(\fa_\bullet) 
=
\lct(\fa_{N \bullet})^n \hs(\fa_{N \bullet}) .\]
\end{enumerate}
\end{lemma}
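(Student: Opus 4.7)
The plan is to reduce all three statements to two basic scaling identities: for a fixed $\fm_x$-primary ideal $\fa$ and $m\in \NN$, $\lct(\fa^m)=\lct(\fa)/m$ and $\hs(\fa^m)=m^n\hs(\fa)$. Combined with the defining expressions
\[
\lct(\ab)=\lim_{m\to\infty} m\cdot \lct(\fa_m),\qquad \hs(\ab)=\lim_{m\to\infty}\frac{\hs(\fa_m)}{m^n}
\]
recorded in Sections~\ref{lct} and~\ref{multsection}, the $n$-th power in the normalization is precisely the right exponent to turn the two opposing scalings into an invariant.

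For part~(1), I would regroup
\[
\lct(\fa_m)^n\,\hs(\fa_m)=\bigl(m\cdot\lct(\fa_m)\bigr)^n\cdot\frac{\hs(\fa_m)}{m^n}.
\]
The two factors converge, respectively, to $\lct(\ab)^n$ and $\hs(\ab)$. The only point to check is that both limits are finite, so that the product passes through: since each $\fa_m$ is $\fm_x$-primary, $\hs(\ab)$ is automatically finite, and the hypothesis $\lct(\ab)^n\hs(\ab)<+\infty$ rules out $\lct(\ab)=+\infty$ (the offending case being $\lct(\ab)=+\infty$ with $\hs(\ab)=0$, which is set to $+\infty$ by the convention in Section~\ref{normal}).

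For part~(2), with $\fb_m=\fa^m$, both defining sequences are literally constant by the scaling identities: $m\cdot\lct(\fb_m)=\lct(\fa)$ and $\hs(\fb_m)/m^n=\hs(\fa)$ for every $m$. Hence $\lct(\fb_\bullet)=\lct(\fa)$ and $\hs(\fb_\bullet)=\hs(\fa)$, and the equality of normalized multiplicities is immediate. For part~(3), a change of variable gives
\[
\lct(\fa_{N\bullet})=\lim_{m\to\infty} m\cdot\lct(\fa_{Nm})=\frac{1}{N}\lim_{m\to\infty}(Nm)\cdot\lct(\fa_{Nm})=\frac{\lct(\fa_\bullet)}{N},
\]
and similarly $\hs(\fa_{N\bullet})=N^n \hs(\fa_\bullet)$. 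The factors of $N$ cancel after raising the first equality to the $n$-th power and multiplying.

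There is no substantive obstacle; each item is bookkeeping about how $\lct$ and $\hs$ rescale. The only mildly delicate point is the finiteness check in~(1), which is handled directly by unpacking the conventions governing the boundary values of the normalized multiplicity.
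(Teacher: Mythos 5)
Your proof is correct, and since the paper leaves this lemma to the reader, your bookkeeping argument — regrouping $\lct(\fa_m)^n\hs(\fa_m)$ as $\bigl(m\cdot\lct(\fa_m)\bigr)^n\cdot\hs(\fa_m)/m^n$ and invoking the scaling identities $\lct(\fa^m)=\lct(\fa)/m$, $\hs(\fa^m)=m^n\hs(\fa)$ — is exactly the intended one. The finiteness check in (1), using that $\hs(\ab)\leq\hs(\fa_1)<+\infty$ and that the convention forces $\lct(\ab)<+\infty$ under the hypothesis, is the one genuinely needed observation and you handle it correctly.
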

\begin{remark}
Fix $\delta>0$. If $\ab$ a graded sequence of $\fm_x$-primary ideals such that $\fa_m \subseteq \fm_x^{ \lfloor \delta m\rfloor }$ for all $m$, then 
\[
\lct(\ab)^n \hs(\ab) < +\infty.
\]
It is always the case that $\hs(\ab)< +\infty$, since $\hs(\ab) \leq \hs(\fa_1)$. 
 The assumption that $\fa_m \subseteq \fm_x^{ \lfloor \delta m\rfloor }$  gives that $\lct( \ab) \leq \lct(\fm_x)/\delta$, the latter of which is $<+\infty$.
\end{remark}

The following proposition relates the normalized volume, an invariant of valuations, to the normalized multiplicity, an invariant of graded sequences of ideals. 
\begin{proposition}[ {\cite[Theorem 27]{Liu}}] \label{relate}
The following equality holds:
\[
\inf_{v\in \Val_{X,x}} \nvol(v)
= 
\inf_{\ab \, \fm_x \text{-primary}   } \lct(\ab)^n \hs(\ab)
=
\inf_{\fa\, \fm_x \text{-primary}  }\lct(\fa)^n \hs(\fa).
\]
\end{proposition}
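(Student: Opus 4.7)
My plan is to chain three inequalities $I_1 \leq I_3 \leq I_2 \leq I_1$, where $I_1$, $I_2$, $I_3$ denote the three infima appearing in the proposition, in the order they are written. The equality $I_2 = I_3$ is formal. The direction $I_2 \leq I_3$ comes from viewing an $\fm_x$-primary ideal $\fa$ as the trivial graded sequence $\fb_m := \fa^m$ and applying Lemma \ref{details}(2), which gives $\lct(\fb_\bullet)^n \hs(\fb_\bullet) = \lct(\fa)^n \hs(\fa)$. For the reverse, I would apply Lemma \ref{details}(1) to an arbitrary graded sequence $\ab$ of finite normalized multiplicity, which exhibits $\lct(\ab)^n \hs(\ab)$ as the limit of the single-ideal normalized multiplicities $\lct(\fa_m)^n \hs(\fa_m)$, each of which is $\geq I_3$.

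For the easy direction $I_2 \leq I_1$, the idea is that any valuation $v \in \Val_{X,x}$ with $\nvol(v) < +\infty$ produces the graded sequence $\ab(v)$ of $\fm_x$-primary ideals. By Lemma \ref{self}, $v(\ab(v)) = 1$, hence $\lct(\ab(v)) \leq A_X(v)/v(\ab(v)) = A_X(v)$. Combined with the definitional equality $\hs(\ab(v)) = \vol(v)$, this yields $\lct(\ab(v))^n \hs(\ab(v)) \leq A_X(v)^n \vol(v) = \nvol(v)$.

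The main content is the remaining inequality $I_1 \leq I_3$, which requires passing from a single $\fm_x$-primary ideal $\fa$ back to a valuation. I would take $v^\ast$ to be a divisorial valuation that computes $\lct(\fa)$; the existence of such a valuation is immediate from the log resolution formula recalled in Section \ref{lct}, and $v^\ast$ is automatically centered at $x$ since $v^\ast(\fa) > 0$ forces $c_X(v^\ast) \in \Cosupp(\fa) = \{x\}$. After rescaling so that $v^\ast(\fa) = 1$, we have $A_X(v^\ast) = \lct(\fa)$. The crucial observation is the containment $\fa^m \subseteq \fa_m(v^\ast)$ for all $m$, which is immediate from $v^\ast(f) \geq m v^\ast(\fa) = m$ for $f \in \fa^m$. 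Combined with the scaling $\hs(\fa^m) = m^n \hs(\fa)$, this yields $\hs(\fa_m(v^\ast))/m^n \leq \hs(\fa)$, and letting $m \to \infty$ gives $\vol(v^\ast) = \hs(\ab(v^\ast)) \leq \hs(\fa)$. Hence $\nvol(v^\ast) \leq \lct(\fa)^n \hs(\fa)$, and taking the infimum over $\fa$ closes the chain.

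I do not anticipate a serious obstacle: every step reduces to a one-line ideal comparison together with one of the lemmas already cited. The subtlest point is ensuring that the infimum in the definition of $\lct(\fa)$ is actually attained, and attained by a divisorial valuation whose finite log discrepancy permits the rescaling $v^\ast(\fa) = 1$. Both properties follow from the log resolution computation of $\lct$ stated in Section \ref{lct}, which is standard in the klt setting.
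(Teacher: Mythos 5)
Your proof is correct, and for the one nontrivial direction it takes a genuinely different, more elementary route than the paper. The paper proves $\inf_v \nvol(v) \le \inf_{\ab} \lct(\ab)^n\hs(\ab)$ directly, by applying Lemma \ref{tech}(1) to an arbitrary graded sequence $\ab$ together with Theorem \ref{computingtheorem} (existence of a valuation computing $\lct(\ab)$), which is a compactness argument in valuation space carried out in Appendix \ref{appcomputing} and is the deepest input to the proposition. You instead close the chain through the third infimum: for a single $\fm_x$-primary ideal $\fa$, a divisorial valuation computing $\lct(\fa)$ is produced by a log resolution, so your inequality $I_1 \le I_3$ needs nothing beyond Section \ref{lct}, and $I_3 = I_2$ is the formal Lemma \ref{details}; the remaining inequality $I_2 \le I_1$ is exactly Lemma \ref{tech}(2), which you reprove. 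In effect you apply Lemma \ref{tech}(1) only to the trivial sequence $\fb_m = \fa^m$, where the computing valuation exists for elementary reasons. What the paper's route buys is the stronger form of Lemma \ref{tech}(1) for general graded sequences, which is what the Main Theorem actually needs later to extract a minimizing valuation from the limiting sequence $\tilde{\fa}_\bullet$; your route establishes the displayed equalities with less machinery but does not by itself yield that refinement. The two small points you flag are indeed fine: the minimizing divisor $E$ in the log resolution formula has $\ord_E(\fa)>0$ because $\lct(\fa)<+\infty$, hence has center $x$ since $\Cosupp(\fa)=\{x\}$, and graded sequences with infinite normalized multiplicity may be discarded before invoking Lemma \ref{details}(1).
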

The previous statement first appeared in \cite{Liu}. In the case when $x\in X$ is a smooth point, it was partially given in \cite[Example 3.7]{Li1}. We provide Liu's proof, since the argument will be useful to us. The proposition is a consequence of the following lemma.

\begin{lemma}[\cite{Liu}]  \label{tech} The following statements hold.

\begin{enumerate}
\item If $\ab$ is a graded sequence of $\fm_x$-primary ideals and $v\in \Val_{X,x}$ 
computes $\lct(\ab)$ (i.e. 
$
\frac{A(v)}{v(\ab)} = \lct(\ab)$ ), then 
\[\nvol(v) \leq \lct(\ab)^n\hs(\ab) .\]
\item If $v\in \Val_{X,x}$, then 
\[
\lct(\ab(v) )^n \hs(\ab(v) )
\leq 
\nvol(v).\] 
\end{enumerate}
\end{lemma}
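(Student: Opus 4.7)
My plan is to prove both inequalities by reducing to the basic identity $v(\ab(v)) = 1$ (Lemma \ref{self}) and the monotonicity of $\hs$ under inclusion of $\fm_x$-primary ideals. The overall strategy exploits the scale invariance of $\nvol$ and of $A_X(v)/v(\ab)$, which lets me normalize valuations as needed.

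For part (2) I would start from the definition $\vol(v) = \hs(\ab(v))$, so that $\nvol(v) = A_X(v)^n \hs(\ab(v))$. The desired inequality then reduces to showing $\lct(\ab(v)) \leq A_X(v)$. This is immediate from the valuative formula
\[
\lct(\ab(v)) = \inf_{w \in \Val_X} \frac{A_X(w)}{w(\ab(v))}
\]
recalled at the end of Section \ref{prelim}, applied with $w = v$: by Lemma \ref{self} we have $v(\ab(v)) = 1$, hence $\lct(\ab(v)) \leq A_X(v)/v(\ab(v)) = A_X(v)$, and raising to the $n$-th power and multiplying by $\hs(\ab(v))$ gives the claim.

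For part (1) I would first use scale invariance to replace $v$ by $\lambda v$ with $\lambda = 1/v(\ab)$, so that after renaming we may assume $v(\ab) = 1$; note this is legitimate since $\nvol(\lambda v) = \nvol(v)$ and $A_X(\lambda v)/(\lambda v)(\ab) = A_X(v)/v(\ab)$, so $\lambda v$ still computes $\lct(\ab)$ and the left-hand side of the claimed inequality is unchanged. The computing hypothesis then becomes $A_X(v) = \lct(\ab)$. Since $v(\ab) = \inf_m v(\fa_m)/m = 1$, I get $v(\fa_m) \geq m$ for every $m$, so every $f \in \fa_m$ satisfies $v(f) \geq m$, yielding the containment $\fa_m \subseteq \fa_m(v)$. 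Monotonicity of the Hilbert--Samuel multiplicity with respect to inclusion then gives $\hs(\fa_m(v)) \leq \hs(\fa_m)$; dividing by $m^n$ and passing to the limit produces $\vol(v) \leq \hs(\ab)$. Multiplying by $A_X(v)^n = \lct(\ab)^n$ completes the argument.

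Neither direction is really an obstacle: the lemma is a clean consequence of the definitions once one has both the variational description of $\lct$ of a graded sequence and the self-valuation identity $v(\ab(v)) = 1$. The only subtlety worth flagging is that in part (1) I am implicitly using $\hs(\fb) \leq \hs(\fa)$ for $\fa \subseteq \fb$ of $\fm_x$-primary ideals, which is standard, together with the fact that the limit $\vol(v) = \lim_m \hs(\fa_m(v))/m^n$ exists (as recorded in Section \ref{prelim} via Cutkosky's theorem), so that passing to the limit in the inequality $\hs(\fa_m(v))/m^n \leq \hs(\fa_m)/m^n$ is legitimate.
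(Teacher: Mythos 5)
Your proposal is correct and follows essentially the same argument as the paper: both parts rescale or invoke $v(\ab(v))=1$ from Lemma \ref{self}, deduce the containment $\fa_m \subseteq \fa_m(v)$ for part (1), and use the variational formula for $\lct$ of a graded sequence for part (2). The only difference is that you spell out the passage from termwise multiplicity comparison to the limit, which the paper leaves implicit.
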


\begin{proof}
To prove (1), we first rescale $v$ so that $v(\ab)=1$. 
Thus, $A_X(v) = A_X(v) /v(\ab)  = \lct(\ab)$.
Since 
\[
1 = v(\ab) \coloneqq  \inf_{m\geq 0} \frac{ v(\fa_m) }{m}
,\]
we see
$v(\fa_m) \geq  m$
and, thus, $\fa_m \subseteq \fa_m(v)$ for all $m$. This implies $
\hs(\ab(v)) \leq \hs(\ab)$, and the desired inequality follows. 

In order to show (2), we note  
\[
\lct( \ab(v)) \coloneqq  \min_{w} \frac{ A_X(w)}{w(\ab(v))} \leq \frac{A_X(v)}{v(\ab(v))} = A_X(v),\]
where the last equality follows from Lemma \ref{self}. Thus, 
\[
\lct(\ab(v))^n \hs (\ab(v)) \leq A_X(v)^n \hs(\ab(v)) = \nvol(v).\] 
\end{proof}

\begin{proof}[Proof of Proposition \ref{relate}]
The first equality follows immediately from the previous proposition and the fact that given a graded sequence $\ab$, there exists a valuation $v^*\in \Val_{X}$ that computes $\lct(\ab)$ by Theorem \ref{computingtheorem}. The last equality follows from Lemma \ref{details}.
\end{proof}

\begin{remark}
Above, we provided a dictionary between the normalized volume of a valuation and the normalized multiplicity of a graded sequence of ideals. The normalized multiplicity also extends to a functional on the set of (formal) plurisubharmonic functions in the sense of \cite{BFJ08}. In a slightly different setting, similar functionals, arising from non-Archimedean analogues of functionals in K{\"a}hler geometry, were explored in \cite{BHJ}. 
\end{remark}

\subsection{Normalized volume minimizers}

\begin{proposition}
If there exists a graded sequence of  $\fm_x$-primary ideals $\tilde{\fa}_\bullet$ such that
\[
\lct(\tilde{\fa}_\bullet)^n \hs(\tilde{\fa}_\bullet) = \inf_{\ab\, \fm_x \text{-primary}  }\lct(\ab)^n \hs(\ab),
\]
then there exists $v^\ast \in \Val_{X,x}$ that is a minimizer of $\nvol_{X,x}$.
Furthermore, if there exists an $\fm_x$-primary ideal $\tilde{\fa}$ such that 
\[
\lct(\tilde{\fa})^n \hs(\tilde{\fa}) = \inf_{\fa \, \fm_x \text{-primary}  }\lct(\fa)^n \hs(\fa)
,\]
then we may choose $v^\ast$ to be divisorial. \end{proposition}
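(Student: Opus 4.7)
The plan is to combine Lemma~\ref{tech}(1), which bounds the normalized volume of a valuation computing $\lct(\ab)$ above by the normalized multiplicity of $\ab$, with the existence of such computing valuations supplied by Theorem~\ref{computingtheorem} (and, for a single ideal, by the log resolution formula recalled in Section~\ref{lct}). Proposition~\ref{relate} then furnishes the matching lower bound, turning an inequality into a minimization.

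For the first assertion I would apply Theorem~\ref{computingtheorem} to the hypothesized minimizing graded sequence $\tilde{\fa}_\bullet$ to obtain $v^\ast \in \Val_X$ satisfying $A_X(v^\ast) = \lct(\tilde{\fa}_\bullet)\cdot v^\ast(\tilde{\fa}_\bullet)$. The main thing to verify is that $v^\ast$ is actually centered at $x$. Since each $\tilde{\fa}_m$ is $\fm_x$-primary, $\Cosupp(\tilde{\fa}_m) = \{x\}$; and because $\lct(\tilde{\fa}_\bullet) < +\infty$ (any divisorial valuation centered at $x$ gives a finite ratio), we must have $v^\ast(\tilde{\fa}_\bullet) > 0$, hence $v^\ast(\tilde{\fa}_m) > 0$ for some $m$, which forces $c_X(v^\ast) \in \Cosupp(\tilde{\fa}_m) = \{x\}$. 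Lemma~\ref{tech}(1) then yields
\[
\nvol(v^\ast) \leq \lct(\tilde{\fa}_\bullet)^n\hs(\tilde{\fa}_\bullet) = \inf_{\ab}\lct(\ab)^n\hs(\ab) = \inf_{v\in\Val_{X,x}}\nvol(v),
\]
where the final equality is Proposition~\ref{relate}, so $v^\ast$ realizes the infimum.

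For the second assertion I would replace $\tilde{\fa}_\bullet$ by the graded sequence $\fb_m \coloneqq \tilde{\fa}^m$, which by Lemma~\ref{details}(2) still achieves the infimum. The advantage is that $\lct(\tilde{\fa})$ is computed by a divisorial valuation: fixing a log resolution $\mu\colon Y \to X$ of $\tilde{\fa}$ with $\tilde{\fa}\cdot\cO_Y = \cO_Y(-\sum a_i E_i)$, the formula from Section~\ref{lct} selects some component $E_i$ with $\lct(\tilde{\fa}) = A_X(\ord_{E_i})/\ord_{E_i}(\tilde{\fa})$. Since $\ord_{E_i}(\tilde{\fa}) > 0$ and $\Cosupp(\tilde{\fa}) = \{x\}$, the same centering argument gives $c_X(\ord_{E_i}) = x$. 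A direct computation shows $\lct(\fb_\bullet) = \lct(\tilde{\fa})$ and $\ord_{E_i}(\fb_\bullet) = \ord_{E_i}(\tilde{\fa})$, so $\ord_{E_i}$ also computes $\lct(\fb_\bullet)$, and Lemma~\ref{tech}(1) produces the desired divisorial minimizer. The only genuinely non-formal input throughout is the centering step; no limit arguments are required here, because the hypothesis already provides an exact minimizer at the level of graded sequences or single ideals, and the substantive existence work has been absorbed into Theorem~\ref{computingtheorem}, the log resolution formula, and Proposition~\ref{relate}.
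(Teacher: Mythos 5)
Your proposal is correct and follows essentially the same route as the paper: invoke Theorem~\ref{computingtheorem} (resp.\ the log resolution formula) to produce a valuation computing $\lct(\tilde{\fa}_\bullet)$ (resp.\ $\lct(\tilde{\fa})$), apply Lemma~\ref{tech}(1), and close with Proposition~\ref{relate} and Lemma~\ref{details}(2). The only difference is that you spell out the verification that the computing valuation is centered at $x$, a detail the paper leaves implicit; your justification is sound, except that the finiteness of $\lct(\tilde{\fa}_\bullet)$ is better deduced from the hypothesis that the normalized multiplicity attains the finite infimum (together with the convention for $\lct=+\infty$) than from the existence of a divisorial valuation centered at $x$.
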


\begin{proof}
Assume there exists such a graded sequence $\tilde{\fa}_\bullet$. By Theorem \ref{computingtheorem}, we may choose a valuation $v^\ast$ that computes $\lct(\tilde{\fa}_\bullet)$. By Lemma \ref{tech}, 
\[
\nvol(v^\ast) \leq \lct(\tilde{\fa}_\bullet)^n \hs(\tilde{\fa}_\bullet) .\]
By Proposition \ref{relate}, the result follows. 

When there exits such an ideal $\tilde{\fa}$, the same argument shows that if $v^\ast$ computes $\lct(\tilde{\fa})$, then $v^\ast$ is our desired valuation. Furthermore, we may choose $v^\ast$ divisorial.  
\end{proof}

\begin{lemma}\label{lcttech}
 If $v^\ast$ is a minimizer of $\nvol_{X,x}$, then 
\[
A_X(v^\ast) \leq \frac{ A_X(w) }{ w( \ab(v^\ast))}
\]
for all $v\in \Val_{X,x}$. 
Furthermore, equality holds if and only if $w= \la v^\ast $ for some $\la \in \RR_{>0}$.
\end{lemma}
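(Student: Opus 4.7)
The plan is to deduce the lemma from the identity $A_X(v^\ast)=\lct(\ab(v^\ast))$, which expresses that the minimizer $v^\ast$ computes the log canonical threshold of its own graded sequence of valuation ideals. Combining Lemma \ref{tech}(2) with Proposition \ref{relate} gives
\[
\lct(\ab(v^\ast))^n\,\hs(\ab(v^\ast)) \leq \nvol(v^\ast) = \inf_{\ab\,\fm_x\text{-primary}}\lct(\ab)^n\,\hs(\ab) \leq \lct(\ab(v^\ast))^n\,\hs(\ab(v^\ast)),
\]
so equality holds throughout. Dividing by $\hs(\ab(v^\ast))=\vol(v^\ast)>0$ and taking $n$-th roots yields $A_X(v^\ast)=\lct(\ab(v^\ast))$. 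The inequality in the lemma then becomes the valuative definition $\lct(\ab(v^\ast))=\inf_w A_X(w)/w(\ab(v^\ast))$.

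The \emph{if} direction of the equality statement is immediate: when $w=\lambda v^\ast$ with $\lambda>0$, both $A_X(w)=\lambda A_X(v^\ast)$ and $w(\ab(v^\ast))=\lambda\,v^\ast(\ab(v^\ast))=\lambda$ by Lemma \ref{self}, so the quotient equals $A_X(v^\ast)$. For the \emph{only if} direction, suppose $A_X(w)/w(\ab(v^\ast))=A_X(v^\ast)$. After rescaling $w$ so that $w(\ab(v^\ast))=1$, we have $A_X(w)=A_X(v^\ast)=\lct(\ab(v^\ast))$, so $w$ also computes $\lct(\ab(v^\ast))$. Lemma \ref{tech}(1) then gives $\nvol(w)\leq \lct(\ab(v^\ast))^n\,\hs(\ab(v^\ast))=\nvol(v^\ast)$, and minimality of $v^\ast$ forces equality; in particular $\vol(w)=\vol(v^\ast)$. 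The normalization $w(\ab(v^\ast))=1$ also forces $w(\fa_m(v^\ast))\geq m$ for every $m$, hence $\fa_m(v^\ast)\subseteq \fa_m(w)$ for all $m$, and (by passing to powers and taking limits) $w(f)\geq v^\ast(f)$ for every $f\in \cO_{X,x}$.

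The main obstacle is to promote this pointwise containment of graded sequences, together with the coincidence $\hs(\ab(v^\ast))=\hs(\ab(w))$ of their asymptotic multiplicities, to the identity $w=v^\ast$. My strategy is to invoke a graded analogue of Rees's theorem: since $\fa_m(v^\ast)\subseteq \fa_m(w)$ are integrally closed $\fm_x$-primary ideals with matching asymptotic Hilbert-Samuel multiplicities, one should conclude $\fa_m(v^\ast)=\fa_m(w)$ for every $m$. Applying this equality to powers $f^k$ and taking $k\to\infty$ then recovers $w(f)=v^\ast(f)$ for every $f\in\cO_{X,x}$, and hence $w=v^\ast$ on $K(X)^\times$. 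The delicate step is passing from equality of the asymptotic volume to equality of multiplicities on each individual $\fa_m$; I expect this will require a Teissier--Minkowski argument in the spirit of the estimates carried out in the proof of Proposition \ref{volumebound}, combined with classical Rees-type rigidity for integrally closed $\fm_x$-primary ideals.
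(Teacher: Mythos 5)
Your derivation of the inequality and of the ``if'' direction is sound, though it takes a slightly different route from the paper: you first establish the global identity $A_X(v^\ast)=\lct(\ab(v^\ast))$ by squeezing $\nvol(v^\ast)$ between $\lct(\ab(v^\ast))^n\hs(\ab(v^\ast))$ via Lemma \ref{tech}(2) and Proposition \ref{relate}, and then read off the inequality from the valuative formula for $\lct$ of a graded sequence. The paper argues more directly: rescale $w$ so that $w(\ab(v^\ast))=1$, deduce $\fa_m(v^\ast)\subseteq\fa_m(w)$ and hence $\vol(w)\leq\vol(v^\ast)$, and observe that $A_X(w)<A_X(v^\ast)$ would contradict minimality. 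The two arguments are essentially equivalent in content, and your version has the mild virtue of making explicit that $v^\ast$ computes $\lct(\ab(v^\ast))$.

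The ``only if'' direction, however, has a genuine gap exactly where you flag the ``main obstacle.'' You correctly reduce to: $v^\ast\leq w$ pointwise on $\cO_{X,x}$ and $\vol(v^\ast)=\vol(w)$ imply $v^\ast=w$. But the strategy you propose for this step does not go through as stated. A Rees-type rigidity theorem compares the Hilbert--Samuel multiplicities of a \emph{single} pair of nested $\fm_x$-primary ideals; here you only know that the \emph{asymptotic} quantities $\lim_m\hs(\fa_m(v^\ast))/m^n$ and $\lim_m\hs(\fa_m(w))/m^n$ coincide, which gives no control on $\hs(\fa_m(v^\ast))-\hs(\fa_m(w))$ for any individual $m$ (these differences could each be positive while their normalized limits agree). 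So the passage from equality of volumes to $\fa_m(v^\ast)=\fa_m(w)$ for every $m$ is precisely the hard content, not a corollary of classical Rees rigidity plus Teissier--Minkowski. The paper does not prove this step either; it cites it as \cite[Proposition 2.12]{LiXu}, where it is a genuine theorem (proved via Izumi-type comparison and Okounkov-body techniques). To complete your proof you should either invoke that result or supply an actual proof of the rigidity statement; as written, the concluding step is a conjecture rather than an argument.
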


\begin{remark}
The above technical statement can be restated as follows. If $v^\ast$ is a normalized volume minimizer, then $v^\ast$ computes $\lct( \ab(v^\ast))$ and $v^\ast$ is the only valuation (up to scaling) that computes $\lct( \ab(v^\ast))$. 

A conjecture of Jonsson and Musta{\c{t}}{\u{a}} states that valuations computing log canonical thresholds of graded sequences on smooth varieties are always quasimonomial \cite[Conjecture B]{JM}. Their conjecture  in the klt case implies \cite[Conjecture 6.1.3]{Li1}, which says that normalized volume minimizers are quasimonomial.
\end{remark}

\begin{proof}
We fix $w\in \Val_{X,x}$ and rescale $w$ so that $w(\ab(v^\ast))=1$. Thus, we are reduced to showing that $A_X(v^\ast) \leq A_X(w)$ and equality holds if and only if $w=v^\ast$.

Definitionally, we have
\[
1=w(\ab(v^\ast)) \coloneqq  \inf_{m\geq 0} \frac{ w(\fa_m(v^\ast))}{m}, \]
and, thus, $w(\fa_m(v^\ast)) \geq m$. The latter implies that $\fa_m(v^\ast) \subseteq \fa_m(w)$, so
\[
\vol(w) \leq \vol(v^\ast).\]

If $A_X(w) < A_X(v^\ast)$, then \[
A_X(w)^n \vol(w) < A_X(v^\ast)^n \vol(v^\ast)\]
and this would contradict our assumption on $v^\ast$. 
Furthermore, if $A(v^\ast) = A(w)$, then we must have that $\vol(v^\ast) = \vol(w)$. Since $v^\ast \leq w$ and $\vol(v^\ast) = \vol(w)$, then $v^\ast = w$ by \cite[Proposition 2.12]{LiXu}.
\end{proof}

\begin{proposition}\label{rational}
Let  $v^\ast \in \Val_{X,x}$  be a minimizer of $\nvol_{X,x}$. If  $v^\ast=\ord_E$, where $E$ is a prime divisor on a normal variety which is proper and birational over $X$, then 
\begin{enumerate}
\item the graded $\cO_X$-algebra $ \cO_X \oplus  \fa_1(v^\ast) \oplus \fa_2(v^\ast) \oplus \cdots $ is finitely generated,
\item the valuation $v^\ast$ corresponds to a Kollar component (See \cite{LiXu}),
\item the number $\nvol(v^\ast)$ is rational.
\end{enumerate}
\end{proposition}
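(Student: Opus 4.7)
My plan: The three conclusions are tightly linked, and all will follow once I exhibit a model realizing $E$ as a Kollár component. The key input is the uniqueness part of Lemma~\ref{lcttech}.

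\smallskip

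\textbf{Step 1: Reformulate the problem.} After rescaling $v^\ast$, Lemma~\ref{self} gives $v^\ast(\fa_\bullet(v^\ast))=1$, and then Lemma~\ref{lcttech} yields $A_X(\ord_E)=\lct(\fa_\bullet(v^\ast))$ and identifies $\ord_E$ as the unique valuation (up to positive scaling) computing this log canonical threshold of a graded sequence.

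\smallskip

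\textbf{Step 2: Extract $E$ as a Kollár component.} Starting from a log resolution on which $E$ appears, I would run a relative MMP over $X$ for a slight perturbation of the pair $\bigl(X,\, (A_X(\ord_E)/m)\,\fa_m(v^\ast)\bigr)$ with $m$ sufficiently divisible. The asymptotic uniqueness of $\ord_E$ as the computing valuation, combined with BCHM and the usual extraction arguments (cf.\ the plt-blowup/Kollár-component theory used in \cite{LiXu}), forces the MMP to contract every exceptional divisor except $E$, producing a proper birational morphism $\pi\colon Y'\to X$ with $E$ the unique exceptional divisor, $-E$ relatively ample over $X$, and $(Y',E)$ plt (any other prime divisor would, after suitable scaling, furnish a second computing valuation and contradict uniqueness). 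This realizes $E$ as a Kollár component in the sense of \cite{LiXu}, giving (2).

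\smallskip

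\textbf{Step 3: Finite generation.} On this model, $\fa_m(\ord_E)=\pi_\ast\cO_{Y'}(-mE)$ for every $m\geq 1$, since both sides parameterize sections of $\cO_X$ vanishing to order $\geq m$ along $E$. Because $-E$ is $\pi$-ample, the relative section ring $\bigoplus_{m\geq 0}\pi_\ast\cO_{Y'}(-mE)$ is a finitely generated $\cO_X$-algebra, which is (1). Pick $N$ with $\fa_{Nk}(v^\ast)=\fa_N(v^\ast)^k$ for all $k\geq 1$; then
\[\vol(v^\ast)=\lim_{k\to\infty}\frac{\hs(\fa_N(v^\ast)^k)}{(Nk)^n}=\frac{\hs(\fa_N(v^\ast))}{N^n}\in\QQ.\]
Since $\ord_E$ is divisorial, $A_X(\ord_E)\in\QQ$ too, so $\nvol(v^\ast)=A_X(\ord_E)^n\vol(v^\ast)\in\QQ$, proving (3).

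\smallskip

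\textbf{Main obstacle.} Step~2 is the crux. The uniqueness of $\ord_E$ provided by Lemma~\ref{lcttech} is asymptotic in nature, holding only for the full graded sequence, so other divisors may compute $\lct$ at each finite truncation. Converting this asymptotic uniqueness into an honest plt extraction of $E$ requires carefully choosing the perturbation so that the resulting $\QQ$-divisor lct problem is accessible to BCHM, and then verifying that the divisors contracted by the MMP are exactly the non-$E$ candidates. This is where the bulk of the technical work lies.
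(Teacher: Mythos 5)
Your Step 1 and Step 3 match the paper's proof almost exactly: the paper likewise starts from Lemma \ref{lcttech} to get $A_X(v^\ast)=\lct(\ab(v^\ast))$ together with uniqueness of the computing valuation, and proves (3) by extracting $N$ with $\fa_{Nk}(v^\ast)=\fa_N(v^\ast)^k$ from finite generation (citing EGA) and reducing to the rationality of $\lct(\fa_N)^n\hs(\fa_N)$; your variant, which computes $\vol(v^\ast)=\hs(\fa_N(v^\ast))/N^n$ directly and uses $A_X(\ord_E)\in\QQ$, is if anything slightly cleaner. The divergence is in how (1) and (2) are obtained, and in which order. The paper deduces (1) first, as a direct consequence of \cite[Theorem 1.4.1]{Blum} (a divisorial valuation computing the lct of a graded sequence has finitely generated associated $\cO_X$-algebra --- this needs only the ``computes'' half of Lemma \ref{lcttech}), and then gets (2) from \cite[Proposition 4.4]{Blum} using the uniqueness half. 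You go the other way: establish the Koll\'ar component (2) by a BCHM/plt-blowup extraction, and then read off (1) from relative ampleness of $-E$ on that model. Your implication (2) $\Rightarrow$ (1) is sound (modulo the usual round-downs when $mE$ is not Cartier), and your route is essentially the one taken in \cite{LiXu}.

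The caveat is that your Step 2, which you correctly identify as the crux, is only a sketch: passing from the asymptotic uniqueness of Lemma \ref{lcttech} to an honest plt extraction of $E$ via an MMP for a perturbed pair is precisely the technical content of the results the paper cites, and you have not supplied it. So as written your proposal re-derives, incompletely, what the paper obtains by citation. This is not a wrong approach --- it is the approach behind the cited theorems --- but to be a complete proof it would need the detailed argument of \cite[Theorem 1.4.1]{Blum} or \cite{LiXu}, in particular the choice of an auxiliary klt pair whose lc places are controlled by the graded sequence rather than by a single truncation $\fa_m$.
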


The previous proposition was independently observed in \cite[Theorem 1.5]{LiXu}. In fact, prior to our contribution, the original draft of \cite{LiXu} proved that if (1) holds then (2) holds. Our argument is different from that of \cite{LiXu}.

\begin{proof} 
By Lemma \ref{lcttech}, it follows that $\lct(\ab(v^\ast)) = A(v^\ast)$.
The finite generation of the desired $\cO_X$-algebra is a consequence of \cite[Theorem 1.4.1]{Blum}. Additionally, the second sentence of Lemma \ref{lcttech} allows us to apply \cite[Proposition 4.4]{Blum}. Thus, $v^\ast$ corresponds to a Kollar component.

To show that $\nvol(v^\ast)$ is rational, we note that the finite generation statement of (1) implies there exists $N>0$ so that
$\fa_{mN} (v^\ast)=( \fa_N(v^\ast))^m$ for all $m\in \NN$ \cite[Lemma II.2.1.6.v]{EGA}. By Lemma \ref{tech}, 
\[
\lct(\ab(v^\ast))^n \hs(\ab(v^\ast))\leq \nvol(v^\ast).\]
 Lemma \ref{details} implies  
 \[
 \lct(\ab(v^\ast))^n \hs(\ab(v^\ast)) = \lct( \fa_N)^n \hs(\fa_N),\] and the latter is a rational number.
\end{proof}

\section{Limit Points of Collections of Graded Sequences of Ideals}\label{seclimits}
In this section we construct a space that parameterizes graded sequences of ideals on a fixed variety $X$. We  use this parameter space to find ``limit points'' of a collection of graded sequences of ideals on $X$. The ideas behind this construction arise from the work of de Fernex-Musta{\c{t}}{\u{a}}  \cite{FM},  Koll{\'a}r \cite{Which}, and de Fernex-Ein-Musta{\c{t}}{\u{a}} \cite{Smooth} \cite{Bound}.

Before explaining our construction, we set our notation. We fix an affine variety $X=\Spec A$, where $A= k[x_1,\ldots,x_r] / \fp$. Let $\varphi$ denote the map
\[
R= k[x_1,\ldots, x_r] \overset{\varphi}{\longrightarrow} A= k[x_1,\ldots,x_r] / \fp
\, .\]
 We set $\fm_R\coloneqq  (x_1,\ldots,x_r)$ and assume that  $\fp \subset \fm_R$. Thus,  $\fm_A= \varphi(\fm_R)$ is a maximal ideal of $A$. 
\subsection{Parameterizing Ideals}

We fix an integer $d>0$ and seek to parameterize   ideals $\fa \subset A$ containing $\fm_A^d$ and contained in $\fm_A$. Since 
$\fm_R^d \subseteq \varphi^{-1}( \fm_A^d)$, such an ideal $\fa\subseteq A$ can be generated by $\fm_A^d$ and  images of polynomials from $R$ of deg $<d$.  Since there are $n_d= \binom{r+d-1}{r}-1$ monomials of positive degree $< d$ in $R$, any such  ideal $\fm_A^d \subseteq \fa \subseteq \fm_A$ can be generated by $\fm_A^d$ and the image of $n_d$ linear combinations of monomials. After setting $N_d = n_d^2$, we get a map 
\[
\left\{ \text{ $k$-valued points of }
\Af^{N_d} \right\} \longrightarrow  
\left\{ \text{ ideals $\fa \subseteq A$ s.t.  $\fm_A^d \subseteq \fa \subseteq \fm_A $}  \right\},
\]
where $\Af^{N_d}$ parameterizes coefficients and generators of such ideals.
The above map is surjective, but not injective (generators of an ideal are not unique). Additionally, we have a universal ideal $\sA \subset \cO_{X\times \Af^{N_d}}$ such that $\sA$ restricted to the fibers of $p:X\times \Af^{N_d}\to \Af^{N_d}$ give us our $\fm_A$-primary ideals. 

The construction in the previous paragraph is not original. The paragraph follows the exposition of \cite[Section 3]{FM}.

\subsection{Parameterizing Graded Sequences of Ideals}\label{param} We proceed to parameterize graded sequences of ideals $\ab$ of $A$ satisfying 
\[
(\dagger) \, \, \text{$\fm_A^m \subseteq \fa_m \subseteq \fm_A $ for all $m\in \NN$.}\] 
We set
\[
H_d  \coloneqq  \Af^{N_1} \times \cdots \times \Af^{N_d},  \]
where $N_i$ is chosen as in the previous section. For $d>c$, let $
\pi_{d,c} : H_d \to H_c$
denote the natural projection maps.
Our desired object is the following projective limit
\[
H =  \varprojlim H_d.
\]
For $d>0$, let $\pi_d : H \to H_d$ denote the natural map. 

Note that the above projective limit exists in the category of schemes, since the maps in our directed system are all affine morphisms. Indeed, $H$ is isomorphic to an infinite-dimensional affine space. 

Since a $k$-valued point of $H$ is simply a sequence of $k$-valued points of $\Af^{N_d}$ for all $d\in \NN$, we have a surjection
\[
\left\{ \text{$k$-valued points of } H \right\} \longrightarrow  
\left\{ \text{sequences of ideals $\fb_\bullet$ of $A$ satisfying $(\dagger)$}  \right\}
.\]
Note that the sequences of ideals on the right hand side are not necessarily graded.

Given a sequence of ideals $\fb_\bullet$, we can construct a graded sequence $\fa_\bullet$ inductively by setting $\fa_1 \coloneqq  \fb_1$ and 
\[
\fa_q \coloneqq  \fb_q + \sum_{m+n=q} \fa_m \cdot \fa_n.\] If $\fb_\bullet$ was graded to begin with, then $\ab= \fb_\bullet$.
By the construction, it is clear that $\fa_m \cdot \fa_n \subseteq \fa_{m+n}$. 
Thus, we have our desired map 
\[
\left\{ \text{$k$-valued points of } H \right\} \longrightarrow  
\left\{ \text{graded sequences of ideals $\ab$  of $A$ satisfying $(\dagger)$}  \right\}
.\]
Additionally, we have a universal graded sequence of ideals $\sA_\bullet = \{ \sA_{m}\}_{m\in \NN}$ on $X\times H$. We will often abuse notation and refer to similarly defined ideals $\sA_1,\ldots, \sA_d$ on $X\times H_d$. 

The following technical lemma will be useful in the next proposition. The proof of the lemma relies on the fact that every descending sequence of non-empty constructible subsets of a variety over an uncountable field has nonempty intersection. 
\begin{lemma}\label{exist}
If $\{W_d \}_{d\in \NN}$ is a collection of nonempty subsets of $H_d$ such that 
\begin{enumerate}
\item $ W_d \subset H_d$ is a constructible, and 
\item 
\[
W_{d+1} \subseteq 
\pi_{d+1,d}^{-1} (W_d)
\]
for each $d\in \ZZ_{>0}$,
\end{enumerate}
 then there exists a $k$-valued point  in 
 \[
 \bigcap
 _{d\in \NN}  \pi_{d}^{-1} \left( W_d \right) \] 
\end{lemma}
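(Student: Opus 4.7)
The key observation is that a $k$-valued point of $H=\varprojlim H_d$ is precisely a compatible system $(p_d)_{d\geq 1}$ with $p_d\in H_d(k)$ and $\pi_{d+1,d}(p_{d+1})=p_d$, and such a system lies in $\bigcap_d \pi_d^{-1}(W_d)$ if and only if $p_d\in W_d$ for every $d$. So the plan is to construct such a compatible sequence by induction on $d$, using at each stage the stated fact that a descending sequence of nonempty constructible subsets of a variety over an uncountable algebraically closed field has nonempty intersection (of $k$-points). Chevalley's theorem, which ensures that the image of a constructible set under a morphism of varieties is again constructible, will be the other essential tool.

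For the base step, I would consider the sets $U_d := \pi_{d,1}(W_d)\subseteq H_1$. By Chevalley these are constructible. Since $W_{d+1}\subseteq \pi_{d+1,d}^{-1}(W_d)$ gives $\pi_{d+1,d}(W_{d+1})\subseteq W_d$, applying $\pi_{d,1}$ yields $U_{d+1}\subseteq U_d$, so $\{U_d\}_{d\geq 1}$ is a descending family of nonempty constructible subsets of the (finite-dimensional) variety $H_1$. By the cited fact there is a $k$-valued point $p_1\in\bigcap_d U_d$. Then the sets $Z_d^{(1)}:=W_d\cap \pi_{d,1}^{-1}(p_1)$ are nonempty (because $p_1\in U_d$) and constructible for all $d\geq 1$, and they satisfy $\pi_{d+1,d}(Z_{d+1}^{(1)})\subseteq Z_d^{(1)}$ by the same reasoning as before combined with $\pi_{d,1}\circ\pi_{d+1,d}=\pi_{d+1,1}$.

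Inductively, suppose compatible $k$-points $p_1,\dots,p_m$ have been chosen with the property that $Z_d^{(m)}:=W_d\cap \pi_{d,m}^{-1}(p_m)$ is nonempty and constructible for every $d\geq m$, and that these sets satisfy $\pi_{d+1,d}(Z_{d+1}^{(m)})\subseteq Z_d^{(m)}$. Apply Chevalley to form $U_d^{(m+1)}:=\pi_{d,m+1}(Z_d^{(m)})\subseteq H_{m+1}$ for $d\geq m+1$; these are nonempty, constructible, and descending in $d$. By the cited fact applied to the variety $H_{m+1}$, their intersection contains a $k$-point $p_{m+1}$. Any such $p_{m+1}$ automatically satisfies $\pi_{m+1,m}(p_{m+1})=p_m$ (since every element of $U_d^{(m+1)}$ does), so compatibility is preserved, and the sets $Z_d^{(m+1)}:=Z_d^{(m)}\cap \pi_{d,m+1}^{-1}(p_{m+1})$ are nonempty for all $d\geq m+1$, completing the inductive step.

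The resulting compatible sequence $(p_d)_{d\geq 1}$ defines a $k$-valued point of $H$ lying in every $\pi_d^{-1}(W_d)$, as required. The only substantive point is ensuring at each induction step both nonemptiness of the images (immediate from nonemptiness of $Z_d^{(m)}$) and the descending property (formal from the commutativity of the projections); neither is a genuine obstacle once the inductive framework is set up. The whole argument is essentially a diagonal lifting argument of the sort standard in generic-limit constructions, and the real content is pushed entirely into the cited uncountability fact.
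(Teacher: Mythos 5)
Your proposal is correct and follows essentially the same argument as the paper: iteratively choose the coordinate $p_d$ by applying the uncountable-field fact to the descending chain of Chevalley images of the $W_e$ (for $e\geq d$) restricted over the previously chosen point, with compatibility of the projections guaranteeing nonemptiness at each stage. The only difference is cosmetic — you intersect with the fiber before projecting, while the paper projects first and then intersects with the fiber — so there is nothing further to add.
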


\begin{proof}
Note that a $k$-valued point in the above intersection  is equivalent to  a sequence of closed points
$\{x_d \in W_d\}_{d\in \NN}$ such that $\pi_{d+1,d}(x_{d+1})=x_d$. We proceed to construct such a sequence. 

We first look to find a candidate for $x_1$. Assumption (2) implies 
\[
W_1 \supseteq  \pi_{2,1}(W_2) \supseteq \pi_{3,1}(W_3) \supseteq \cdots 
\]
is a descending sequence of nonempty sets.
Note that $W_1$ is constructible by assumption and so are $\pi_{d,1}(W_d)$ for all $d$ by Chevalley's Theorem \cite[Exercise II.3.9]{Har}. Thus, 
\[
 W_1 \cap \pi_{2,1}(W_2) \cap \pi_{3,1}(W_3) \cap \cdots
 \]
 is non-empty and we may choose a point $x_1$ lying in the set. 
 
 Next, we look at 
 \[
 W_2 \cap  \pi_{2,1}^{-1}(x_1)  \supseteq   \pi_{3,2}(W_3)  \cap \pi_{2,1}^{-1}(x_1)  \supseteq \pi_{4,2}(W_4) \cap \pi_{2,1}^{-1}(x_1),
\]
and note that for $d\geq 2$ each $ \pi_{d,2}(W_d)  \cap \pi_{2,1}^{-1}(x_1)$  is nonempty by our choice of $x_1$. By the same argument as before, we see
 \[
\pi_{2,1}^{-1}(x_1)   \cap  W_2 \cap  \pi_{3,2}(W_3)  \cap \pi_{4,2}(W_4)  \cap \cdots
\]
is non-empty and contains a closed point $x_2$. 
Continuing in this manner, we  construct the desired sequence.  
\end{proof}

\subsection{Finding Limit Points}

The proof of the following proposition relies on the previous construction of a space that parameterizes graded sequences of ideals. The proof is inspired by arguments in \cite{Which} and \cite{Smooth} \cite{Bound}.

\begin{proposition}\label{limits}
Let $X$ be a klt variety and $x\in X$ a closed point. Assume there exists a collection of graded sequences of $\fm_x$-primary ideals  $\{\ab^{(i)}\}_{i\in \NN}$   and $\la\in \RR$ such that the following hold:
\begin{enumerate}
\item (Convergence from Above) For every $\epsilon >0$, there exists positive constants $M,N$ so that 
\[
\lct( \fa_{m}^{(i)}) ^n \hs( \fa_m^{(i)}) \leq \la + \epsilon 
\]
for all $m\geq M$ and $i\geq N$.
\item (Boundedness from Below)  For each $m,i \in \NN$, we have  
\[
\fm_x^m \subseteq \fa_m^{(i)}.\]
\item (Boundedness from Above) There exists $\delta >0$ such that 
\[
\fa_m^{(i)} \subseteq \fm^{ \lfloor m \delta \rfloor}\]
for all $m,i\in \NN$. 
\end{enumerate}
Then, there exists a graded sequence of $\fm_x$-primary ideals $\tilde{\fa}_\bullet$ on $X$ such that 
\[
\lct( \tilde{\fa}_\bullet)^n \hs(\tilde{\fa}_\bullet) \leq \la .\]
\end{proposition}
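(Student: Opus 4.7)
The plan is to use the parameter space $H$ of Section \ref{param} together with Lemma \ref{exist} to produce $\tilde{\fa}_\bullet$ as a ``limit point'' of $\{\ab^{(i)}\}_{i \in \NN}$. After replacing $X$ by an affine neighborhood of $x$, I would embed $X = \Spec A$ with $A = k[x_1,\ldots,x_r]/\fp$ so that $\fm_x = \fm_A$. Condition (2) guarantees each $\ab^{(i)}$ lies in the ``parameterizable region,'' so after choosing generators modulo $\fm_A^m$, each $\ab^{(i)}$ determines a $k$-valued point $p_i \in H$ with $\sA_m|_{q_{i,d}} = \fa_m^{(i)}$ for $m \leq d$, where $q_{i,d} \coloneqq \pi_d(p_i) \in H_d$.

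Next, fix $\epsilon_k = 1/k$ and, by (1), choose strictly increasing sequences $M_k, N_k$ so that $\lct(\fa_m^{(i)})^n \hs(\fa_m^{(i)}) \leq \lambda + \epsilon_k$ whenever $m \geq M_k$ and $i \geq N_k$. For each $d$ let $k(d)$ be the largest $k$ with $M_k \leq d$ (and $k(d) = 0$ otherwise). Define $W_d \subseteq H_d$ to be the set of $t$ satisfying
\begin{enumerate}[label=(\roman*)]
\item $\sA_m|_t \cdot \sA_n|_t \subseteq \sA_{m+n}|_t$ for all $m, n$ with $m+n \leq d$;
\item $\sA_m|_t \subseteq \fm_A^{\lfloor m\delta \rfloor}$ for all $1 \leq m \leq d$;
\item $\lct(\sA_m|_t)^n \hs(\sA_m|_t) \leq \lambda + \epsilon_k$ for all $k \leq k(d)$ and $M_k \leq m \leq d$.
\end{enumerate}
Conditions (i) and (ii) are closed (they are polynomial/linear conditions on the coefficient coordinates of $H_d$), and (iii) is constructible by the results on $\lct$ and $\hs$ in families collected in Appendix \ref{secmult}; hence each $W_d$ is constructible. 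Moreover, $q_{i,d} \in W_d$ for every $i \geq N_{k(d)}$ (using (3) to verify (ii) and the choice of $M_k, N_k$ to verify (iii)), so $W_d$ is nonempty, and the nesting $W_{d+1} \subseteq \pi_{d+1,d}^{-1}(W_d)$ is immediate since the universal ideals are compatible under projection and $k(d) \leq k(d+1)$.

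Applying Lemma \ref{exist} now yields a $k$-valued point $\tilde p \in \bigcap_d \pi_d^{-1}(W_d)$. Condition (i) at $\tilde p$ shows that the ideals $\tilde{\fa}_m \coloneqq \sA_m|_{\tilde p}$ already form a graded sequence (so the graded-closure construction of Section \ref{param} returns $\tilde{\fa}_\bullet$ itself), and condition (ii), together with the automatic containment $\fm_x^m \subseteq \sA_m|_{\tilde p}$ from the parameterization, ensures each $\tilde{\fa}_m$ is $\fm_x$-primary with $\tilde{\fa}_m \subseteq \fm_x^{\lfloor m\delta\rfloor}$. The latter containment and the remark following Lemma \ref{details} then give $\lct(\tilde{\fa}_\bullet)^n \hs(\tilde{\fa}_\bullet) < +\infty$, so Lemma \ref{details}(1) applies. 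For any $\epsilon > 0$ choose $k$ with $\epsilon_k < \epsilon$; for $m \geq M_k$ condition (iii) yields $\lct(\tilde{\fa}_m)^n \hs(\tilde{\fa}_m) \leq \lambda + \epsilon_k$, and passing to the limit in $m$ and then letting $\epsilon \to 0$ gives $\lct(\tilde{\fa}_\bullet)^n \hs(\tilde{\fa}_\bullet) \leq \lambda$.

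The main technical obstacle is the constructibility of condition (iii): one must verify that both $\lct$ and $\hs$ vary constructibly in the family $\sA_m$ over $H_d$, which is precisely the content of the family-of-ideals results developed in Appendix \ref{secmult}. Once that input is granted, the remainder of the argument is bookkeeping with Lemma \ref{exist}, and the graded condition (i) is what prevents the graded-closure of Section \ref{param} from inflating the limit ideals beyond the size bound (ii).
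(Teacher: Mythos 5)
Your argument is correct and rests on the same pillars as the paper's proof (the parameter spaces $H_d$ and $H$, Lemma \ref{exist}, the family results of Appendix \ref{secmult}, and Lemma \ref{details}), but the middle of the argument is organized differently. The paper never asserts that the normalized multiplicity is a constructible function on $H_d$; instead it extracts nested infinite index sets $I_1\supset I_2\supset\cdots$, forms the closures $Z_d$ of the corresponding point sets with the property that no proper closed subset of $Z_d$ captures infinitely many of them, and applies Propositions \ref{cmult} and \ref{clct} only to get a dense open $U_d\subseteq Z_d$ on which $\lct(\sA_d|_p)^n\hs(\sA_d|_p)$ is constant; the sets fed into Lemma \ref{exist} are then built from these $U_d$. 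You instead define $W_d$ directly as the sublevel locus of the normalized multiplicity intersected with the gradedness and containment conditions. This is cleaner and avoids the diagonal extraction, but it shifts the burden onto the constructibility of your condition (iii): Propositions \ref{cmult} and \ref{clct} as stated only give generic constancy over an irreducible base, so you must upgrade them, by the standard Noetherian induction over irreducible closed subsets of $H_d$, to the statement that $t\mapsto\lct(\sA_m|_t)^n\hs(\sA_m|_t)$ takes finitely many values on a constructible stratification. That step is routine but should be said explicitly, since it is exactly what the paper's $(I_d,Z_d,U_d)$ bookkeeping is designed to circumvent. One small correction: your condition (i), the containment $\sA_m|_t\cdot\sA_n|_t\subseteq\sA_{m+n}|_t$, is a rank condition comparing two families of subspaces of $A/\fm_A^{m+n}$ and is in general only constructible, not closed (the span of the generators of $\sA_{m+n}|_t$ can drop as $t$ varies); this does not affect your argument, since Lemma \ref{exist} only requires constructibility.
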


\begin{proof}
It is sufficient to consider the case when $X$ is affine. Thus, we may assume that $X= \Spec A$ and $A= k[x_1,\ldots,x_r]/\fp$ as in the beginning of this section. Additionally we may assume that $x\in X$ corresponds to the maximal ideal $\fm_A$. 
We recall that Section \ref{param} constructs a variety $H$ parameterizing graded sequences of ideals on $X$ satisfying ($\dagger$). Additionally, we have finite dimensional truncations $H_d$ that parameterize the first $d$ elements of such a sequence.

Each graded sequence $\ab^{(i)}$ satisfies $(\dagger)$ by assumption (2) and (3). Thus, we may choose a point $p_i \in H$ corresponding to $\ab^{(i)}$. Note that $\pi_d(p_i)\in H_d$ corresponds to the first $d$-terms of $\ab^{(i)}$. 
\\

\noindent {\bf Claim 1:} We may choose infinite subsets $\NN \supset I_1 \supset I_2 \supset \cdots $
and set 
\[
Z_d \coloneqq  \overline{ \{ \pi_d(p_i)  \vert i \in I_d \}} \]
such that ($\ast \ast$) holds. 
\begin{center} 
($\ast \ast$)
If $Y\subsetneq Z_d$ is a closed set, there are only finitely many $i \in I_d$ such that $\pi_d(p_i) \in Y$. 
\end{center} 

To prove Claim 1, we construct such a sequence inductively. First, we set $I_1 = \NN$. Since $H_1\simeq \Af^0$ is a point, ($\ast \ast$) is trivially satisfied for $d=1$. 
After having chosen $I_d$, choose $I_{d+1} \subset I_d$ so that ($\ast \ast$) is satisfied for $Z_{d+1}$.  By the Noethereanity of $H_d$, such a choice is possible. \\

\noindent {\bf Claim 2:} 
We have the inclusion $Z_{d+1} \subseteq \pi_{d+1,d}^{-1}(Z_d)$ for all $d\geq 1$.

The proof of Claim 2 follows from the definition of $Z_d$. Since $\pi_d (p_i) \in Z_d$ for all $i\in I_d$ and $I_d \supseteq I_{d+1}$, it follows that $\pi_{d+1,d}^{-1}(Z_d)$ is a closed set containing $\pi_{d+1}(p_i)$ for $i \in I_{d+1}$. The closure of the latter set of points is precisely $Z_{d+1}$. \\

\noindent {\bf Claim 3}
If $p \in Z_d$ is a closed point, we have $\sA_d\vert_p \subseteq \fm^{ \lfloor d \delta \rfloor}$. 

We now prove Claim 3. The set
$\{p \in H_d  \,\vert \, \cA_d\vert_p \subseteq \fm^{ \lfloor d \delta \rfloor}\}$
is a closed in $H_d$. By assumption 3,  $\pi_d(p_i)$ lies in the above closed set for all $i \in I_d$.
Thus, $Z_d \subseteq \{p \in H_d  \,\vert \, \cA_d\vert_p \subseteq \fm^{ \lfloor d \delta \rfloor}\}$ \\

We now return to the proof of the proposition.We look at the normalized multiplicity of the ideals parameterized by $Z_d$.
By Propositions \ref{cmult} and \ref{clct}, for each $d$, we may choose a nonempty open set  $U_d\subseteq Z_d$ such that 
\[
\lct(\sA_{d}\vert_p)^n
\hs( \sA_{d}\vert_p)= \la_d
\]
is constant for $p \in U_d$. 
Set 
\[
I_d^\circ = \{ i \in I_d \, \vert \, \pi_{d}(p_i) \in U_d \} \subseteq I_d,\]
 and note that $I_d \setminus I_d^\circ$ is finite. If this was not the case, then ($\ast \ast$) would not  hold.
 
The finiteness of $I_d \setminus I_d^\circ$ has two consequences. First, 
\[
\lim_{d\to \infty} \sup \la_d \leq \la,
\]
since $\pi_d(p_i) \in U_d$ for all $i\in I_d^\circ$ and assumption (1) of this proposition. 
Second, since $I_{d+1} \subset I_d$ for $d\in \NN$, we have 
\[
I_d^\circ \cap I_{d-1}^\circ \cdots \cap I_1^\circ\neq \emptyset.\]

\noindent {\bf Claim 4}: There exists  a $k$-valued point $\tilde{p} \in H$ such that that $\pi_d(\tilde{p}) \in U_d$ for all $d\in \ZZ_{>0}$.  

Proving this claim will complete the proof. Indeed, a point $\tilde{p}\in H$ corresponds to a graded sequence of $\fm_x$-primary ideals $\tilde{\fa}_\bullet$.
Since $\pi_d(\tilde{p})\in U_d$, we will have $
\lct(\tilde{\fa}_d)^n \hs(\tilde{\fa}_d) = \la_d$. Additionally, Claim 2 implies  and
$
\tilde{\fa}_d \subset \fm_x^{ \lfloor d \delta \rfloor }$
for all $d\in \ZZ_{>0}$. 
Thus,
\[
\lct(\tilde{\fa}_\bullet)^n \hs(\tilde{\fa}_\bullet) = \lim_{d \to \infty} \lct ( \tilde{\fa}_d ) \hs(\tilde{\fa}_d) \leq \lim_{d \to \infty} \sup \la_d \leq \la, 
\]
and the proof will be complete. \\

We are left to prove Claim 4. In order to do so, we will apply Lemma \ref{exist} to find such a point $\tilde{p}\in H$. First, we define constructible sets $W_d\subseteq H_d$ inductively. Set $W_1=U_1$. After having chosen $W_d$, set $W_{d+1} = \pi^{-1}_{d+1,d}(W_d) \cap U_d$. For each $d\in \NN$  we have the following :
 \begin{itemize}
 \item $W_d$ is open in $Z_d$ and, thus, constructible in $H_d$. 
 \item  $W_d$ is nonempty, since $W_d$ contains $\pi_d(p_i)$ for all $i \in I_d^\circ \cap I_{d-1}^\circ \cap \cdots \cap I_1^\circ$, which is nonempty. 
 \end{itemize}
 By Lemma \ref{exist}, there exists a $k$-valued point $\tilde{p}\in H$ such that $\pi_d(\tilde{p})\in W_d \subset U_d$ for all 
$d\in \ZZ_{>0}$. This completes the proof.
\end{proof}

\begin{remark}
In the previous proof, we construct a graded sequence of ideal $\tilde{\fa}_\bullet$ based on a collection of graded sequences $\{\ab^{(i)}\}_{i \in \NN}$. While the construction of $\tilde{\fa}_\bullet$ is inspired by past constructions of generic limits, $\tilde{\fa}_\bullet$ is not a generic limit in the sense of \cite[28]{Which}.

We construct the precise analog as follows.
We set
\[
Z: = \bigcap_{d} \pi^{-1}_d (Z_d) \subseteq H,\]
with the $Z_d$'s defined in the previous proof.
The generic point of $Z$ gives a map $\Spec(K(Z)) \to H$, where $K(Z)$ is the function field of $Z$. Thus, we get a graded sequence of ideals $\widehat{\fa}_\bullet$ on $X_{K(Z)}$, the base change of $X$ by $K(Z)$. 

In the previous proof, we wanted to construct a graded sequence on $X$, not a base change of $X$. 
Thus, $\tilde{\fa}_\bullet$ was chosen to be a graded sequence  corresponding to a very general point in $Z$. 
\end{remark} 

\section{Proof of the Main Theorem}\label{secmain}
In this section we prove  the {\hyperref[a]{Main Theorem}}. To prove the theorem, we apply the construction from Section \ref{seclimits}.

\begin{proof}[Proof of the {\hyperref[a]{Main Theorem}}]

We fix a klt variety $X$ and a closed point $x\in X$.
Next, we choose a sequence of valuations $\{v_i\}_{i\in \NN}$ in $\Val_{X,x}$ such that 
\[
 \lim_{i} \nvol(v_i) = \inf_{v \in \Val_{X,x}} \nvol(v).\] Additionally, after scaling our valuations, we may assume that $v_i(\fm_x)=1$ for all $i\in \NN$. Note that this implies that $
 \fm_x^m \subset \fa_m(v_i)
 $
 for all $m$. 
 
  We claim that $\{ \ab(v_i) \}_{i \in \NN}$ satisfy the hypotheses of Proposition \ref{limits} with $\la = \inf_{v\in \Val_{X,x}} \nvol(v)$. 
 After showing that this is the case, we will have that there exists a graded sequence of $\fm_x$-primary ideals  $\tilde{\fa}_\bullet$ such that 
\[
\lct(\tilde{\fa}_\bullet)^n \hs(\tilde{\fa}_\bullet) \leq \inf_{v\in \Val_{X,x} } \nvol(v).\]
By Theorem \ref{computingtheorem}, there exists a valuation $v^\ast\in \Val_{X,x}$ that computes $\lct(\tilde{\fa}_\bullet)$. Thus, 
\[
\nvol(v^\ast)\leq \lct(\tilde{\fa}_\bullet)^n \hs(\tilde{\fa}_\bullet) = \inf_{v\in \Val_{X,x}} \nvol(v),\]
where the first inequality follows from Lemma \ref{tech}. Thus, $v^\ast$ will be our normalized volume minimizer.

It is left to show that $\{ \ab(v_i)\}_{i \in \NN}$ satisfies the hypotheses of Proposition \ref{limits}. Hypothesis (1) follows from Proposition \ref{boundbelow},  (2) from the assumption that $v_i(\fm)=1$ for all $i\in \NN$, and (3)  from Proposition \ref{convabove}.
\end{proof}

We proceed to prove the two propositions mentioned in the previous paragraph. We emphasize that estimates from  \cite{Li1} are essential in the proof of the following lemma and propositions.

 \begin{lemma}\label{bound}
 With the notation above, 
there exist positive constants $E,B$ such that (1) $A_X(v_i) \leq E$ and (2) $\vol(v_i) \leq B$ for all $i \in \NN$.  
 \end{lemma}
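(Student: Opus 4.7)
The plan is to prove the two bounds separately. The bound on $\vol(v_i)$ follows directly from the normalization $v_i(\fm_x)=1$, while the bound on $A_X(v_i)$ combines the boundedness of $\nvol(v_i)$ with an Izumi-type estimate on the klt singularity $(X,x)$.

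For the volume bound, the normalization gives $v_i(f) \geq m$ for every $f \in \fm_x^m$, so $\fm_x^m \subseteq \fa_m(v_i)$ for all $m \in \NN$. Taking lengths and dividing by $m^n$,
\[
\frac{\length(\cO_{X,x}/\fa_m(v_i))}{m^n} \leq \frac{\length(\cO_{X,x}/\fm_x^m)}{m^n},
\]
and passing to the limit yields $\vol(v_i) \leq \hs(\fm_x)/n!$ uniformly in $i$. One may therefore take $B = \hs(\fm_x)/n!$.

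For the log discrepancy bound, the convergence $\nvol(v_i) \to \la < \infty$ provides a constant $C$ with $A_X(v_i)^n \vol(v_i) = \nvol(v_i) \leq C$ for all $i$. To extract an upper bound on $A_X(v_i)$, I would invoke a refined Izumi-type inequality on the klt singularity (see \cite{Li1}): there is a constant $c=c(X,x)>0$ such that
\[
\vol(v) \geq \frac{c}{A_X(v)^{n-1}} \quad \text{for every } v \in \Val_{X,x} \text{ with } v(\fm_x)=1.
\]
Substituting into $\nvol(v_i) \leq C$ gives $c \cdot A_X(v_i) \leq A_X(v_i)^n \vol(v_i) \leq C$, hence $A_X(v_i) \leq C/c =: E$.

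The main obstacle is the refined lower bound on $\vol$ in Step 2. The naive form of Izumi on a klt singularity, $v(f) \leq c' A_X(v) \ord_x(f)$, only yields the weaker bound $\vol(v) \geq c''/A_X(v)^n$ (via the inclusion $\fa_m(v) \subseteq \fm_x^{\lfloor m/(c' A_X(v))\rfloor}$ and the asymptotic $\length(\cO_{X,x}/\fm_x^k) \sim \hs(\fm_x)k^n/n!$). This weaker bound is sufficient to control $\nvol$ from below by a positive constant but is \emph{not} strong enough to rule out a degenerate minimizing sequence with $A_X(v_i)\to\infty$ and $\vol(v_i)\to 0$. The sharper exponent $n-1$ in the denominator is what makes the bound on $A_X(v_i)$ possible, and this refinement is the substantive content one must extract from Li's estimates.
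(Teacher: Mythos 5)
Your proposal is correct and follows essentially the same route as the paper: the volume bound via $\fm_x^m \subseteq \fa_m(v_i)$ is identical, and your ``refined Izumi-type inequality'' $\vol(v) \geq c/A_X(v)^{n-1}$ for $v(\fm_x)=1$ is exactly a rearrangement of the estimate the paper cites, namely \cite[Theorem 3.9]{Li1}: $A_X(v) \leq C\, v(\fm_x)\, \nvol(v)$. You correctly diagnose that the naive Izumi bound (exponent $n$) would not suffice and that this sharper form is the substantive input; the paper simply takes it as a black box from Li, as you do.
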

 
 \begin{proof}
 By \cite[Theorem 3.9]{Li1}, there exists a constant $C$ such that 
 \[
 A_X(v) \leq C \cdot v(\fm) \nvol(v)\]
 for all $v\in \Val_{X,x}$. Thus, we may set $E\coloneqq  C \cdot \sup_{i} \nvol(v_i) < +\infty$

 The bound on the volume follows from the inclusion $\fm_x^m \subset \fa_m(v_i)$ for all $m\in \NN$. The inclusion gives that 
 \[
 \vol(v_i) = \lim_{m \to \infty} \frac{ \hs (\fa_m(v_i))}{m^n} \leq   \lim_{m \to \infty} \frac{ \hs (\fm_x^n)}{m^n}= \hs(\fm_x). \]
 \end{proof}
 
 \begin{proposition} \label{convabove} With the notation above, there exists $\delta>0$ such that 
 \[
 \fa_m(v_i) \subseteq \fm_x^{\lfloor \delta m \rfloor}\]
 for all $m,i\in \NN$. 
 
 \end{proposition}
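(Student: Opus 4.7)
The plan is to reduce Proposition \ref{convabove} to a uniform Izumi-type estimate. Write $\ord_x(f) := \sup\{k \in \NN : f \in \fm_x^k\}$, so that the desired inclusion $\fa_m(v_i) \subseteq \fm_x^{\lfloor \delta m \rfloor}$ is equivalent to the pointwise bound
\[
v_i(f) \leq \tfrac{1}{\delta} \cdot \ord_x(f) \qquad \text{for all } f \in \cO_{X,x},\; i \in \NN.
\]
Hence it suffices to bound the Izumi ratio $\sup_{f} v_i(f)/\ord_x(f)$ uniformly in $i$.

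For this I would invoke a quantitative Izumi inequality on the klt germ $(X,x)$, in the same family of estimates used in the proof of Lemma \ref{bound} and provided by \cite[\S3]{Li1}: there exists a constant $C = C(X,x) > 0$, depending only on the germ, such that
\[
v(f) \leq C \cdot A_X(v) \cdot \ord_x(f)
\]
for every $v \in \Val_{X,x}$ with $v(\fm_x) = 1$ and every $f \in \cO_{X,x}$. In the smooth case this reduces, by quasi-monomial approximation, to the transparent coordinate bound $v(f) \leq (\sum a_i) \cdot \ord_x(f) = A_X(v)\cdot \ord_x(f)$; the klt case follows after passing to a fixed log resolution of $\fm_x$ and extracting a multiplicative constant from the positive log discrepancies of the exceptional divisors over $x$.

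Combining this Izumi bound with Lemma \ref{bound} (which gives $A_X(v_i) \leq E$) and the standing normalization $v_i(\fm_x) = 1$ yields $v_i(f) \leq C E \cdot \ord_x(f)$ uniformly in $i$. Setting $\delta := 1/(CE)$, if $f \in \fa_m(v_i)$ then $v_i(f) \geq m$, so $\ord_x(f) \geq m/(CE) = \delta m$, and therefore $f \in \fm_x^{\lfloor \delta m \rfloor}$, as required.

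The main obstacle is securing the \emph{uniform} Izumi constant $C(X,x)$ on a klt singularity. On smooth germs the bound is straightforward, but on a general klt singularity one must extract $C(X,x)$ from a chosen log resolution of $\fm_x$, controlling the minimal log discrepancy among the exceptional divisors centered over $x$. This is precisely where the refined estimates of \cite[\S3]{Li1} are essential; everything else in the argument is a direct combinatorial consequence of $A_X(v_i) \leq E$ and the normalization $v_i(\fm_x) = 1$.
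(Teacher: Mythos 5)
Your proof is correct and takes essentially the same route as the paper's: the paper likewise reduces the statement to the uniform Izumi-type bound $v(f)\leq C\cdot A_X(v)\cdot \ord_x(f)$, which it cites directly as \cite[Proposition 2.3]{Li1}, and then combines it with $A_X(v_i)\leq E$ from Lemma \ref{bound} to get $\ord_x(\fa_m(v_i))\geq m/(CE)$. The only difference is cosmetic: the paper does not reprove the Izumi estimate (and the inequality is scale-invariant, so your normalization $v(\fm_x)=1$ is not actually needed for it).
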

 
 \begin{remark}
 Note that for an ideal $\fa$, the order of vanishing of $\fa$ along $x$ is defined to be
 \[
 \ord_x(\fa) \coloneqq  \max \{ n \,\vert\, \fa \subseteq \fm_x^n \}.\]
 To prove the above proposition, it is sufficient to find $\delta'>0$ such that
 \[
\delta' m \leq  \ord_x(\fa_m(v_i))  \]
 for all $m,i \in \NN$. 
 \end{remark}

 \begin{proof}
 
 By \cite[Proposition 2.3]{Li1}, there exists a constant $C$ such that for all $v\in \Val_{X,x}$ and $f\in \cO_{X,x}$, 
 \[
 v(f)\leq C\cdot  A_X(v) \ord_x(f).\]
 Thus, 
 \[
  m \leq v_i(\fa_m(v_i)) \leq C \cdot A_X(v_i) \ord_x( \fa_m(v_i)),\]
  and 
  \[
  \frac{m}{C A_X(v_i)} \leq \ord_x(\fa_m(v_i))
  .\]
  By Lemma \ref{bound}, there exists a positive constant $E$ such that $A_X(v_i) \leq E$ for all $i$. Thus, 
    \[
  \frac{m}{C \cdot E } \leq \ord_x(\fa_m(v_i))
  .\]
 \end{proof}

 \begin{proposition} \label{boundbelow}
 With the notation above, 
 for $\epsilon >0$, there exist positive constants $M,N$ such that 
 \[
 \lct( \fa_m(v_i))^n \hs(\fa_m(v_i)) \leq \inf_{v\in \Val_{X,x}} \nvol(v) + \epsilon .\]
 for all $m\geq M$ and $i \geq N$. 
 \end{proposition}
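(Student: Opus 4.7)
The plan is to combine three ingredients: the upper bound $\lct(\fa_m(v_i)) \leq A_X(v_i)/m$ coming from testing $\lct$ against $v_i$ itself, the uniform rate of convergence $\hs(\fa_m(v_i))/m^n \to \vol(v_i)$ supplied by Proposition \ref{volumebound}, and the uniform bounds on $A_X(v_i)$ and $\vol(v_i)$ from Lemma \ref{bound}.

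First I would observe that, since $\fa_m(v_i) = \{f : v_i(f)\geq m\}$, we have $v_i(\fa_m(v_i)) \geq m$, so testing the log canonical threshold against $v_i$ gives
\[
\lct(\fa_m(v_i)) \leq \frac{A_X(v_i)}{v_i(\fa_m(v_i))} \leq \frac{A_X(v_i)}{m}.
\]
Consequently,
\[
\lct(\fa_m(v_i))^n \hs(\fa_m(v_i)) \leq A_X(v_i)^n \cdot \frac{\hs(\fa_m(v_i))}{m^n}.
\]

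Next I would invoke Proposition \ref{volumebound}. By Lemma \ref{bound} we have $\vol(v_i) \leq B$ and $A_X(v_i) \leq E$, and by construction $v_i(\fm_x) = 1 \geq 1/r$ with $r=1$. Fix $\epsilon' = \epsilon/(2E^n)$. Then Proposition \ref{volumebound} produces an $M = N(\epsilon', B, E, 1)$ such that, uniformly in $i$,
\[
\frac{\hs(\fa_m(v_i))}{m^n} < \vol(v_i) + \epsilon' \quad \text{for all } m \geq M.
\]
Combining with the previous display and using $A_X(v_i) \leq E$,
\[
\lct(\fa_m(v_i))^n \hs(\fa_m(v_i)) < A_X(v_i)^n \vol(v_i) + E^n \epsilon' = \nvol(v_i) + \tfrac{\epsilon}{2}
\]
for all $m \geq M$ and all $i \in \NN$.

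Finally, since $\lim_i \nvol(v_i) = \inf_{v\in \Val_{X,x}}\nvol(v)$, there exists $N$ such that $\nvol(v_i) \leq \inf_{v\in \Val_{X,x}}\nvol(v) + \epsilon/2$ for all $i \geq N$. Putting the two bounds together yields
\[
\lct(\fa_m(v_i))^n \hs(\fa_m(v_i)) \leq \inf_{v\in \Val_{X,x}}\nvol(v) + \epsilon
\]
for all $m \geq M$ and $i \geq N$, as required. The only nontrivial step is the invocation of Proposition \ref{volumebound}; once that uniform rate is available, the rest is essentially a direct combination of the pointwise estimate $\lct(\fa_m(v)) \leq A_X(v)/m$ with the uniform bound on $A_X(v_i)$.
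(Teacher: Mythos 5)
Your proposal is correct and follows essentially the same route as the paper: the pointwise estimate $\lct(\fa_m(v_i))\leq A_X(v_i)/m$, the uniform convergence from Proposition \ref{volumebound} (justified via Lemma \ref{bound} and the normalization $v_i(\fm_x)=1$), and the choice of $N$ from $\nvol(v_i)\to\inf\nvol$. You even make explicit the step $\lct(\fa_m(v_i))\leq A_X(v_i)/v_i(\fa_m(v_i))\leq A_X(v_i)/m$ that the paper leaves implicit.
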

 \begin{proof}
Since $\nvol(v_i)$ converges to $\inf_{v\in \Val_{X,x}} \nvol(v)$ as $i \to \infty$, we may choose $N$ so that
\[
\nvol(v_i) \leq \inf_{v\in \Val_{X,x}}\nvol(v) + \epsilon/2
\]
for all $i\geq N$. By Lemma \ref{bound}, we have $E: = \sup A_X(v_i) < \infty$. Additionally, Lemma \ref{bound} allows us to  apply Proposition \ref{volumebound} to find  a constant $M$ so that 
\[
\hs(\fa_m(v_i)) \leq \vol(v_i) + \epsilon/(2 E^n)
\]
for all integers $m\geq M$. Thus,  
\[
\lct(\fa_m(v_i))^n \hs(\fa_m(v_i)  )\leq A_X(v_i)^n \left( \vol(v_i) + \epsilon/(2 E^n) \right) \leq \nvol(v_i) +\epsilon/2 .\]
 for all $m\geq M$ and $i\in \NN$. We conclude that 
 \[
\lct(\fa_m(v_i)) \hs(\fa_m(v_i))  \leq \nvol(v_i) +\epsilon/2 \leq \inf_{v\in \Val_{X,x} }\nvol(v) + \epsilon \]
for all $m\geq M$ and $i\geq N$.
\end{proof}

\section{The Normalized Volume over a Log Pair}\label{seclog}

The normalized volume function has been studied in the setting of log pairs \cite{LiXu} \cite{LiLiu}. We explain that the arguments in this paper extend to the setting where $(X,\Delta)$ is a klt pair. 

\subsection{Log Pairs}
We say
$(X, \Delta)$ is a \emph{log pair} if $X$ is a normal variety, $\Delta$ is an effective $\QQ$-divisor on $X$, and $K_X+\Delta$ is $\QQ$-Cartier. 

\subsection{Log Discrepancies}
If $(X,\Delta)$ is a log pair,  the \emph{log discrepancy} function $A_{(X,\Delta)} : \Val_{X} \to \RR \cup \{+\infty\}$ is defined as follows.  Let $f:Y\to X$ be a log resolution of $(X, \Delta)$. Choose $\Delta_Y$ so that $K_Y + \Delta_Y = f^\ast (K_X+\Delta)$ where $K_Y$ and $K_X$ are chosen so that $f_\ast K_Y = K_X$. For $v\in \Val_X$, we set 
\[
A_{(X,\Delta)}(v)\coloneqq  A_Y(v)+ v(\Delta_Y).\]

Alternatively, we could define $A_{(X,\Delta)}$ to be the unique lower semicontinous function on $\Val_X$ that respects scaling and satisfies the following property.  If $ E\subset Z\overset{g}{\to} X$ is a proper birational morphism, $Z$ a normal variety, and $E$ a prime divisor on $Z$, then \[ A_{(X,\Delta)}(\ord_E)= 1 +\ord_E( K_Z-g^\ast(K_X+\Delta)).\]

If $(X,\Delta)$ is a log pair, we say $(X,\Delta)$ is a \emph{klt pair} if $A_X(v)> 0$ for all $v\in \Val_X$. It is sufficient to check this condition on a log resolution of $(X,\Delta)$. 

\subsection{Normalized Volume Minimizers}
When $(X,\Delta)$ is klt pair and $x\in X$ is a closed point, the normalized volume of a valuation $v\in \Val_X$ over the pair $(X,\Delta)$ is defined to be
\[
\nvol_{(X, \Delta), x}(v) \coloneqq  A_{(X,\Delta)}(v)^n  \vol(v) . 
\]
We claim that if $(X,\Delta)$ is a klt pair and $x\in X$ is a closed point, then there exists a minimizer of $\nvol_{(X,\Delta), x}$. 

The main subtlety in extending our arguments to the log setting is in extending Theorem \ref{approx}, which is a consequence of the subadditivity theorem. Takagi proved the following subadditivity theorem for log pairs. 

\begin{proposition}\cite{Takpairs}
Let $(X,\Delta)$ be a log pair,  $\fa, \fb$ ideals on $X$, and  $s,t\in \QQ_{\geq 0}$. For $r\in \ZZ_{>0}$ so that $r(K_X+\Delta)$, we have
\[
\Jac_X \cdot \cJ((X,\Delta), \fa^s \fb^s \cO_X(-r\Delta)^{1/r} ) \subseteq \cJ((X, \Delta), \fa^s) \cJ((X,\Delta), \fb^t).
\]
\end{proposition}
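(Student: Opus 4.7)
The proof plan is to adapt the diagonal trick of Demailly--Ein--Lazarsfeld to the log pair setting, using $\Jac_X$ to control the singularities of $X$ as in Takagi's original arguments for the klt (non-pair) case. Set $W = X \times X$ with projections $p_1, p_2 : W \to X$ and diagonal embedding $\delta: X \hookrightarrow W$, and form the product log pair $(W, p_1^{\ast}\Delta + p_2^{\ast}\Delta)$.

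The first step is to establish multiplicativity of multiplier ideals on the product. Choosing log resolutions $f:Y \to X$ and $g: Z \to X$ of $(X, \Delta, \fa)$ and $(X, \Delta, \fb)$ respectively, the morphism $f \times g : Y \times Z \to W$ is a log resolution of $(W, p_1^{\ast}\Delta + p_2^{\ast}\Delta, (p_1^{-1}\fa) \cdot (p_2^{-1}\fb))$; since the relative canonical divisor, the pulled-back boundary, and the divisors of the pulled-back ideals all split as sums of pullbacks from the two factors, pushing forward splits as a tensor product, yielding
\[
\cJ\bigl((W, p_1^{\ast}\Delta + p_2^{\ast}\Delta), (p_1^{-1}\fa)^s (p_2^{-1}\fb)^t\bigr) = p_1^{-1}\cJ((X, \Delta), \fa^s) \cdot p_2^{-1}\cJ((X, \Delta), \fb^t).
\]

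Next I would restrict along $\delta$. Because $\delta^{-1}(p_1^{-1}\fa \cdot p_2^{-1}\fb) = \fa \fb$ and $\delta^{\ast}(p_1^{\ast}\Delta + p_2^{\ast}\Delta) = 2\Delta$, restricting the product multiplier ideal along the diagonal should recover something close to $\cJ((X, \Delta), \fa^s \fb^t \cO_X(-r\Delta)^{1/r})$: the extra $\cO_X(-r\Delta)^{1/r}$ factor absorbs the doubled boundary back to a single $\Delta$, and the integer $r$ appears only to make $\cO_X(-r\Delta)$ an honest invertible sheaf using that $r(K_X+\Delta)$ is Cartier. When $X$ is smooth, $\delta$ is a regular embedding and the classical restriction theorem for multiplier ideals gives the inclusion with no correction. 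When $X$ is singular, $\delta$ fails to be a regular embedding, and the measure of this failure is precisely $\Jac_X = \Fitt_0(\Omega_X)$, which is the source of the Jacobian factor on the left-hand side.

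The main obstacle is making the restriction step rigorous in the singular log setting. Rather than invoking an abstract singular restriction theorem, the cleanest route is to work on a simultaneous log resolution $\pi : \widetilde{W} \to W$ of $(W, p_1^{\ast}\Delta + p_2^{\ast}\Delta + \delta(X) + (p_1^{-1}\fa)(p_2^{-1}\fb))$ and compute the pushforward directly. Comparing $\pi^{\ast}K_W$ with the pullback of $K_Y - f^{\ast}K_X$ along the proper transform of the diagonal produces a factor of $\Jac_X$, since the Fitting-ideal description realizes $\Jac_X$ as the obstruction to $\Omega_X$ being locally free. The $\Delta$-contributions must then be tracked through both the product multiplicativity step and the diagonal restriction step; once this bookkeeping is done, the remaining discrepancy between the restricted product ideal and the desired ideal on $X$ is exactly the $\cO_X(-r\Delta)^{1/r}$ factor, yielding Takagi's asymmetric inclusion.
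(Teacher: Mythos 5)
First, a point of order: the paper does not prove this proposition at all --- it is quoted from Takagi \cite{Takpairs} as a black box (and the statement as printed even carries two typos, $\fa^s\fb^s$ for $\fa^s\fb^t$ and the truncated hypothesis ``so that $r(K_X+\Delta)$'' where ``is Cartier'' is missing, both of which you have silently corrected). So the real question is whether your sketch would actually reprove Takagi's theorem, and there it has a genuine gap.

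Your architecture --- K\"unneth multiplicativity on $X\times X$ followed by restriction to the diagonal --- is the correct skeleton, and it is the one underlying both Demailly--Ein--Lazarsfeld and Takagi. The gap is at the decisive step. Even in the smooth case, the diagonal argument rests on the Restriction Theorem $\cJ(H,\mathfrak{c}|_H)\subseteq \cJ(W,\mathfrak{c})\cdot\cO_H$, which is not a formal pushforward computation but requires local vanishing on a log resolution (via the adjoint-ideal exact sequence). When $X$ is singular, the diagonal $\delta(X)\subset X\times X$ is a normal $\QQ$-Gorenstein subvariety of codimension $n$ that is not a local complete intersection, and what is needed is a restriction theorem with Jacobian correction, $\Jac_X\cdot\cJ\bigl(\delta(X),\dots\bigr)\subseteq \cJ\bigl(X\times X,\dots\bigr)\cdot\cO_{\delta(X)}$, together with the boundary bookkeeping that produces the $\cO_X(-r\Delta)^{1/r}$ factor. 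That restriction statement \emph{is} the content of Takagi's theorem, and your proposal asserts it rather than proves it: the sentence ``comparing $\pi^{\ast}K_W$ with the pullback of $K_Y-f^{\ast}K_X$ along the proper transform of the diagonal produces a factor of $\Jac_X$'' is exactly the claim requiring an argument, and it does not fall out of a direct computation on a simultaneous log resolution of the product (the Jacobian enters through the discrepancy between Mather-type and usual canonical classes, not through a resolution of $W$ alone). Takagi's actual proof proceeds by reduction to characteristic $p>0$ and the Frobenius-theoretic restriction property of test ideals; the only known characteristic-zero route (Eisenstein's thesis, cited as \cite{Eis} in this paper) uses inversion of adjunction for high-codimension adjoint ideals. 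Your sketch would need to import one of these to close the gap.
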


 Takagi's result implies the following generalization of Theorem \ref{approx} for log pairs. The remaining arguments in the paper extend to this setting.
\begin{theorem}
If $(X,\Delta)$ is a klt pair and $v\in \Val_X$ satisfies $A_X(v) < +\infty$, then  
\[
(\Jac_X \cO_X(-r\Delta))^\ell  \cdot \fa_{m}^\ell \subseteq (\Jac_X \cO_X(-r\Delta))^\ell   \cdot \fa_{m\ell} \subseteq \fa_{m-e}^\ell
\]
for every $m\geq e$,
where $\ab : = \ab(v)$ and $e\coloneqq  \lceil A_{(X,\Delta)}(v) \rceil$.
\end{theorem}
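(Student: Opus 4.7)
The plan is to mirror the proof of Theorem \ref{approx} line-for-line in the log setting, with Takagi's subadditivity theorem (as stated in the preceding Proposition) playing the role of Theorem \ref{sub}. The extra factor of $\cO_X(-r\Delta)$ appearing in the statement is exactly the bookkeeping term that Takagi's theorem forces on us each time we apply subadditivity; the rest of the argument is formally identical.

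First I would introduce, for a graded sequence $\ab$ of ideals on the log pair $(X,\Delta)$ and a rational number $c>0$, the asymptotic multiplier ideal $\cJ((X,\Delta), c\cdot \ab)$, defined as the unique maximal element of $\{\cJ((X,\Delta), \tfrac{c}{p}\cdot \fa_{pm})\}_{p\in\NN}$. The verification that this family is increasing (so that the maximum exists by noetherianity) follows exactly as in Lemma \ref{mtech}, since Takagi's subadditivity immediately yields the log analogue of Lemma \ref{mtech} for $\cJ((X,\Delta),\cdot)$. I would then establish the valuative description
\[
\cJ((X,\Delta), c\cdot \ab)(X) = \{f \in \cO_X(X) : v(f) > c\, v(\ab) - A_{(X,\Delta)}(v) \text{ for all } v \in \Val_X\},
\]
which is \cite[Theorem 1.2]{BFFU} applied to the pair $(X,\Delta)$ (the cited result is already stated in that generality).

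The key step is then to iterate Takagi's subadditivity $\ell-1$ times. Applied to the ideals $\fa_{m\ell}^{1/\ell}$ (formally, to $\fa_{m\ell}$ with exponent $1/\ell$ appearing $\ell$ times), this gives
\[
(\Jac_X \cO_X(-r\Delta))^{\ell} \cdot \cJ\!\left((X,\Delta),\,\tfrac{1}{\ell}\cdot \fa_{m\ell}\right)^{\ell}
\subseteq \cJ\!\left((X,\Delta),\, m\cdot \ab\right)^{\ell}\cdot \text{(no extra factors)},
\]
and combined with the containment $\fa_{m\ell} \subseteq \cJ((X,\Delta),\fa_{m\ell})$ (the klt analogue of Lemma \ref{mtech}(1) for pairs) together with $\fa_m^\ell \subseteq \fa_{m\ell}$, one obtains
\[
(\Jac_X \cO_X(-r\Delta))^{\ell}\cdot \fa_m^{\ell} \subseteq (\Jac_X \cO_X(-r\Delta))^{\ell}\cdot \fa_{m\ell} \subseteq \cJ((X,\Delta), m\cdot \ab)^{\ell}.
\]
This is the log analogue of Proposition \ref{comp}.

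Finally, Lemma \ref{self} makes no reference to $\Delta$ and so gives $v(\ab(v))=1$ unchanged. Plugging this into the valuative description from the previous step, and using $A_{(X,\Delta)}(v) \leq e$ by definition of $e$, one gets $\cJ((X,\Delta), m\cdot \ab) \subseteq \fa_{m-e}$ for $m \geq e$. Raising to the $\ell$-th power and concatenating with the inclusion from the subadditivity step yields the theorem. The main obstacle I anticipate is purely bookkeeping: carefully tracking how many copies of $\cO_X(-r\Delta)^{1/r}$ and of $\Jac_X$ appear at each step of the iteration of Takagi's subadditivity, so that the final combined factor is precisely $(\Jac_X \cO_X(-r\Delta))^{\ell}$ and not an over- or under-twisted variant; everything else is a mechanical translation of the proof of Theorem \ref{approx}.
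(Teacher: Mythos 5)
Your proposal follows exactly the route the paper intends: the paper's own ``proof'' is the single remark that Takagi's subadditivity theorem implies the log analogue of Theorem \ref{approx} by the same argument as in the absolute case, and your outline (log analogue of Proposition \ref{comp} via iterated Takagi subadditivity, the valuative description of \cite[Theorem 1.2]{BFFU} for pairs, and Lemma \ref{self} unchanged) is precisely that argument spelled out. The bookkeeping worry you flag is harmless, since iterating subadditivity $\ell-1$ times produces at most $\ell-1$ copies of $\Jac_X\cO_X(-r\Delta)$, and the stated inclusion with $\ell$ copies is weaker.
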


\section{The Toric Setting}\label{sectoric}

   We use the notation of \cite{Fulton} for toric varieties. Let $N$ be a free abelian group of rank $n\geq 1$ and  $M=N^*$ its dual. We write $N_\RR\coloneqq  N \otimes \RR$ and $M_\RR\coloneqq  M \otimes \RR$. There is a canonical pairing
   \[
\langle \, \, , \,\, \rangle : N_\RR \times M_\RR \to \RR. 
\]  
We say that an element $u\in N$ is primitive if $u$ cannot be written as $u= a u'$ for $a\in \ZZ_{>1}$ and $u\in N$. 

Fix a maximal dimension, strongly convex, rational, polyhedral cone $\sigma \subset N_\RR$. From the cone $\sigma$, we get a toric variety $X_\sigma = \Spec R_\sigma$, where $R_\sigma = k [ \sigma^\vee \cap M]$.  Let $x\in X_\sigma$ denote the unique torus invariant point of $X_\sigma$.
We write $u_1,\ldots, u_r\in N$ for the primitive lattice points of $N$ that generate the 1-dimensional faces of $\sigma$. Each $u_i$ corresponds to a toric invariant divisor  $D_i$ on $X_\sigma$. Since the canonical divisor is given by $K_{X_\sigma}= - \sum D_i$, the variety $X_\sigma$ is $\QQ$-Gorenstein if and only if there exists $w\in M \otimes \QQ \subset M_\RR$ such that $\langle u_i, w\rangle = 1$ for $i=1,\ldots,r$. 

Given $ u\in \sigma$, we get a \emph{toric valuation} $v_ u\in \Val_X$ defined by
\[v_u \left(  \sum_{m\in M \cap \sigma^\vee} \alpha_v \chi^v \right) =  \min \{ \langle u, v\rangle \, \vert \, \alpha_v \neq 0 \}.\]

If $u \in \sigma^\vee \cap N$ is primitive,  the valuation $v_u$ corresponds to vanishing along a prime divisor on a toric variety proper and birational over $X_\sigma$.
For $u\in \sigma$,    $v_u$ has center equal to $x$ if and only if $u \in \Int(\sigma)$.

Let $\Val_{X_\sigma,x}^{\toric} \subset \Val_{X_\sigma,x}$ denote the valuations on $X_\sigma $ of the form $v_u$ for $u\in \Int(\sigma)$. We refer to these valuations as the \emph{toric valuations} at $x$.
It is straightforward to compute the normalized volume of such a valuation.
 Assume $X_\sigma$ is $\QQ$-Gorenstein and $w$ is the unique vector such that $\langle u_i,w \rangle =1$ for $i=1,\ldots,s$. For $u\in \sigma$, we have 
\[
A_{X_\sigma}(v_u) = \langle u, w \rangle. \]
For $u\in \sigma$ and $a\in \NN$, we set $H_u(m) = \{ v \in M_\RR \, \vert \, \langle u, v \rangle \geq m \}$.  Note that 
\[
\fa_m(v_u) = \left( \chi^v \, \vert \,  v \in H_u(m) \cap \sigma^\vee \cap M \right) .\]
In the case when $u\in \Int (\sigma)$,
\[
\vol(v_u) = n!  \cdot \Vol( \sigma^\vee \cap H_u(1) ),\]
where $\Vol$ denotes the Euclidean volume. 

\subsection{Deformation to the Initial Ideal} 
As explained in \cite{Bud}, when  $X_\sigma \simeq \Af^n$ and $I \subset R_\sigma$, there exists a deformation of $I$ to a monomial ideal. We show that a similar argument extends to our setting. 

We seek to put a $\ZZ_{\geq 0}^n$  order on the monomials of $R_\sigma$. The content of this paragraph is modeled on \cite[Section 6]{Kaveh}.
Fix $y_1,\ldots,y_n\in N \cap \sigma$ that are linearly independent in $M_\RR$. 
Thus, we get an injective map $\rho :M \to \ZZ^n$ by sending 
\[
 v \longmapsto \left( \langle y_1,v \rangle , \ldots,  \langle y_n,v \rangle \right) .\]
Since each $y_i \in \sigma$, we have $\rho( M \cap \sigma^\vee) \subseteq \ZZ_{\geq 0}^n$. 
 After putting the lexigraphic order on $\ZZ_{\geq 0}^n$, we get an order $>$ on the monomials of $R_\sigma$. 
 
An element $f\in R_\sigma$ may be written as a sum of scalar multiples of distinct monomials. The \emph{initial term} of $f$, denoted $\init_>f$, is the greatest term of $f$ with respect to the order $>$. For an ideal $I \subset R_\sigma$, the \emph{initial ideal} of $I$ is 
\[
\init_> I = ( \init_>f \, \vert \, f\in I).\]
The initial ideal satisfies the following property. 

\begin{lemma}\label{deformlength}
If $I\subset R_\sigma$ is an $\fm_x$-primary ideal, then $\init_> I$ is $\fm_x$ primary and \[
\length (R_\sigma / I ) 
= \length(R_\sigma / \init_{>} I) .\]
\end{lemma}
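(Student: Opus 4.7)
The plan is to establish a standard monomial basis for both quotients, in the style of Gröbner theory adapted to the semigroup algebra $R_\sigma$. The first step is to verify that the order $>$ is a \emph{well-order} on $\sigma^\vee \cap M$. Since $y_1, \ldots, y_n$ are linearly independent in $N_\RR$, the map $\rho : M \to \ZZ^n$ is injective, and the fact that each $y_i \in \sigma$ and $v \in \sigma^\vee$ pair to a nonnegative integer shows $\rho(\sigma^\vee \cap M) \subseteq \ZZ_{\geq 0}^n$. The lexicographic order on $\ZZ_{\geq 0}^n$ is a well-order (any strictly descending sequence stabilizes first in the top coordinate, then the next, and so on), and this property pulls back through $\rho$ to $\sigma^\vee \cap M$.

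Next, I would handle the $\fm_x$-primary claim. The ideal $\fm_x \subset R_\sigma$ is the monomial ideal generated by $\chi^v$ for $v \in (\sigma^\vee \cap M) \setminus \{0\}$, and any monomial ideal $J$ satisfies $\init_> J = J$ (both inclusions are immediate from the definition). Since $I$ is $\fm_x$-primary, $\fm_x^k \subseteq I$ for some $k$, hence $\fm_x^k = \init_>(\fm_x^k) \subseteq \init_> I$, giving the primary condition.

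The core of the argument is that the set $B = \{\chi^v : v \in \sigma^\vee \cap M,\ \chi^v \notin \init_> I\}$ descends to a $k$-basis of both $R_\sigma/I$ and $R_\sigma/\init_> I$. For $R_\sigma/\init_> I$ this is automatic since $\init_> I$ is a monomial ideal. For $R_\sigma/I$: linear independence follows because any nontrivial relation $\sum c_v \chi^v \in I$ with $\chi^v \in B$ would have initial term a scalar multiple of some $\chi^{v^\ast}$, placing this standard monomial in $\init_> I$, a contradiction (the order is multiplicative because $\rho$ is a group homomorphism). Spanning is by the division algorithm: given $f \in R_\sigma$, let $\chi^u$ be the largest monomial of $f$ lying in $\init_> I$; write $\chi^u = \chi^{u'} \cdot \init_> g$ for some $g \in I$ (using that $\init_> I$ is a monomial ideal generated by initial terms of elements of $I$), and subtract an appropriate scalar multiple of $\chi^{u'} g \in I$ to cancel the $\chi^u$ term. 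This produces an element whose largest "bad" monomial is strictly smaller than $\chi^u$ in the order $>$. The well-order property from Step 1 guarantees termination.

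The main obstacle I anticipate is the termination of the reduction procedure, which is where the well-order structure is essential; in a general monomial order on a semigroup algebra this can fail, so the argument genuinely uses that $\rho$ embeds $\sigma^\vee \cap M$ into $\ZZ_{\geq 0}^n$ with nonnegative coordinates. Once termination is secured, we conclude $\dim_k R_\sigma/I = |B| = \dim_k R_\sigma/\init_> I$. Both $R_\sigma/I$ and $R_\sigma/\init_> I$ are supported only at $\fm_x$ with residue field $k$, so their $R_\sigma$-length equals their $k$-dimension, yielding the desired equality $\length(R_\sigma/I) = \length(R_\sigma/\init_> I)$.
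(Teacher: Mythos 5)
Your proof is correct and takes the same route as the paper, which simply defers to the standard-monomial-basis argument of Eisenbud's Theorem 15.3; you have supplied exactly the details needed to adapt that argument to the semigroup algebra $R_\sigma$, namely that $\rho$ embeds $\sigma^\vee\cap M$ into $\ZZ_{\geq 0}^n$ so that $>$ is a multiplicative well-order and the division procedure terminates. The identification of length with $k$-dimension at the end is also the right way to conclude.
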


\begin{proof}
The proof is nearly identical to the proof of  \cite[Theorem 15.3]{Bud}.
\end{proof}

Similar to the argument in \cite{Bud}, we construct a deformation of $I$ to $\init_>I$. 
Since $R_\sigma$ is Noetherian, we may choose elements $g_1,\ldots, g_s \in I$ such that 
\[
I = (g_1,\ldots, g_s)\,\,\, \text{ and } \init_>I = (\init_> g_1, \ldots, \init_> g_s).\]
Fix an integral weight $\la : M \cap \sigma^\vee \to  \ZZ_{\geq 0}$
such that 
\[
\init_{>_\la}(g_i) = \init_{>}(g_i)
\]
for all $i$. Note that $>_\la$ denotes the order on the monomials induced by the weight function $\la$. 

Let $R_\sigma [t]$ denote the polynomial ring in one variable over $R_\sigma$.
For $g= \sum \alpha_m \chi^m$, we write $b \coloneqq  \max \{ \la(m)\, \vert \, \alpha_m \neq 0\}$ and set
\[
\tilde{g}: = t^b \sum \alpha_m t^{-\la(m)} \chi^m .\]
 Next, let 
 \[
 \tilde{I} = ( \tilde{g}_1 \,\ldots \tilde{g}_s) \subset R_\sigma [t].\]
 
 For $c\in k$, we write $I_c$ for the image of $\tilde{I}$ under the map $R_\sigma[t]\to R_\sigma$ defined by $t\mapsto 0$. 
 It is clear that $I_1= I$ and $I_0 = \init_{>}I$.

\begin{proposition}\label{deformlct}
If $I$ is an $\fm_x$ primary ideal on $X_\sigma$, then 
$\lct( \init_<(I))  \leq \lct(I)$.
\end{proposition}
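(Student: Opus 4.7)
The plan is to realize the passage from $I$ to $\init_> I$ as a one-parameter flat family of $\fm_x$-primary ideals $\{I_c\}_{c \in \Af^1}$ with $I_1 = I$ and $I_0 = \init_> I$, and then to apply semicontinuity of the log canonical threshold in families (Proposition \ref{clct} of Appendix \ref{secmult}).

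First, I would refine the choice of the weight function $\la$ used to build the deformation so that it is the restriction to $M \cap \sigma^\vee$ of a group homomorphism $M \to \ZZ$ of the form $m \mapsto \langle y, m \rangle$ for some $y \in N$. This is possible in the toric setting because the order $>$ is defined by pairing $M$ with $y_1, \dots, y_n \in N \cap \sigma$, and only finitely many monomials appear in $g_1, \dots, g_s$, so a suitable positive integral combination $y = a_1 y_1 + \cdots + a_n y_n$ with $a_i \gg 0$ induces a weight that agrees with the leading term selection on each $g_i$. With this choice, for every $c \in k^\times$ the rule $\phi_c(\chi^m) = c^{\langle y, m\rangle} \chi^m$ defines a $k$-algebra automorphism of $R_\sigma$ arising from the one-parameter subgroup of the torus associated to $y$, and this automorphism fixes the torus-invariant point $x$.

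Next, a direct computation from the definition of $\tilde{g}_i$ would show $I_c = \phi_{1/c}(I)$ for $c \neq 0$; since $\phi_{1/c}$ is an automorphism of $X_\sigma$ fixing $x$, this yields $\lct(I_c) = \lct(I)$ for all $c \neq 0$. Moreover, by Lemma \ref{deformlength} every $I_c$ is $\fm_x$-primary with colength equal to that of $I$, so the family is flat over $\Af^1$ and supplies an honest family of $\fm_x$-primary ideals to which the material of Appendix \ref{secmult} applies.

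I would then invoke the lower semicontinuity of the log canonical threshold in such families (Proposition \ref{clct}): there is a nonempty open subset $U \subseteq \Af^1$ on which $\lct(I_c)$ is constant and equal to its maximum value, and on the complement (in particular at $c = 0$) the log canonical threshold can only drop. Combining this with the invariance $\lct(I_c) = \lct(I)$ for $c \neq 0$ yields $\lct(\init_> I) = \lct(I_0) \leq \lct(I)$, as desired. The main obstacle I expect is justifying that the weight $\la$ in the paper's construction can indeed be replaced by one coming from an element $y \in N$ (so that $\phi_c$ is a genuine automorphism of $R_\sigma$, not merely of some ambient polynomial ring); once this toric refinement of the Gr\"obner deformation is in hand, the rest reduces to standard semicontinuity.
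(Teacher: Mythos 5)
Your overall strategy is exactly the paper's: realize $I \rightsquigarrow \init_> I$ as a Gr\"obner-type degeneration $\{I_c\}_{c\in\Af^1}$ with $I_c$ isomorphic to $I$ via a torus automorphism for $c\neq 0$, and then conclude by semicontinuity of the log canonical threshold at the special fiber. Your refinement of the weight --- replacing $\la$ by $\langle y,\cdot\rangle$ for $y=a_1y_1+\cdots+a_ny_n\in N\cap\sigma$ with $a_1\gg a_2\gg\cdots\gg a_n$ so that $\phi_c$ is an honest one-parameter-subgroup automorphism of $R_\sigma$ fixing $x$ --- is a legitimate (and welcome) way of making precise what the paper does implicitly when it declares $\chi^m\mapsto t^{\la(m)}\chi^m$ to be an automorphism of $R_\sigma[t,t^{-1}]$; that map is a ring homomorphism only because the weight is additive on $M\cap\sigma^\vee$.

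The one genuine problem is the final step. Proposition \ref{clct} only asserts that $\lct(I_c)$ is constant on \emph{some} nonempty open $U\subseteq\Af^1$; it says nothing about how the value at a point outside $U$ (here $c=0$) compares to the generic value, so your sentence ``on the complement the log canonical threshold can only drop'' does not follow from the result you cite. That inequality is precisely the content of Proposition \ref{p:lctlsc}, which is what the paper invokes: it requires that $V(\tilde I)$ be proper over $\Af^1$, and this holds because each $I_c$ is $\fm_x$-primary (for $c\neq 0$ since $I_c=\phi_{1/c}(I)$ and $\phi_{1/c}$ fixes $\fm_x$, and for $c=0$ by Lemma \ref{deformlength}), so that $V(\tilde I)_{\red}=\{x\}\times\Af^1$. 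With \ref{p:lctlsc} in place of \ref{clct} the argument closes, and the flatness discussion becomes unnecessary --- properness of the cosupport over the base is all that is needed.
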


\begin{proof}
We consider the automorphism of $R_\sigma [t,t^{-1}]$ that send $\chi^m$ to $t^ {\la (m) } \chi^m $.  This automorphism sends $\tilde{I} R_\sigma [t,t^{-1}]$ to $IR_\sigma [t,t^{-1}]$. Therefore, for each $c\in k^*$, we get an automorphism $\varphi_c: R_\sigma \to R_\sigma$ such that $\varphi_c(I_c) = I$. Thus, $\lct(I_c) = \lct(I)$ for all $c\in k^*$. Additionally, since $\varphi_c(\fm_x)=\fm_x$, we see each ideal $I_c$ is $\fm_x$ primary. 
Now, we may apply Proposition \ref{p:lctlsc} to see $\lct(I_0) \leq \lct(I)$. Since $\init_{>}(I) = I_0$, we are done.\end{proof}

\subsection{Proof of Theorem \ref{b}}

Theorem \ref{b} is a direct consequence of Proposition \ref{deg}. Note that the proof of Proposition \ref{deg} is directly inspired by the main argument in \cite{mus}.

\begin{proposition}\label{deg} Let $\ab$ be a graded sequence of $\fm_x$-primary ideals on $X_\sigma$. We have that 
\[
\lct( \init_>(\ab))^n \hs(\init(\ab)) \leq \lct(\ab)^n  \hs(\ab).
\]
\end{proposition}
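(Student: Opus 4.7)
The plan is to reduce the statement for graded sequences to the two already-established ``one-ideal'' results, namely Lemma \ref{deformlength} (which preserves colength) and Proposition \ref{deformlct} (which only decreases the lct under passage to the initial ideal), and then pass to the limit in $m$.

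First I would verify that $\init_>(\ab) \coloneqq \{\init_>(\fa_m)\}_{m \in \NN}$ is again a graded sequence of $\fm_x$-primary ideals. The monomial order $>$ is multiplicative, so $\init_>(fg) = \init_>(f)\init_>(g)$; combined with $\fa_m \cdot \fa_n \subseteq \fa_{m+n}$, this immediately gives the inclusion $\init_>(\fa_m) \cdot \init_>(\fa_n) \subseteq \init_>(\fa_{m+n})$ on generators, hence on the whole product. The fact that each $\init_>(\fa_m)$ is $\fm_x$-primary is the content of Lemma \ref{deformlength}.

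Next I would handle the two factors on each side of the inequality separately. For the multiplicity, Lemma \ref{deformlength} gives
\[
\length(R_\sigma/\fa_m) = \length(R_\sigma/\init_>(\fa_m))
\]
for every $m$, and since each ideal is $\fm_x$-primary, these colengths agree with the corresponding lengths over $\cO_{X_\sigma,x}$. Dividing by $m^n/n!$ and sending $m \to \infty$ yields $\hs(\init_>(\ab)) = \hs(\ab)$ (using the last display of Section \ref{multsection}, which computes $\hs$ of a graded sequence as a genuine limit). For the log canonical threshold, Proposition \ref{deformlct} gives $\lct(\init_>(\fa_m)) \leq \lct(\fa_m)$ for each $m$; multiplying by $m$ and taking the supremum over $m$ gives
\[
\lct(\init_>(\ab)) = \sup_m m \cdot \lct(\init_>(\fa_m)) \leq \sup_m m \cdot \lct(\fa_m) = \lct(\ab),
\]
by the defining formula for $\lct$ of a graded sequence recalled in Section \ref{lct}.

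Combining the equality $\hs(\init_>(\ab)) = \hs(\ab)$ with the inequality $\lct(\init_>(\ab)) \leq \lct(\ab)$ and raising the latter to the $n$-th power gives the claimed bound. No step is genuinely hard: the only place one has to be a little careful is the multiplicative behavior of $\init_>$ and the fact that the colength equality in Lemma \ref{deformlength}, proved there for a single ideal, transfers cleanly to a limit of normalized colengths — but both points are routine given the monomial-order framework already set up in the preceding subsection.
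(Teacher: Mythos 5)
Your proposal is correct and follows essentially the same route as the paper: the colength equality of Lemma \ref{deformlength} gives $\hs(\init_>(\ab)) = \hs(\ab)$ term by term, and Proposition \ref{deformlct} gives $\lct(\init_>(\ab)) \leq \lct(\ab)$ after multiplying by $m$ and taking the supremum. The only addition is your explicit check that $\init_>(\ab)$ is again a graded sequence via multiplicativity of $\init_>$, which the paper leaves implicit but which is a worthwhile detail.
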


\begin{proof}
We first note that
\[
\hs(
 \init_>(\ab)
 ) \coloneqq  \limsup_{m\to \infty} \frac{ \length( \cO_{X_\sigma,x}/  \init_>(\fa_m))}{m^n/n!}
 =
\limsup_{m\to \infty} \frac{ \length( \cO_{X_\sigma,x}/  \fa_m)}{m^n/n!}
=: 
\hs(
 \ab),\]
 where the second equality follows from Proposition \ref{deformlength} and the other two are definitional. By Proposition \ref{deformlct}, 
 \[
 \lct(  \init_>(\ab))\leq \lct( \ab). \]
The result follows. 
\end{proof}

\begin{proof}[Proof of Theorem \ref{b}]
Since $ {\Val_{X,x}}^{\toric} \subset \Val_{X,x}$, we have
\[
\inf_{v\in \Val_{X,x} }\nvol(v) \leq 
\inf_{v\in \Val_{x,x}^{\toric}} \nvol(v).\]
We proceed to show the reveres inequality. 
Note that
\begin{equation} \label{eq10} \inf_{v\in \Val_{X,x} }\nvol(v) = \inf_{\ab \, \fm_x \text{-primary}   } \lct(\ab)^n \hs(\ab)
=
\inf_{\substack{\ab \, \fm_x \text{-primary} \\ \text{monomial}   }} \lct(\ab)^n \hs(\ab)
,\end{equation} where the first equality is stated in Proposition \ref{relate} and the second follows from Proposition \ref{deg}.
The last infimum in Equation \ref{eq10}  is equal to
\[ 
\inf_{\substack{\fa \, \fm_x \text{-primary} \\ \text{monomial}   }}  \lct(\fa)^n \hs(\fa) 
\]
by Lemma \ref{details}.
Thus, it is sufficient to show that
\[
\inf_{v\in {\Val_{X,x}}^{\toric}} \nvol(v) 
\leq 
\inf_{\substack{\fa\, \fm_x \text{-primary} \\ \text{ monomial}}} \lct(\fa)^n \hs(\fa) . 
\] 

Let $\fa$ be an $\fm_x$-primary monomial ideal. Since $\fa$ is a monomial ideal, there exists a toric valuation $v^\ast \in \Val_{X,x}^{\toric}$ such that $v^\ast $ computes $\lct(\fa)$. (This follows from the fact that there exists a toric log resolution of $\fa$.) By Proposition \ref{tech}, 
\[
\nvol(v^\ast )\leq \lct(\fa)^n \hs(\fa), \]
and the proof is complete.
\end{proof}

\subsection{An Example of Non-Divisorial Volume Minimizer}\label{example}

Let $V$  denote $\PP^2$ blown up at a point. Note that $V$ is a Fano variety. 
The affine cone over $V$ with embedding $-K_V$ is isomorphic to the toric variety $ X_\sigma$ with torus invariant point $x$. 

\begin{figure}[h]
\includegraphics[height=2 in]{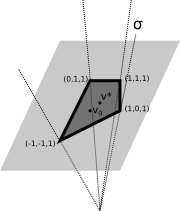}
 \caption{Drawn  is the cone $\sigma$. The toric variety $X_\sigma$ is isomorphic to the cone over $\PP^2$ blown up at a point.}
\end{figure}

We seek to find a minimizer of the function $\Val_{X_\sigma,x}^{\toric} \to   \RR_{>0}$ defined by 
$v_u \mapsto \nvol(v_u)$.
Since the normalized volume is invariant under scaling, it is sufficient to consider elements $u\in \Int(\sigma)$ of the form $u= (a,b,1) \in \Int(\sigma)$. 
We have  \[
A_{X_\sigma}(v_{(a,b,1)}) = \langle (a,b,1), (0,0,1) \rangle= 1.\]
The normalized volume of $v_{(a,b,c)}$ is
\[
\nvol( v_{(a,b,1)} ) \coloneqq  A_X(v_{(a,b,1)})^3 \vol(v_{(a,b,1)}) = 3 ! \Vol ( \sigma^\vee \cap H_{(a,b,1)}(1) ).\]
After computing the previous volume, we see that the function is minimized at 
\[
(a^\ast ,b^\ast ,1 ) = (4/3 -\sqrt{13}/3, 4/3 -\sqrt{13}/3, 1), \]
with $\nvol(v_{(a^\ast,b^\ast,1) } ) = \frac{1}{12} ( 46+13\sqrt{13}) $.  By Theorem \ref{b}, the toric volume minimizer $v^\ast = v_{(a^\ast ,b^\ast,1)}$ is also a minimizer of $\nvol_{X_\sigma,x}$. 

 Note that there cannot exist an additional volume minimizer of $\nvol_{X_\sigma,x}$ that is divisorial. Indeed, this follows from Proposition \ref{rational}.3.

\begin{appendix}

\section{Mutlplicities and Log Canonical Thresholds in Families}\label{secmult}

In this section we provide information on the behavior of the Hilbert-Samuel multiplicity and log canonical threshold in a family. The content of this section is well known to experts, but does not  necessarily appear in the literature in the follow form. 
 The following propositions will be useful in the proof of Proposition \ref{limits}.

\subsection{Multiplicities}

We first recall
a few notions from \cite[Section 14]{Mats}.
Let $(A,\fm)$ be a local ring of dimension $n$ and $\fa \subseteq A$ an ideal. We say that $\fb $ is a \emph{reduction} of $\fa$ if $\fb\subseteq \fa$ and there exists $r>0$ so that $\fb^{r}\fa = \fa^{r+1}$. If $\fa$ is an $\fm$-primary ideal and $\fb$ is a reduction of $\fa$, then $\hs(\fa)= \hs(\fb)$.

Assuming $A/\fm$ is an uncountable field and $\fa$ is an $\fm$-primary ideal, there always exists a parameter ideal $\fb\subset A$ such that $\fb$ is a reduction of $\fa$. Note that a \emph{parameter} ideal is an $\fm$-primary ideal generated by $n$ elements. 
If $\fb = (f_1,\ldots,f_n)$ is a parameter ideal, then
\[
\hs(\fb) = \sum_{i=0}^n (-1)^i  h_i \left(   \cK_\bullet \left( \underline{f} ,A  \right)     \right) .  \]
where the right hand sum is the Euler characteristic of the Koszul homology of $A$ with respect to the sequence $\underline{f} = f_1,\ldots,f_n$. As we see in the following proof, reducing to the parameter ideal case is useful in understanding multiplicities.

The following proposition is well known. Related statements appear in \cite{Lip}.

\begin{proposition}\label{cmult} 
Let $X$ and $T$ be varieties and $x\in X$ a closed point. 
If $\fa \subseteq \cO_{X\times T}$ is an ideal such that $\fa_t:=\fa \cdot \cO_{X\times \{ t \}}$ is $\fm_{x}$ primary for all closed points $t\in T$, then there exists an open set $U\subseteq T$
such that 
$\hs(\fa_t)$ is constant for all $t\in U$. 
\end{proposition}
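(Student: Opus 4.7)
The plan is to reduce the problem to the case where the ideals $\fa_t$ are parameter ideals, and then establish constancy of the multiplicity via generic flatness applied to a Koszul complex on $X\times T$. Fix a closed point $t_0\in T$ and work in an affine neighborhood of $(x,t_0)$ throughout. Because the base field $k$ is uncountable and $\fa_{t_0}$ is $\fm_x$-primary in the $n$-dimensional local ring $\cO_{X,x}$, I can choose general elements $g_1,\ldots,g_n\in \fa_{t_0}$ so that $\fb^{t_0}=(g_1,\ldots,g_n)$ is a parameter ideal and a (minimal) reduction of $\fa_{t_0}$; in particular $\hs(\fa_{t_0})=\hs(\fb^{t_0})$.

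Next I would spread this out. Lift each $g_i$ to a section $f_i$ of $\fa$ in a neighborhood of $(x,t_0)$, and set $\fb=(f_1,\ldots,f_n)$. Let $Z\subseteq X\times T$ be the subscheme cut out by $\fb$ near $(x,t_0)$. Since $Z_{t_0}$ is supported at $x$, upper semicontinuity of fiber dimension lets me shrink to an open $V\ni t_0$ over which the projection $\pi\colon Z\to V$ is finite with fibers set-theoretically equal to $\{x\}$; in particular $\fb_t$ is $\fm_x$-primary for all $t\in V$. The reduction equality $\fb_{t_0}\cdot\fa_{t_0}^{\,r}=\fa_{t_0}^{\,r+1}$ (for a fixed $r$ that works at $t_0$) is an equality of coherent sheaves at the stalk $(x,t_0)$, so it persists in a neighborhood. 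After further shrinking $V$, $\fb_t$ remains a reduction of $\fa_t$ and $\hs(\fa_t)=\hs(\fb_t)$ for all $t\in V$, reducing the problem to showing that $\hs(\fb_t)$ is constant on some open of $V$.

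For this last step I would use Serre's formula $\hs(\fb_t)=\sum_i(-1)^i\length_{\cO_{X,x}}H_i(K_\bullet(\underline{f}(t);\cO_{X,x}))$, which is available because each $\fb_t$ is a parameter ideal. The Koszul complex $K_\bullet=K_\bullet(f_1,\ldots,f_n;\cO_{X\times V})$ is a bounded complex of locally free sheaves; each homology sheaf $H_i(K_\bullet)$ is coherent, annihilated by $\fb$, hence supported on $Z$, so $\pi_\ast H_i(K_\bullet)$ is coherent on $V$. Generic flatness, applied simultaneously to the finitely many nonzero $H_i(K_\bullet)$, yields an open $U\subseteq V$ over which each $\pi_\ast H_i(K_\bullet)$ is locally free and hence each $H_i(K_\bullet)$ is flat over $\cO_U$. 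The base-change spectral sequence $E_2^{p,q}=\mathrm{Tor}^{\cO_U}_{-p}(\kappa(t),H_q(K_\bullet))\Rightarrow H_{p+q}(K_\bullet\otimes_{\cO_U}\kappa(t))$ then degenerates, giving a canonical isomorphism $H_i(K_\bullet)\otimes_{\cO_U}\kappa(t)\cong H_i(K_\bullet(\underline{f}(t);\cO_{X,x}))$. Taking lengths and alternating sums shows that $\hs(\fb_t)=\hs(\fa_t)$ is constant on $U$.

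The main obstacle is precisely the last constancy step, because Koszul homology does not commute with base change in general; the remedy is the preliminary application of generic flatness to each $H_i(K_\bullet)$, which forces the base-change spectral sequence to degenerate. A secondary technical point is propagating the minimal-reduction property from the single fiber over $t_0$ to a neighborhood, which is handled by the openness of fiber-finiteness of $Z\to V$ together with the openness of equality of finitely generated coherent ideals.
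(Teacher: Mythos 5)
Your overall strategy---pass to a parameter-ideal reduction, then control the Euler characteristic of the Koszul complex in the family---is the same as the paper's, and your endgame (generic flatness of the Koszul homology sheaves plus degeneration of the base-change spectral sequence) is a legitimate substitute for the paper's choice of an open set where the base-change maps on Koszul homology are isomorphisms of constant rank. The gap is in the middle step. You choose the reduction $(g_1,\ldots,g_n)$ of $\fa_{t_0}$ at a \emph{fixed closed point} $t_0$ and assert that the equality $\fb_{t_0}\fa_{t_0}^{\,r}=\fa_{t_0}^{\,r+1}$ ``persists in a neighborhood'' because it is an equality of coherent sheaves at the stalk $(x,t_0)$. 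That is not a stalk-level equality of sheaves on $X\times T$; it is an equality of the \emph{restrictions to the special fiber} of two ideal sheaves, and such equalities are not open conditions in $t$. Concretely, take $X=T=\Af^1$ with coordinates $y,s$ and $\fa=(y^2,sy)\subseteq k[y,s]$: every $\fa_t$ is $\fm_x$-primary, with $\fa_0=(y^2)$ and $\fa_t=(y)$ for $t\neq 0$. At $t_0=0$ your recipe may produce $g_1=y^2$, lifted to $f_1=y^2\in\fa$, so $\fb_t=(y^2)$ for all $t$; this is a reduction of $\fa_0$ but is not a reduction of $\fa_t$ for any $t\neq 0$ (no $r$ satisfies $(y^2)(y)^r=(y)^{r+1}$), and indeed $\hs(\fa_0)=2$ while $\hs(\fa_t)=1$. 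Your argument would then compute the constant value of $\hs(\fb_t)$, which is not $\hs(\fa_t)$ on any punctured neighborhood of $t_0$. The same example shows the reduction locus $\{t:\fb_t\text{ is a reduction of }\fa_t\}$ can be the single point $\{t_0\}$.

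The repair is exactly what the paper does: make the choice at the \emph{generic point} $\eta$ of $T$. Pick $b_1,\ldots,b_n\in\fa_\eta\subseteq\cO_{X_{K(T)}}$ generating a reduction of $\fa_\eta$ (possible because $K(T)$ is again uncountable), and spread out to $\tilde b_i\in\fa(X\times V)$. Since localization at $\eta$ is exact, the coherent sheaf $\fa^{r+1}/\fb\fa^{r}$ on $X\times V$ vanishes after base change to $K(T)$, so its support does not dominate $T$ and can be removed by shrinking $V$; then $\fb_t\fa_t^{\,r}=\fa_t^{\,r+1}$ holds honestly for all closed $t\in V$, and likewise $\fb_t$ is $\fm_x$-primary on a dense open. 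With that replacement your Koszul/generic-flatness argument goes through. (A minor symptom of the same issue: the open set $U$ produced by generic flatness need not contain $t_0$ anyway, so the ``fixed $t_0$'' framing buys you nothing---the proposition only asks for some dense open $U\subseteq T$, which is precisely what the generic-point argument delivers.)
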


\begin{proof}
We may assume that $X$ and $T$ are irreducible affine varieties. Let $\eta$ denote the generic point of $T$ and consider the ideal $\fa_{K(t)}$ on $X _{K(T)}$. Since $K(T)$ is an uncountable field (our base field $k$ is uncountable by assumption $k \subseteq K(T)$), we may find 
\[
b_1,\ldots, b_n\in \fa_\eta\]
such that $(b_1,\ldots, b_n)$ is a reduction of $\fa_\eta$. Now, choose an open subset $V\subseteq T$ such that each $b_i$ extends to an element $\tilde{b}_i \in \fa(X\times V)$. Let $\fb=(\tilde{b}_1,\ldots,\tilde{b}_n)\subseteq \cO_{X\times V}$. After shrinking $V$, we may assume that $\fb_t:= \fb \cdot \cO_{X\times \{t \}}$ is $\fm_{\{x\} }$-primary and a reduction of $\fa_t$ for all closed points $t\in V$. Therefore, $\hs(\fa_t) = \hs(\fb_t)$ for all $t\in V$.

We are reduced to showing that 
\[
t\in V \mapsto \hs(\fa_t) = \sum_{i} (-1)^i  h_i \left(   \cK_\bullet \left( \underline{b}_t , \cO_{p^{-1}(t),x}  \right)     \right) .
\]
is constant on an open set $U \subseteq V$. 
Note that there is a natural map
\[
 H_i \left ( \cK_\bullet \left( \underline{b} , \cO_{X\times T,\{x\} \times T}  \right) \right)\otimes k(t)
 \longrightarrow
 H_i \left ( \cK_\bullet \left( \underline{b}_t , \cO_{p^{-1}(t),x}  \right) \right)
,\]
and for $t= \eta$, the generic point of $T$, the map is an isomorphism. We choose an open set $U\subseteq V$ such that, for each $i=0,\ldots, n$, the above map is an isomorphism for all $t\in U$ and the dimension of $H_i \left ( \cK_\bullet \left( \underline{b} , \cO_{X\times T,\{x\} \times T}  \right) \right)\otimes k(t)$ is constant for all $t\in U$. 
 This completes the proof. 
\end{proof}

\subsection{Log Canonical Thresholds}

The following statements are well known, but do not explicitly appear in the literature. 

\begin{proposition}\label{clct} Let $X$ and $T$ be varieties such that $X$ is klt. Fix an ideal $\fa \subseteq \cO_{X\times T}$, and set $\fa_t: = \fa\cdot \cO_{X\times \{t\}}$. There exists a nonempty open set $U\subseteq T$ such that $\lct( \fa_t)$ is constant for all closed points $t\in U$.
\end{proposition}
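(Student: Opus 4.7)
The plan is to exhibit a single log resolution that works simultaneously for every fiber in a suitable open subset of $T$, then read off $\lct(\fa_t)$ from it. After shrinking, I may assume $T$ is irreducible, affine, and smooth. Since $X$ is klt (hence $\QQ$-Gorenstein) and $T$ is smooth, the product $X\times T$ is $\QQ$-Gorenstein with $K_{X\times T}=p_1^*K_X+p_2^*K_T$, so log discrepancies on $X\times T$ are defined and will behave well under restriction to fibers. I would take a log resolution $\mu:Y\to X\times T$ of $(X\times T,\fa)$, so that $\fa\cdot\cO_Y=\cO_Y(-D)$ with $D=\sum a_iE_i$ effective and $D_{\red}+\Exc(\mu)$ simple normal crossing.

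Next, let $I$ be the set of indices $i$ for which $E_i$ dominates $T$; the remaining $E_j$ are supported over a proper closed subset of $T$, so they are disjoint from $Y_t$ for $t$ in some open $U_0\subseteq T$. By generic smoothness in characteristic zero, applied to the morphism $p_2\circ\mu$ and simultaneously to each stratum of the SNC divisor $D_{\red}+\Exc(\mu)$ (including intersections and exceptional loci), there exists an open $U\subseteq U_0$ such that for every closed $t\in U$ the fiber $Y_t$ is smooth, the restriction $\mu_t:Y_t\to X$ is a proper birational morphism, $\fa_t\cdot\cO_{Y_t}=\cO_{Y_t}(-D_t)$ with $D_t=\sum_{i\in I}a_iE_{i,t}$, and $(D_t)_{\red}+\Exc(\mu_t)$ has simple normal crossings. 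Hence $\mu_t$ is a log resolution of $(X,\fa_t)$ for every $t\in U$.

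Finally, by adjunction together with $K_{X\times T}=p_1^*K_X+p_2^*K_T$, one checks that $K_{Y/X\times T}|_{Y_t}=K_{Y_t/X}$, so the coefficient of each component of $E_{i,t}$ in $K_{Y_t/X}$ coincides with the coefficient of $E_i$ in $K_{Y/X\times T}$. In particular $A_X(\ord_{E_{i,t}})=A_{X\times T}(\ord_{E_i})$ is independent of $t\in U$, and the log-resolution formula recalled in Section \ref{lct} gives
\[
\lct(\fa_t)=\min_{i\in I}\frac{A_{X\times T}(\ord_{E_i})}{a_i},
\]
which is a constant on $U$. The main obstacle is the generic-smoothness step: one must ensure that the single global log resolution $\mu$ specializes to a log resolution of $\fa_t$ on each fiber, which requires applying generic smoothness to every stratum of the SNC stratification at once. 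Once that is in place, the identification of log discrepancies under restriction and the invariance of the formula for $\lct$ are routine bookkeeping.
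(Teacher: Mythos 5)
Your proposal is correct and follows essentially the same route as the paper: take a single log resolution $\mu$ of $\fa$ on $X\times T$, use generic smoothness on all strata of the SNC divisor to ensure $\mu_t$ is a log resolution of $\fa_t$ for $t$ in an open set, and then read off $\lct(\fa_t)$ from the fiberwise-constant discrepancies and vanishing orders via $(K_{Y/X\times T})|_{Y_t}=K_{Y_t/X}$. The only cosmetic difference is that you discard the components not dominating $T$ while the paper shrinks $T$ so that every component dominates it.
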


\begin{proof}
Let $\mu: X' \to X\times T$ be a log resolution of $\fa$, and set $p' = p\circ \mu$:
\[
\begin{tikzcd}
X' \arrow{rr}{\mu} \arrow{rd}{p'} & & X \times T \arrow{ld}{p} \\
 & T&
\end{tikzcd}
.\]
Let $D$ be the divisor on $X'$ such that $ \fa \cdot \cO_{X'} = \cO_{X'}(-D)$ and $E_1,\ldots, E_r$ be the prime components of $\Exc(\mu)+D_{\red}$. 
After shrinking $T$, we may assume that each $E_i$ surjects onto $T$. 

We claim that on an open set $U\subset T$, $\mu_t: X'_t \to X_t$ is a log resolution of $\fa_t$ for all $t\in U$, where $X'_t \coloneqq  X'_{p^{-1}(t)}$ and $X_t := X\times \{t \}$. Indeed, by generic smoothness \cite[Corollar III.10.7]{Har} applied to $X'$, each $E_i$, and all the intersections of the $E_i$, we may find such a locus $U \subset T$. 

Now, we have $(K_{X'/ X\times T})\vert_{X_t} = K_{X'_t/X_t}$ and $\fa_t \cdot \cO_{X'_t} = 
\cO_{X'_t}(-D \vert _{X'_t} )$ for $t\in U$. Therefore, $\lct(\fa_t) = \min_{i=1,\ldots,r} \ord_{E_i}( K_{X'/X})/ \ord_{E_i}(D)$ for all closed points $t\in U$, and we are done. 
\end{proof}

\begin{proposition}\label{p:lctlsc}
Let $X$ be a klt variety, $T$ a smooth curve, and $t_0\in T$ a closed point.
Fix an ideal $\fa \subseteq \cO_{X\times T}$, and set $\fa_t:=\fa \cdot \cO_{X\times \{t\}}$. If the closed subscheme $V(\fa)\subseteq X$ is proper over $T$, then
there exists an open neighborhood $t_0 \in U \subseteq T$ such that 
\[
\lct( \fa_{t_0}) \leq \lct( \fa_t) \]
for all closed points $t\in U$. 
\end{proposition}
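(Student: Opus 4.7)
The proposition is the classical lower semicontinuity of log canonical thresholds in families, and the natural approach is via a simultaneous log resolution of $\fa$ and $X_{t_0}$. After shrinking $T$ to an affine neighborhood of $t_0$ on which $X_{t_0}$ is cut out by a single regular parameter $s$, I would take a log resolution $\mu\colon Y\to X\times T$ of $\fa\cdot(s)$, producing a simple normal crossing divisor $\sum_i E_i$ on $Y$ containing $\Exc(\mu)$, the support of $\fa\cdot\cO_Y=\cO_Y(-D)$, and the support of $\mu^\ast(X_{t_0})$. Record the data $a_i=\mult_{E_i}(K_{Y/X\times T})$, $b_i=\mult_{E_i}(D)$, $m_i=\mult_{E_i}(\mu^\ast(X_{t_0}))$. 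After a further shrinking of $T$, each $E_i$ is either horizontal ($m_i=0$, dominating $T$) or vertical ($m_i>0$, lying over $t_0$); let $E_{i_0}$ denote the strict transform of $X_{t_0}$.

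\textbf{Two local formulas.} For $t$ in a sufficiently small punctured neighborhood of $t_0$, generic smoothness makes $\mu_t\colon Y_t\to X_t$ a log resolution of $\fa_t$, so that
\[\lct(\fa_t)=\min_{i\text{ horiz},\,b_i>0}\frac{a_i+1}{b_i}.\]
For the special fiber, applying adjunction once to $X_{t_0}\subset X\times T$ (whose self-intersection class is trivial, since $X_{t_0}$ is the fiber of a flat map to $T$) and once to $E_{i_0}\subset Y$ shows that $\nu:=\mu|_{E_{i_0}}\colon E_{i_0}\to X_{t_0}$ is itself a log resolution of $\fa_{t_0}$, with $K_{E_{i_0}/X_{t_0}}=\sum_{i\neq i_0}(a_i-m_i)E_i|_{E_{i_0}}$ and $\fa_{t_0}\cdot\cO_{E_{i_0}}=\cO_{E_{i_0}}\bigl(-\sum_{i\neq i_0}b_iE_i|_{E_{i_0}}\bigr)$. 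Reading off the SNC components yields
\[\lct(\fa_{t_0})=\min_{i\neq i_0,\,E_i\cap E_{i_0}\neq\emptyset,\,b_i>0}\frac{a_i-m_i+1}{b_i}.\]

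\textbf{Comparison and main obstacle.} Fix a horizontal $E_{i^\ast}$ with $b_{i^\ast}>0$ realizing $\lct(\fa_t)$. Because $V(\fa)$ is proper over $T$ and $E_{i^\ast}$ maps dominantly into $V(\fa)$, the induced morphism $E_{i^\ast}\to T$ is proper and surjective, forcing $E_{i^\ast}\cap Y_{t_0}\neq\emptyset$. If $E_{i^\ast}\cap E_{i_0}\neq\emptyset$, the ratio $(a_{i^\ast}+1)/b_{i^\ast}$ (with $m_{i^\ast}=0$) appears directly in the minimum for $\lct(\fa_{t_0})$, and we are done. The main obstacle is the residual case, where $E_{i^\ast}$ meets only vertical components $E_j$ without touching $E_{i_0}$. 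Since $Y_{t_0}$ is connected (by proper birationality of $\mu$) and contains both $E_{i^\ast}\cap Y_{t_0}$ and $E_{i_0}$, there is a chain of vertical components connecting them, and I would refine $\mu$ by iterated blowups of codimension-two strata along this chain, producing new vertical exceptional divisors which do meet $E_{i_0}$. Verifying by direct computation on successive SNC blowups that each new exceptional divisor contributes a value $(a-m+1)/b\leq(a_{i^\ast}+1)/b_{i^\ast}$ to the minimum defining $\lct(\fa_{t_0})$ is the technical heart of the argument, amounting to checking that the combinatorial identities for discrepancies and multiplicities propagate correctly under blowups of SNC strata.
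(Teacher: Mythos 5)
Your reduction to the two local formulas is essentially sound (the adjunction computation of $K_{E_{i_0}/X_{t_0}}$ and the fact that $\mu|_{E_{i_0}}$ is a log resolution of $\fa_{t_0}$ both check out), and you have correctly isolated the real difficulty: the horizontal divisor $E_{i^\ast}$ computing $\lct(\fa_t)$ need not meet the strict transform $E_{i_0}$. But the proposed repair of the residual case fails, and it fails already at the first blow-up. Blow up the stratum $E_{i^\ast}\cap E_j$ with $E_j$ vertical: the new exceptional divisor $F$ has $a_F=a_{i^\ast}+a_j+1$, $b_F=b_{i^\ast}+b_j$, $m_F=m_j$, so
\[
\frac{a_F-m_F+1}{b_F}=\frac{(a_{i^\ast}+1)+(a_j-m_j+1)}{b_{i^\ast}+b_j},
\]
the mediant of $(a_{i^\ast}+1)/b_{i^\ast}$ and $(a_j-m_j+1)/b_j$. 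A mediant lies between its two constituents, so the displayed quantity is $\leq(a_{i^\ast}+1)/b_{i^\ast}$ only if $E_j$ already satisfies $(a_j-m_j+1)/b_j\leq(a_{i^\ast}+1)/b_{i^\ast}$ --- which is essentially the inequality you are trying to establish, and nothing in the construction of $\mu$ gives it (indeed $b_j$ may even be $0$). More structurally: every divisor obtained by iterated blow-ups of strata of the SNC divisor is monomial with respect to it, and for monomial valuations the ratio of the (adjunction-corrected) log discrepancy to $\ord(D)$ is a quotient of linear functions of the weights, hence is minimized at the vertices of the dual complex. So no chain of stratum blow-ups can produce a divisor meeting $E_{i_0}$ whose ratio beats the minimum over the components you started with; the obstruction you identified cannot be removed by combinatorics on a single resolution. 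The missing ingredient is a genuine vanishing-type input: the restriction theorem $\cJ(X_{t_0},\la\cdot\fa_{t_0})\subseteq\cJ(X\times T,\la\cdot\fa)\cdot\cO_{X_{t_0}}$, or equivalently inversion of adjunction/the connectedness lemma.

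For comparison, the paper's proof avoids resolutions altogether. It takes $\la$ to be the generic value of $\lct(\fa_t)$ (constant on an open set by Proposition~\ref{clct}), notes that $V(\cJ(X\times T,\la\cdot\fa))$ sits inside $V(\fa)$ and is therefore proper over $T$, and that by generic restriction of multiplier ideals it meets $X_t$ for general $t$; its closed image in $T$ is then all of $T$, so it meets $X_{t_0}$, and the restriction theorem forces $\cJ(X_{t_0},\la\cdot\fa_{t_0})\neq\cO_{X_{t_0}}$, i.e. $\lct(\fa_{t_0})\leq\la$. If you wish to keep your explicit framework, the cleanest fix is to import that restriction statement (local vanishing on your resolution $Y$) rather than attempt to reprove it by blow-ups.
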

\begin{remark}
The condition that $V(\fa)$ is proper over $T$ holds in the following situation. Assume each ideal $\fa_t$ is $\fm_x$-primary for all closed points $t\in T$. Thus, $V(\fa)_{\red} = \{x \} \times T$, and $V(\fa)$ is proper over $T$. 
\end{remark}
\begin{proof}
By Proposition \ref{clct}, we may choose a nonempty open set $V\subseteq T$ such that 
$\lct(\fa_t)$ takes the constant value $\la$ for all $t\in V$. We will show $\lct(\fa_{t_0}) \leq \la$. Therefore, $U= V \cup \{t_0\}$ will satisfy the conclusion of our proposition. 

In order to prove $\lct(\fa_{t_0})\leq \la$, we will show  $V(\cJ(X\times T, \fa^\la))$ intersects $X\times \{t_0\}$. Since
$\cJ(X\times \{t_0\}, \la \cdot \fa_{t_0} )\subseteq\cJ(X\times T, \la \cdot \fa)\cdot \cO_{X\times \{t_0\}}$ \cite[Theorem 9.5.16]{Laz}, we will then conclude
$\cJ(X\times \{t_0\}, \la \cdot \fa_{t_0} )\neq \cO_{X\times \{t_0\}}$.

To  show $V(\cJ(X\times T,\la \cdot  \fa))$ intersects $X\times \{t_0\}$, we first note that the image of $V(\cJ(X,\times T, \la \cdot \fa)$ under the projection map $p:X\times T \to T$ is closed. Indeed, $V(\cJ(X,\times T, \la \cdot \fa)$ is contained in $V(\fa)_{\red}$, and the latter set is proper over $T$.  

Next, we apply \cite[Example 9.5.34]{Laz} to choose a nonempty open set $V' \subseteq T$ such that $\cJ(X\times \{t \} ,\la \cdot \fa_t) = \cJ(X, \la \cdot \fa) \cdot \cO_{X\times \{t \}}$ for all $t\in V'$. (While this result in \cite{Laz} is stated when $X$ is smooth, the statement extends to the klt case.) Since $\cJ(X\times \{t \}, \la \cdot \fa_t) \neq \cO_{X\times \{t\}}$ for all $t\in V$, we see $V(\cJ(X,\la \cdot \fa))$ intersects $X\times \{t\}$ for all $t\in V\cap V'$. Therefore, the projection of $V(\cJ(X, \la \cdot \fa))$ to $Z$ contains $V\cap V'$. Since the projection is closed in $Z$, it must be all of $Z$. 
\end{proof}

\section{Valuations Computing Log Canonical Thresholds of Graded Sequences}\label{appcomputing}

In \cite{JM}, the authors prove the existence of valuations computing log canonical thresholds of graded sequences of ideals on smooth varieties. We show the following generalization to the case of singular varieties. While the statement is known to experts, it does not explicitly appear in the literature. 

\begin{theorem}\label{computingtheorem}
If $X$ is a klt variety and $\ab$ a graded sequence of ideals on $X$ such that $\lct(\ab)<+\infty$, then there exists $v^\ast \in \Val_X$ computing $\lct(\ab)$.
\end{theorem}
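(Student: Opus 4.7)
The plan is to adapt the proof of \cite[Theorem A]{JM} from the smooth to the klt setting, replacing the log discrepancy $A_Y$ on a log resolution with the variety log discrepancy $A_X$. The key inputs---lower semicontinuity of $A_X$ on $\Val_X$, affineness of $A_X$ on each quasimonomial simplex of a log-smooth model, and the existence of retractions $r_Y \colon \Val_X \to \mathrm{QM}(Y)$ onto the quasimonomial valuations of $Y$ satisfying $A_X(r_Y(v)) \leq A_X(v)$ and $r_Y(v)(\fa) = v(\fa)$ whenever $\fa \cdot \cO_Y$ is monomial---were established for klt varieties in \cite{BFFU}. Given these, the JM argument transports directly; below I outline the steps and the main difficulty.

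First I would produce a minimizing sequence of divisorial valuations. For each $m$, the log resolution description of $\lct(\fa_m)$ (Section \ref{lct}) gives a divisorial valuation $v_m$ that computes $\lct(\fa_m)$. After rescaling so that $v_m(\fa_m) = m$, we have $A_X(v_m) = m \cdot \lct(\fa_m)$. Since $m \cdot \lct(\fa_m) \nearrow \lct(\ab) < +\infty$, the sequence $\{A_X(v_m)\}$ is bounded and $v_m(\fa_m)/m = 1$ for every $m$.

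Next I would extract a limit by a diagonal argument. Fix a tower of log resolutions $\cdots \to Y_{k+1} \to Y_k \to \cdots \to X$ such that $Y_k$ simultaneously resolves $\fa_1, \ldots, \fa_k$. The retractions $r_{Y_k}(v_m)$ live in the finite-dimensional polyhedral complex $\mathrm{QM}(Y_k)$, and the bound $A_X(r_{Y_k}(v_m)) \leq A_X(v_m) \leq \lct(\ab)$ confines them to a compact subset (using affineness of $A_X$ on each simplex). By compactness and diagonalization, we may pass to a subsequence so that $r_{Y_k}(v_{m_i})$ converges in $\mathrm{QM}(Y_k)$ to some $w_k$ for every $k$. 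The $w_k$'s are compatible under the projections $\mathrm{QM}(Y_{k+1}) \to \mathrm{QM}(Y_k)$ by construction, and therefore (as in \cite[Prop.~3.7]{JM}) assemble into a valuation $v^\ast \in \Val_X$ with $r_{Y_k}(v^\ast) = w_k$.

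Finally I would verify that $v^\ast$ computes $\lct(\ab)$. Lower semicontinuity of $A_X$ yields $A_X(v^\ast) \leq \liminf_i A_X(v_{m_i}) = \lct(\ab)$. For the reverse direction, for any fixed $\ell$ one has $r_{Y_k}(v_{m_i})(\fa_\ell) = v_{m_i}(\fa_\ell)$ once $k \geq \ell$, and $v_{m_i}(\fa_\ell) \geq \ell$ because $v_{m_i}(\fa_m) = m$ forces $v_{m_i}(\ab) \geq 1$. Passing to the limit gives $v^\ast(\fa_\ell) \geq \ell$ for every $\ell$, hence $v^\ast(\ab) \geq 1$. Combined with the definition of $\lct$, this gives $\lct(\ab) \leq A_X(v^\ast)/v^\ast(\ab) \leq A_X(v^\ast) \leq \lct(\ab)$, so equality holds and $v^\ast$ is the desired computing valuation. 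The main obstacle is verifying that the retraction framework of JM (which is formulated for smooth $X$ and uses $A_Y$) continues to work with $A_X$ on klt $X$; this is precisely what \cite{BFFU} supplies, making the adaptation essentially mechanical once their foundations are invoked.
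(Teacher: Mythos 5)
Your outline has a genuine gap at the final verification step, in the claim that ``$v_{m_i}(\fa_\ell) \geq \ell$ because $v_{m_i}(\fa_{m}) = m$ forces $v_{m_i}(\ab) \geq 1$.'' This implication is backwards. Since $v(\ab) = \inf_\ell v(\fa_\ell)/\ell$, the normalization $v_{m_i}(\fa_{m_i}) = m_i$ yields only $v_{m_i}(\ab) \leq 1$; it gives no lower bound whatsoever on $v_{m_i}(\fa_\ell)$ for $\ell \neq m_i$, and hence none on $v^\ast(\fa_\ell)$ or $v^\ast(\ab)$. Without a lower bound on $v^\ast(\ab)$ the chain $\lct(\ab) \leq A_X(v^\ast)/v^\ast(\ab) \leq A_X(v^\ast)$ fails. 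The missing ingredient is the uniform approximation of valuation ideals by asymptotic multiplier ideals (Proposition \ref{val}, i.e.\ $\cJ(X, m\cdot\ab) \subseteq \fa_{m}(v)$-type containments, which is \cite[Theorem 6.2]{JM} in the smooth case): it gives $v(\fa_m) \leq m\, v(\ab) + A_X(v)$, whence $v_{m_i}(\ab) \geq 1 - A_X(v_{m_i})/m_i \geq 1 - \lct(\ab)/m_i$ and therefore $v_{m_i}(\fa_\ell) \geq \ell\bigl(1 - \lct(\ab)/m_i\bigr) \to \ell$. This estimate is not a formality; it is the central analytic input of the whole argument, and it is exactly what underlies the continuity of $v \mapsto v(\ab)$ on $\{A_X < +\infty\}$ that the paper's proof invokes. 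A secondary imprecision: you apply lower semicontinuity of $A_X$ to the sequence $v_{m_i}$, but $v^\ast$ is constructed as a limit of the retractions $r_{Y_k}(v_{m_i})$, and you never show $v_{m_i} \to v^\ast$ in $\Val_X$ (for ideals not resolved on any $Y_k$ there is no control). One should instead apply semicontinuity along the increasing sequence $w_k \to v^\ast$, using $A_X(w_k) \leq \liminf_i A_X(v_{m_i})$.

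For comparison, the paper does not run a retraction--diagonalization argument at all. It works directly on the valuation space: it fixes a normalizing subscheme $N$ containing $\Cosupp(\fa_1)$, shows the infimum defining $\lct(\ab)$ may be taken over the compact sets $\Val_X^N \cap \{A_X \leq M\}$ furnished by \cite{BFFU}, proves $v \mapsto v(\ab)$ is continuous on $\{A_X < +\infty\}$ by pulling back to a resolution and quoting \cite[Corollary 6.3]{JM}, and concludes by lower semicontinuity of $v \mapsto A_X(v)/v(\ab)$ on a compact set. Your approach, once repaired as above, would also work and is more constructive, but it must still import the same multiplier-ideal estimate, so it is not actually more elementary.
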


The proof we give is similar in spirit to the proof of \cite[Theorem 7.3]{JM}, but also relies on results in \cite{BFFU}.

\begin{proposition}
If $X$ is a klt variety and $\ab$ a graded sequence of ideals on $X$, then $v \mapsto v(\ab)$
is a continuous function on ${\Val_X \cap \{ A_X(v) < +\infty\}}$. 
\end{proposition}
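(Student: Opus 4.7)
The function $v\mapsto v(\ab)$ is upper semicontinuous on all of $\Val_X$ essentially for free: by the very definition of the topology on $\Val_X$, each function $v\mapsto v(\fa_m)$ is continuous, and so $v\mapsto v(\ab)=\inf_m v(\fa_m)/m$ is a pointwise infimum of continuous functions, hence upper semicontinuous. This gives upper semicontinuity on $\{A_X<+\infty\}$ for free.

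The substantive content is lower semicontinuity on $\{A_X<+\infty\}$, and my plan is to follow the retraction strategy of Jonsson and Musta{\c{t}}{\u{a}} \cite{JM} in the smooth case, as extended to the klt setting in \cite{BFFU}. For every projective log smooth model $(Y,D)\to X$ (with $Y$ smooth and $D$ a reduced snc divisor containing the exceptional locus), one has the finite-dimensional polyhedral complex $\mathrm{QM}(Y,D)\subseteq \Val_X$ of quasi-monomial valuations adapted to $(Y,D)$, along with a continuous retraction $r_{(Y,D)}\colon \{A_X<+\infty\}\to \mathrm{QM}(Y,D)$ satisfying $r_{(Y,D)}(v)(\fb)\leq v(\fb)$ for every nonzero ideal $\fb\subseteq \cO_X$. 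In the simplicial coordinates $v=v_\alpha$ on $\mathrm{QM}(Y,D)$, each $\alpha\mapsto v_\alpha(\fa_m)$ is a minimum of finitely many linear functions of $\alpha$, hence concave; therefore $\alpha\mapsto v_\alpha(\ab)$ is an infimum of concave functions, hence concave (and so continuous) on $\mathrm{QM}(Y,D)$. Composing with the continuous retraction, $v\mapsto r_{(Y,D)}(v)(\ab)$ is continuous on $\{A_X<+\infty\}$ for each fixed $(Y,D)$.

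The crux of the proof is to establish the identity
\[
v(\ab)\;=\;\sup_{(Y,D)}\, r_{(Y,D)}(v)(\ab)
\]
for every $v\in\{A_X<+\infty\}$, where the supremum is over all log smooth models $(Y,D)$ dominating $X$. Once this is known, $v\mapsto v(\ab)$ is a pointwise supremum of continuous functions, hence lower semicontinuous on $\{A_X<+\infty\}$; combined with the upper semicontinuity above, this gives continuity. The inequality "$\geq$" follows at once from the retraction estimate $r_{(Y,D)}(v)\leq v$. The hard part is the reverse inequality: no single $(Y,D)$ can simultaneously resolve all of the ideals $\fa_m$, so approximating $v(\ab)$ from below by $r_{(Y,D)}(v)(\ab)=\inf_k r_{(Y,D)}(v)(\fa_k)/k$ demands uniform control of every ratio $r_{(Y,D)}(v)(\fa_k)/k$ on a single model.

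To handle this, fix $\epsilon>0$ and pick $m_0$ so that $v(\fa_{m_0})/m_0<v(\ab)+\epsilon$ and such that $(A_X(v)+v(\Jac_X))/m_0<\epsilon$. Choose $(Y_0,D_0)$ to be a log resolution of $\fa_{m_0}$, so that $r_{(Y_0,D_0)}(v)(\fa_{m_0})=v(\fa_{m_0})$. The plan is to use the ELS-type estimate $\Jac_X\cdot \fa_k\subseteq \cJ(X,k\cdot\ab)$ from Proposition \ref{comp} together with the valuative characterization of the asymptotic multiplier ideal of Proposition \ref{val}, applied to the quasi-monomial valuation $r_{(Y_0,D_0)}(v)$, in order to bound $r_{(Y_0,D_0)}(v)(\fa_k)/k$ from below by $v(\ab)-O(\epsilon)$ uniformly in $k$; the finitely many indices $k<m_0$ are handled by further refining $(Y_0,D_0)$ to also resolve $\fa_1,\ldots,\fa_{m_0-1}$, which only moves the quasi-monomial valuation within a specified compact subset of a finite-dimensional simplex. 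Making this uniform estimate rigorous, particularly the passage between the valuation $v$ and its retract $r_{(Y_0,D_0)}(v)$ in the multiplier-ideal bound, is the main technical obstacle; it is precisely here that the klt hypothesis (to guarantee non-triviality of $\cJ$ and $\Jac_X$) and the assumption $A_X(v)<+\infty$ are essential.
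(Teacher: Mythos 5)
The easy parts of your plan are fine (upper semicontinuity as an infimum of continuous functions, the inequality $r_{(Y,D)}(v)(\ab)\leq v(\ab)$, and continuity of $\alpha\mapsto v_\alpha(\ab)$ on each face of $\mathrm{QM}(Y,D)$, where concavity plus the automatic upper semicontinuity gives continuity on the closed simplex). But the identity $v(\ab)=\sup_{(Y,D)}r_{(Y,D)}(v)(\ab)$, which you correctly identify as the entire content of the statement, is left unproved, and the sketch you offer for it does not close up. If you apply Proposition \ref{comp} together with Proposition \ref{val} to the quasi-monomial valuation $w=r_{(Y_0,D_0)}(v)$, what comes out is $w(\fa_k)\geq w(\cJ(X,k\cdot\ab))-w(\Jac_X)\geq k\,w(\ab)-A_X(w)-w(\Jac_X)$: a lower bound for $\inf_k w(\fa_k)/k$ in terms of $w(\ab)=\inf_k w(\fa_k)/k$ itself, which is circular and says nothing about $v(\ab)$. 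Taking $(Y_0,D_0)$ to be a log resolution of $\fa_{m_0}$ only forces $r(v)(\fa_{m_0})=v(\fa_{m_0})$ and gives no handle on $r(v)(\fa_k)$ for $k>m_0$. To break the circularity you must route through the ideals $\cJ(X,m\cdot\ab)$ themselves: Propositions \ref{comp} and \ref{val} give the two-sided squeeze $m\,w(\ab)-A_X(w)\leq w(\cJ(X,m\cdot\ab))\leq m\,w(\ab)+w(\Jac_X)$ for every $w$ with $A_X(w)<+\infty$, and one should choose $(Y,D)$ to be a log resolution of $\cJ(X,m\cdot\ab)\cdot\Jac_X$, so that $r_{(Y,D)}(v)$ agrees with $v$ on both ideals; chaining the squeeze for $w=v$ and $w=r_{(Y,D)}(v)$ then yields $r_{(Y,D)}(v)(\ab)\geq v(\ab)-\bigl(A_X(v)+v(\Jac_X)\bigr)/m$, which is what you need.

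You are also working much harder than necessary: the paper proves the proposition by reducing to the smooth case. Take a resolution of singularities $\pi:Y\to X$ and set $\fa_m^Y=\fa_m\cdot\cO_Y$. The natural map $\Val_Y\to\Val_X$ is a homeomorphism, $v(\ab)=v(\ab^Y)$, and $A_X(v)=A_Y(v)+v(K_{Y/X})$ identifies the loci $\{A_X<+\infty\}$ and $\{A_Y<+\infty\}$; the smooth case is then exactly \cite[Corollary 6.3]{JM}. If you insist on a self-contained argument, the squeeze above already gives one without any retractions: on $\{A_X(v)<+\infty\}$ one has $v(\ab)=\sup_m\bigl(v(\cJ(X,m\cdot\ab))-v(\Jac_X)\bigr)/m$, a supremum of continuous functions and hence lower semicontinuous there, which combined with your upper semicontinuity gives continuity.
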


\begin{proof}
We reduce the result to the smooth case.  Take a resolution of singularities $Y\to X$ and write $\ab^Y$ for the graded sequence of ideals on $Y$ defined by $\fa_m^Y = \fa_m \cdot \cO_Y$. Thanks to \cite[Corollary 6.3]{JM}, the function $v \mapsto v(\ab^Y)$ is continuous on $\Val_Y \cap \{A_Y(v) < +\infty\}$. 

Now, note that the natural map $\Val_Y \to \Val_X$ is a homeomorphism of topological spaces and $v(\ab)= v(\ab^Y)$. Since $A_X(v) = A_Y(v) + v(K_{Y/X})$,
$A_X(v) < +\infty$ if and only if $A_Y(v)<+\infty$.  $v(\ab) = v(\ab^Y)$, the proof is complete. 
\end{proof}

Now, we recall some formalism from \cite{BFFU}. A \emph{normalizing subscheme} on $X$ is a (non-trivial) closed subscheme of $X$ containing $\Sing(X)$. If $N$ is a normalizing subscheme of $X$, we set
\[
\Val_X^N := \{ v\in \Val_X \, \vert \, v(\cI_N)=1 \}
.\]

\begin{proposition}
Let $X$ be a klt variety, $\ab$ a graded sequence of ideals on $X$, and $N$ a normalizing subscheme of $X$ such that $N$ contains the zero locus of $\fa_1$. 
\begin{enumerate}
\item The function $v\mapsto v(\ab)$ is bounded on $\Val_X^N$. 
\item For each $M\in \RR$, the set $\{A_X(v) \leq M\} \cap \Val_X^N$ is compact.
\item There exists $\epsilon >0$ such that $A_X(v) >\epsilon$ for all $v\in \Val_X^N$. 
\end{enumerate}
\end{proposition}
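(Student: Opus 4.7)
The plan is to handle each of the three conclusions in turn. Parts (1) and (3) should follow fairly directly from the definitions together with the klt hypothesis, while part (2) reduces to a known compactness result in the theory of valuation spaces.

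For (1), the hypothesis that $N$ contains the zero locus of $\fa_1$ gives the set-theoretic containment $V(\fa_1) \subseteq V(\cI_N)$, hence $\sqrt{\cI_N} \subseteq \sqrt{\fa_1}$, and by the Nullstellensatz there exists $r \in \NN$ with $\cI_N^{r} \subseteq \fa_1$. For any $v \in \Val_X^N$ this yields the estimate
\[
v(\ab) \;=\; \inf_{m} \frac{v(\fa_m)}{m} \;\leq\; v(\fa_1) \;\leq\; v(\cI_N^{r}) \;=\; r \cdot v(\cI_N) \;=\; r,
\]
and the lower bound $v(\ab) \geq 0$ is automatic since each $\fa_m \subseteq \cO_X$. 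Thus $v \mapsto v(\ab)$ is bounded on $\Val_X^N$.

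For (2), the plan is to invoke the compactness of $\{v \in \Val_X : v(\cI_N) \geq 1\}$, proved in \cite[Theorem 5.1]{JM} in the smooth case and extended to the normal (in particular klt) setting in \cite{BFFU}. Note that $\Val_X^N = \{v : v(\cI_N) = 1\}$ is closed in this compact set (continuity of $v \mapsto v(\cI_N)$), and $\{A_X \leq M\}$ is closed in $\Val_X$ by the lower semicontinuity of $A_X$ \cite[Theorem 3.1]{BFFU}. Hence $\{A_X \leq M\} \cap \Val_X^N$ is a closed subset of a compact space, and is therefore compact.

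For (3), let $K := \{v \in \Val_X^N : A_X(v) \leq 1\}$, which is compact by (2). Since $X$ is klt we have $A_X(v) > 0$ for every $v \in \Val_X$, and $A_X$ is lower semicontinuous, so $A_X$ attains a positive minimum $\epsilon_0 > 0$ on $K$ (if $K$ is empty, simply take $\epsilon = 1/2$). Set $\epsilon := \min(\epsilon_0, 1)/2$: on $K$ we have $A_X(v) \geq \epsilon_0 > \epsilon$, while on the complement $\Val_X^N \setminus K$ we have $A_X(v) > 1 > \epsilon$ by definition of $K$. The main obstacle is part (2), which depends on the non-trivial compactness theorem for $\Val_X$ in the singular setting from \cite{BFFU}; once this is granted, parts (1) and (3) are essentially formal.
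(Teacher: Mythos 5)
Your parts (1) and (3) are essentially fine. Part (1) is a correct elementary argument (the inclusion $\cI_N^r\subseteq\fa_1$ gives $v(\ab)\le v(\fa_1)\le r\,v(\cI_N)=r$), where the paper simply cites \cite[Proposition 2.5]{BFFU}. Part (3) is a clean formal deduction from (2) using lower semicontinuity of $A_X$ on the compact set $\{A_X\le 1\}\cap\Val_X^N$; this is genuinely different from, and arguably simpler than, the paper's argument, which uses the retraction $r_\pi^N:\Val_X^N\to\Delta_\pi^N$ onto a dual complex together with $A_X(v)\ge A_X(r_\pi^N(v))$. The problem is part (2), on which your (3) depends.

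The set $\{v\in\Val_X : v(\cI_N)\ge 1\}$ is not compact, so the result you invoke is not available: the bound $A_X(v)\le M$ is an essential ingredient of the compactness, not merely a closed condition to be imposed afterwards. Concretely, take $X=\Af^2$ and $N=\{0\}$, and let $v_t$ be the monomial valuation with $v_t(x)=1$, $v_t(y)=t$ for $t\ge 1$. Then $v_t(\cI_N)=\min(1,t)=1$ for all $t$, but $v_t((y))=t\to+\infty$, so no subnet of $(v_t)$ converges in $\Val_X$: the limit in the product topology $\prod_{\fa}[0,+\infty]$ is a semivaluation taking the value $+\infty$ on the nonzero ideal $(y)$, which is excluded from $\Val_X$ since its elements are real-valued on $K(X)^\times$. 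Note that $A_X(v_t)=1+t\to\infty$, which is exactly the degeneration that the hypothesis $A_X(v)\le M$ forbids. The statement you actually need is item (2) as stated, which is \cite[Theorem 3.1]{BFFU} (with the smooth-case antecedent in \cite{JM}); citing that directly — as the paper does — repairs the argument, and your deduction of (3) then goes through.
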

\begin{proof}
Statements (1) and (2) appear in \cite[Proposition 2.5]{BFFU}  and \cite[Theorem 3.1]{BFFU}, respectively. 
For (3), we use some formalism from \cite[Section 2]{BFFU}. Let $\pi:Y \to X$ be a good resolution of $N$ and 
\[r_\pi^N : \Val_X^N \to \Delta_\pi^N
\]
the continuous retraction map. Since $X$ is klt, it is clear that there exists $\epsilon>0$ such that $A_X(v)>\epsilon$ for all $v\in \Delta_\pi^N$. Now, for $v\in \Val_X^N$, we have $A_X(v) \geq A_X(r_\pi^N(v))$ and the proof is complete. 
\end{proof}

\begin{proof}[ Proof of Theorem \ref{computingtheorem}]
Let $N$ be the subscheme of $X$ defined by the ideal $\cI_{\Sing(X)} \cdot \fa_1$. Note that $N$ is a normalizing subscheme of $X$ and $N$ contains the zero locus of $\fa_1$. By the previous proposition, we may choose $B\in \RR$ and $\epsilon>0$ so that $v(\ab) < B$ and $A_X(v)>\epsilon$ for all $v\in \Val_X^N$. 

Note that
\[\lct(\ab) = \inf_{v\in \Val_X^N} \frac{A_X(v)}{v(\ab)}.\]
Indeed, consider $v\in \Val_X$ such that $A_X(v)/v(\ab)< +\infty$. 
Since $v(\ab)>0$, then $v(\fa_1)>0$ and, thus, $v(\cI_N) >0$. We see $w = (1/v(\cI_N))v \in \Val_X^N$ and $A_X(w)/w(\ab) = A_X(v) /v(\ab)$. 

Now, fix $L> \lct(\ab)$. If $v\in \Val_X^N$ and $A_X(v)/v(\ab) \leq L$, then 
\[
\epsilon <A_X(v) \leq L v(\ab) \leq L \cdot B .\]
Therefore, 
\[
\lct(\ab)  = \inf_{v \in W } \frac{A_X(v)}{v(\ab)}
,\]
where 
\[W = \Val_X^N \cap \{ A_X(v) \leq L \cdot B \} \cap \{ L v(\ab) \geq \epsilon  \}.\]
We claim that $W$ is compact.  Indeed, 
$\Val_X^N \cap \{ A_X(v) \leq L \cdot B \} $ is compact by the previous proposition. Since $v \mapsto v(\ab)$ is continuous on $\Val_X^N \cap \{ A_X(v) \leq L \cdot B \} $,  $W$ is closed in $\Val_X^N \cap \{A_X(v) \leq L \cdot B\}$, and, thus, compact as well. Since $v \mapsto A_X(v) /v(\ab)$ is lower semicontinuous on the compact set $W$, there exists $v^\ast \in W$ such that $A_X(v^\ast)/v^\ast(\ab)= \lct(\ab)$. 
\end{proof}

\end{appendix}

\end{document}